\newtheorem{theorem}{Theorem}[section]
\newtheorem{corollary}[theorem]{Corollary}
\newtheorem{lemma}[theorem]{Lemma}
\newtheorem{proposition}[theorem]{Proposition}
\theoremstyle{definition}
\newtheorem{definition}[theorem]{Definition}
\newtheorem{remark}{Remark}
\def\GGG{} 
\def\nc{\normalcolor}
\newcommand{\RCD}{\mathrm{RCD}}
\def\erre{\mathbb{R}}
\def\cP{\mathcal{P}}
\def\eps{\varepsilon}
\def\tD{\textnormal{D}}
\def\gm{\mathfrak{m}}
\def\epy{\KL}
\def\tnLip{\textnormal{Lip}}
\def\sfP{\mathsf{P}}
\def\sfP{\mathsf{P}}
\def\mcD{\mathcal{D}}
\def\hed{\mathsf{He}}
\def\dd{\textnormal{d}}
\def\beq{\begin{equation}}
\def\eeq{\end{equation}}
\def\to{\rightarrow}
\def\norm #1{\left\|#1\right\|}
\def\abs #1{\left\vert#1\right\vert}
\newcommand{\HK}{\mathsf{H\kern-3pt K}}
\newcommand{\KL}{\mathsf{K\kern-1.5pt L}}
\newcommand{\sfd}{\mathsf d}
\newcommand{\sfW}{\mathsf W}
\newcommand{\calD}{\mathcal D}
\newcommand{\calM}{\mathcal M}
\newcommand{\calP}{\mathcal P}
\newcommand{\rmW}{\mathrm W}
\newcommand{\rmC}{\mathrm C}
\newcommand{\ggamma}{\boldsymbol \gamma}
\title[Heat flows in metric measure spaces] 
      {Contraction and regularizing properties
		 of heat flows in metric measure spaces}
\author[Giulia Luise and Giuseppe Savar\'e]{}
\subjclass{Primary: 49Q20, 47D07; Secondary: 30L99.}
\keywords{Heat flows, contraction, Hellinger distance, Wasserstein
  distance, curvature bounds, $\RCD(K,\infty)$ spaces.}
 \email{g.luise.16@ucl.ac.uk}
 \email{giuseppe.savare@unipv.it}
\thanks{The second author is partially supported by PRIN2015 grant from MIUR for
  the project
  \emph{Calculus of Variations} and by IMATI-CNR}
\begin{document}
\maketitle

\centerline{\scshape Giulia Luise}
\medskip
{\footnotesize
  \centerline{Department of Computer Science, University College London}
  \centerline{Gower Street, London WC1E 6BT, UK}
} 

\medskip

\centerline{\scshape Giuseppe Savar\'e $^*$}
\medskip
{\footnotesize
 \centerline{Dipartimento di Matematica `Felice Casorati', University of Pavia}
   \centerline{Via Ferrata 1, 27100 Pavia, Italy}
}
\medskip
\centerline{\emph{Dedicated to Alexander Mielke on the occasion of his
  60th birthday}}

\bigskip


\begin{abstract}\noindent  We illustrate some novel contraction and
  regularizing properties of the Heat flow in metric-measure spaces
  that emphasize an interplay between Hellinger-Kakutani,
  Kantorovich-Wasserstein
  and Hellinger-Kantorvich distances.
  Contraction properties of
  Hellinger-Kakutani distances and general Csisz\'ar
  divergences hold in arbitrary metric-measure spaces and do not
  require assumptions on the linearity of the flow.
  
  When weaker transport distances are involved, we will
  show that contraction and regularizing effects rely on the dual
  formulations of the distances and are strictly related to lower
  Ricci curvature bounds 
  in the setting of $\RCD(K,\infty)$ metric measure spaces.  
\end{abstract}



\thispagestyle{empty}
\tableofcontents

\section{Introduction}
The study of contraction properties of $L^p$ norms and
more general convex entropy
functionals with
respect to the action 
of Markov semigroups is a very classic subject
(see e.g.~\cite{Bakry-Gentil-Ledoux14}).
More recently,
the role of the
Kantorovich-Rubinstein-Wasserstein metric $\sfW_2$
for second order diffusion equations 
in the space of
probability measures
has been deeply investigated, starting from the pioneering contribution by F.~Otto
\cite{Otto01}.
Many investigations have clarified the relations between
analytic estimates depending on the structure of the generating
differential operator
and geometric properties of the underlying spaces,
with an increasing level of generality.
An incomplete list of contributions includes the contraction of a general
class of evolution equations combining diffusion, interaction and drift
\cite{Carrillo-McCann-Villani06}, the gradient-flow structure and the
geodesic convexity in Euclidean spaces
\cite{JordanKinderlehrerOtto98,Otto01,AGS08},
the Heat flow in Riemannian manifolds and the Ricci curvature
\cite{Otto-Villani00,Otto-Westdickenberg05,Sturm-VonRenesse05,Daneri-Savare08,Erbar10,Villani09},
Hilbert geometry \cite{Ambrosio-Savare-Zambotti09},
the duality with gradient estimates and the Alexandrov spaces
\cite{Kuwada10,Gigli-Kuwada-Ohta13},
the RCD metric measure spaces and
the Bakry-\'Emery condition
\cite{AGS14I,AGS14D,AGS15,BGL15,Erbar-Kuwada-Sturm15,AMS15preprint}.

In one of the most general formulations, we will deal with 
a metric-measure space $(X,\sfd,\gm)$
given by a complete and separable metric space $(X,\sfd)$
endowed with a Borel positive measure $\gm$
with full support
satisfying the growth condition
  \begin{equation}
    \exists o\in X,\ \kappa\ge 0:\quad
    \gm(\{ x: \,\,\mathsf{d}(x,o)<r
    \}) \le \textnormal{e}^{\kappa r^2}.
    \label{eq:80}
\end{equation}
We introduce the Cheeger energy functional
$\mathsf{Ch}:L^2(X,\gm)\to [0,+\infty]$ 
\begin{equation}
  \label{eq:54}
  \mathsf{Ch}(f):=\inf\Big\{
  \liminf_{n\to\infty}\frac 12\int_X |\mathrm D f_n|^2\,\dd\gm,\
  f_n\in 
  \tnLip_b(X),\ f_n\to f\text{ in }L^2(X,\gm)\Big\}
\end{equation}
where
\begin{equation}
  \label{eq:55}
  |\mathrm D f|(x):=\limsup_{y\to
    x}\frac{|f(y)-f(x)|}{\sfd(x,y)};\quad
  |\mathrm D f|(x):=0\text{ if $x$ is isolated.}
\end{equation}
$\mathsf{Ch}$ is a convex, $2$-homogeneous and lower semicontinuous
functional 
whose proper domain $\calD(\mathsf{Ch})=
\{f\in L^2(X,\gm):\mathsf{Ch}(f)<\infty\}$
provides one of the equivalent characterization of the
metric Sobolev space $\rmW^{1,2}(X,\sfd,\gm)$
(see also
\cite{Heinonen-Koskela98,Koskela-MacManus98,Shanmugalingam00,Bjorn-Bjorn11,Heinonen-Koskela-Shanmugalingam-Tyson15}). A
local
weak gradient $|\mathrm D f|_w\in L^2(X,\gm)$ can be associated to
each function $f\in \rmW^{1,2}(X,\sfd,\gm)$ so that
the Cheeger energy admits the integral representation
\begin{displaymath}
  \mathsf{Ch}(f)=\frac 12\int_X |\mathrm D f|_w^2(x)\,\dd\gm(x).
\end{displaymath}
The $L^2$ subdifferential of $\mathsf {Ch}$ (whose
minimal selection will be denoted by $-\Delta$)
generates  a continuous
semigroup of order preserving contractions $(\sfP_t)_{t\ge0}$
in $L^2(X,\gm)$, which is canonically attached to the
metric-measure structure $(X,\sfd,\gm)$.

Even if in general the operators $\sfP_t$ are not linear, one can prove
\cite{AGS14I} that the semigroup is contractive with respect to all the $L^p$
norms, $p\in [1,+\infty]$,
\begin{equation}
  \label{eq:58}
  \|\sfP_t f-\sfP_t g\|_{L^p(X,\gm)}\le \|f-g\|_{L^p(X,\gm)}\quad
  \text{for every $f,g\in L^2\cap L^p(X,\gm)$},
\end{equation}
and all the integral functionals with convex integrand $\phi:\mathbb
R\to[0,+\infty)$
\begin{equation}
  \label{eq:59}
  \int_X \phi(\sfP_t f)\,\dd\gm\le \int_X \phi(f)\,\dd\gm\quad
  \text{for every }f\in L^2(X,\gm).
\end{equation}
A first important result we will prove in Section
\ref{sec:contraction} is the extension of \eqref{eq:58}-\eqref{eq:59}
to
arbitrary convex integral
functionals 
on evolving pairs:
\begin{equation}
  \label{eq:60}
  \int_X E(\sfP_t f,\sfP_t g)\,\dd\gm\le \int_X E(f,g)\,\dd\gm\quad
  \text{for every $f,g\in L^2(X,\gm)$},
\end{equation}
whenever $E:\erre^2\to [0,+\infty]$ is a lower semicontinuous convex
integrand with $E(0,0)=0$.
As a byproduct, we obtain that the action of
$(\sfP_t)_{t\ge0}$ on
nonnegative functions $f,g\in L^1(X,\gm)$
is a contraction
with respect to arbitrary Csisz\'ar divergences
(see \cite{Csiszar67,Liese-Vajda06} and Section \ref{sec:preliminaries}),
such as
the Kullback-Leibler entropy functional
\cite{Kullback-Leibler51}
associated to $E(r,s)=r\ln (r/s)-r+s$ if $r,s>0$, yielding
(since $\sfP_t$ is mass preserving)
\begin{displaymath}
  \int_{\sfP_t g>0} \ln\Big(\sfP_t f/\sfP_t g\Big)\sfP_t f\,\dd\gm\le
  \int_{g>0} \ln\Big(f/g\Big) f\,\dd\gm,
\end{displaymath}
or the Hellinger-Kakutani distances
\cite{Hellinger09,Kakutani48}
\begin{displaymath}
  \int_X |(\sfP_t f)^{1/p}-(\sfP_t g)^{1/p}|^p\,\dd\gm\le
  \int_X |f^{1/p}-g^{1/p}|^p\,\dd\gm\quad
  p\in [1,+\infty),
\end{displaymath}
associated to $E(r,s)=|r^{1/p}-s^{1/p}|^p$, $r,s\ge0$

The most relevant connections with optimal transport metrics
occur when $\mathsf{Ch}$ is also a quadratic form, i.e.~it satisfies the
parallelogram rule
\begin{equation}
  \label{eq:63}
  \mathsf{Ch}(f+g)+
  \mathsf{Ch}(f-g)=2
  \mathsf{Ch}(f)+2   \mathsf{Ch}(g),\quad
  \text{for every }f,g\in \calD(\mathsf{Ch}).
\end{equation}
In this case
$-\Delta$ is a linear positive selfadjoint operator in $L^2(X,\gm)$
and $(\mathsf P_t)_{t\ge0}$ is a linear Markov semigroup associated to a strongly local symmetric Dirichlet form
$\mathcal E$ on $L^2(X,\mathfrak m)$,
admitting Carr\'e du Champ
$\Gamma:\calD(\mathsf {Ch})\times \calD(\mathsf{Ch})\to L^1(X,\gm)$
which provides a bilinear extension of the weak gradient, since
\begin{displaymath}
  \Gamma(f,f)=|\mathrm D f|^2_w\quad \text{for every }f\in \rmW^{1,2}(X,\sfd,\gm).
\end{displaymath}
If every bounded function $f\in \rmW^{1,2}(X,\sfd,\gm)$ 
with $|\mathrm Df|_w\le 1$ $\mathfrak m$-a.e.~admits a $\mathsf
d$-continuous representative
(still denoted by $f$) which satisfies the $1$-Lipschitz condition
\begin{displaymath}
  |f(y)-f(x)|\le \sfd(x,y)\quad
  \text{for every }x,y\in X
\end{displaymath}
then $\Delta$ satisfies (a suitable weak formulation of) the
Bakry-\'Emery condition $\mathrm{BE}(K,\infty)$, $K\in \mathbb R,$
\begin{equation}
  \label{eq:savare3}
  \Gamma_2(f)=\frac 12 \Delta\Gamma(f,f)-\Gamma(f,\Delta f)\ge
  K\,\Gamma(f)
\end{equation}
if and only
if
$(\mathsf P_t)_{t\ge0}$ admits a (unique) extension
$(\sfP_t^*)_{t\ge0}$  to the space of finite
Borel measures $\mathcal M(X)$
and satisfies  the contraction property (see \cite{AGS15})
\begin{equation}
  \label{eq:64}
  \sfW_2(\mathsf P_t^* \mu_0,\mathsf P_t^*\mu_1)\le \mathrm
  e^{-Kt}\sfW_2(\mu_0,\mu_1)\quad
  \text{for every }\mu_0,\mu_1\in \mathcal P_2(X);
\end{equation}
here $\sfW_2$ denotes the $2$-Kantorovich-Wasserstein distance
between probability measures of $\mathcal P_2(X)$
with finite quadratic moments
\begin{displaymath}
  \begin{aligned}
    \sfW_2^2(\mu_0,\mu_1):=\min\Big\{&\int_{X\times
      X}\mathsf{d}^{2}(x_0,x_1)\,\dd \boldsymbol{\mu}(x_0,x_1):
    \boldsymbol{\mu}\in\mathcal P(X\times X),\\
    &\ \pi^0_\sharp     \boldsymbol{\mu}=\mu_0,\ \pi^1_\sharp
        \boldsymbol{\mu}=\mu_1\Big\},\quad
        \pi^i(x_0,x_1):=x_i,\ i=0,1.
  \end{aligned}
\end{displaymath}
In fact, this property is deeply related with the synthetic
theory of $\mathsf{CD}(K,\infty)$ metric-measure spaces with Ricci curvature
bounded from below developed by Lott-Villani \cite{Lott-Villani09} and
Sturm
\cite{Sturm06I}. The combination of the Lott-Sturm-Villani condition
$\mathsf{CD}(K,\infty)$ with the
quadratic property of the Cheeger energy \eqref{eq:63}
provides one of the equivalent characterizations of the so-called
$\RCD(K,\infty)$ metric-measure space \cite{AGS14D}, which
turned out to be equivalent with the Bakry-\'Emery
functional-analytic approach
we have adopted here \cite{AGS15}.

The link between \eqref{eq:savare3} and \eqref{eq:64} becomes more
apparent if we consider that \eqref{eq:savare3} is in fact equivalent
to the Bakry-\'Emery commutation estimate 
\begin{equation}
  \label{eq:48}
  |\mathrm D\sfP_t f|^2 \le \mathrm e^{-2K t}
  \sfP_t\big(|\mathrm D f|^2\big)
  \quad \text{for every }f\in \tnLip_b(X),
\end{equation}
combined with
the duality formula expressing the distance $\sfW_2$ in terms of regular
subsolutions 
$\zeta\in \mathrm C^1([0,1];\mathrm{Lip}_b(X))$ 
to the Hamilton-Jacobi equation \cite{Otto-Villani00,AGS14I,AES16}
\begin{equation}
  \label{eq:69}
  \frac 12 \sfW_2^2(\mu_0,\mu_1)=
  \sup\Big\{
  \int_X\zeta_1\,\mathrm d\mu_1-
  \int_X\zeta_0\,\mathrm d\mu_0: \partial_t \zeta_t+\frac 12|\mathrm D\zeta_t|^2\le 0\Big\},
\end{equation}
thanks to the dual representation formula for $\sfP_t^*$:
\begin{equation}
  \label{eq:51}
  \int_X f\,\dd(\sfP_t^* \mu)=
  \int_X \sfP_t f\,\dd \mu\quad
  \text{for every }f\in \mathrm C_b(X),\ \mu\in \mathcal M(X).
\end{equation}
\eqref{eq:48} shows in fact that $(\sfP_t)_{t\ge0}$ preserves
(up to an exponential factor)
subsolutions to the Hamilton-Jacobi equation \eqref{eq:69}.

In Section \ref{sec:regularization} we improve \eqref{eq:64} 
in two directions.
First of all, we will show that
after a strictly positive time $\sfP_t$ exhibits a regularizing
effect, providing a control of the stronger $2$-Hellinger distance
\begin{displaymath}
  \hed_2^2(\mu_0,\mu_1):=\int_X \Big(\sqrt\varrho_1-\sqrt
  \varrho_0\Big)^2\,\mathrm d\mu,\quad 
  \mu_i=\varrho_i\mu,
\end{displaymath}
in terms of the weaker Wasserstein distance between the
initial measures:
\begin{equation}
  \label{eq:65}
  \hed_2(\sfP_t^*\mu_0,\sfP_t^*\mu_1)\le
  \frac{1}{2\sqrt{R_K(t)}}\sfW_2(\mu_0,\mu_1)
  \quad\text{for every }\mu_0,\mu_1\in \mathcal P_2(X)
\end{equation}
where
\begin{equation}
  \label{eq:66}
  R_K(t)
  :=\begin{cases}
\displaystyle
\frac{\textnormal{e}^{2Kt}-1}{K}\qquad&\text{if }K\neq 0\\
2t &\text{if }K=0.
\end{cases}
\end{equation}
Notice that when $\gm\in \mathcal P_2(X)$ and $K\ge0$
we obtain the asymptotic estimate
\begin{displaymath}
  \hed_2(\sfP_t^*\mu_0,\gm)\le
  \frac{1}{2\sqrt{R_K(t)}}\sfW_2(\mu_0,\gm),
\end{displaymath}
proving in particular Hellinger convergence of $\sfP_t \mu_0$
to $\gm$ as $t\to\infty$, with exponential rate if $K>0$.

A second and more refined estimate involves the recently introduced
family of Hellinger-Kantorovich distances $\HK_{\alpha}$,
$\alpha>0$,
\cite{CPSV18,CPSV18bis,KMV16,LMS16,LMS18}, 
which can be defined in terms of an
Optimal Entropy--Transport problem
\cite{LMS16,LMS18}
\begin{displaymath}
  \HK_{\alpha}^2(\mu_0,\mu_1):=
  \min_{\boldsymbol\gamma\in \mathcal M(X\times X)}
  \KL(\gamma_0|\mu_0)+
  \KL(\gamma_1|\mu_1)+
  \int_{X\times X} \ell_{\alpha}(\sfd(x_0,x_1))\,\mathrm d\boldsymbol \gamma,
\end{displaymath}
where $\gamma_0,\gamma_1$ are the marginals of $\boldsymbol \gamma$, 
$\KL$ is the Kullback-Leibler divergence
\begin{displaymath}
  \KL(\gamma|\mu):=\int_X
  \Big(\varrho\log\varrho-\varrho+1\Big)\,\mathrm d\mu,\quad 
  \gamma=\varrho\mu\ll\mu,
\end{displaymath}
and $\ell_{\alpha}$ is the cost function
\begin{equation}\label{eq:76}
  \ell_{\alpha}(r):=
  \begin{cases}
    \log\Big(1+\tan^2\big(r/\sqrt\alpha\big)&\text{if }\mathsf
    d(x_0,x_1)<\sqrt{\alpha}\pi/2,\\
    +\infty&\text{otherwise}.
  \end{cases}
\end{equation}
It turns out that $\HK_\alpha$ (corresponding to $\HK_{\alpha,4}$
in the more general notation of \cite{LMS16,LMS18}) admits a dual dynamic representation
formula \cite{LMS18}
\begin{displaymath}
  \HK_{\alpha}^2(\mu_0,\mu_1)=
  \sup\Big\{
  \int \zeta_1\,\mathrm d\mu_1-
  \int\zeta_0\,\mathrm d\mu_0: \partial_t \zeta_t+\frac \alpha4|\mathrm D\zeta_t|^2+\zeta_t^2\le 0\Big\},
\end{displaymath}
so that when the Bakry-\'Emery condition $\mathrm{BE}(0,\infty)$ holds one has \cite{LMS18}
  \begin{displaymath}
    \HK_{\alpha}(\mathsf P_t\mu_0,\mathsf P_t\mu_1)\le 
    \HK_{\alpha}(\mu_0,\mu_1)\quad\text{for every }\mu_0,\mu_1\in
    \mathcal M(X).
  \end{displaymath}
Actually, the stronger
Hellinger distance at time $t>0$ can be estimated in terms of the weaker Hellinger-Kantorovich
one: for every $t>0$
\begin{equation}\label{stima_imp_HK}
  \hed_2(\sfP_{t}^{\ast}\mu_0,\sfP_{t}^{\ast}\mu_1)\leq
  \HK_{\alpha(t)}(\mu_0,\mu_1)\quad
  \text{where $\alpha(t)=4R_K(t)$}.
 \end{equation}
 Differently from other well known properties, the estimates
\eqref{eq:65} and \eqref{stima_imp_HK} cannot be deduced by  a regularization effect on a
single initial datum, since $\hed_2$, $\sfW_2$ and $\HK_\alpha$ are not
translation invariant. In this respect, the dual dynamic approach plays a crucial role.

\textit{Plan of the paper.}
The paper is organized as follows: in Section 2 we will
collect a few preliminary results on Csisz\'ar divergences,
Hellinger-Kakutani, Kantorovich-Wasserstein and Hellinger-Kantorovich
metrics.

Section 3 is devoted to a short review of the main tools of calculus
in metric-measure spaces, which are used throughout the work.
A brief description of the main properties of $\RCD(K,\infty)$ metric
measures spaces is also presented.

The last two sections contain novel results.
Section 4 is dedicated to the proof of \eqref{eq:60} in general metric
measure spaces.
Section 5 discusses the regularization estimates
\eqref{eq:65} and \eqref{stima_imp_HK}.
\newpage
\section{Distances and entropies on the space of finite measures}
\label{sec:preliminaries}

\subsection{Csisz\'ar divergences/Relative entropies}
\label{sub:entropy}
  We first recall a few basic facts on convex and $1$-homoge\-neous
  functionals of positive measures. 

  Let $(\Omega,\mathscr B)$ be a measurable space. We will denote the
  space of finite
  nonnegative measures on $(\Omega,\mathscr B)$ by
  $\calM(\Omega)$. 
  If $\mu_0,\mu_1\in \calM(\Omega)$,
  we say that $\lambda\in \calM(\Omega)$ is a \emph{common
    dominating measure} if $\mu_i\ll\lambda$, $i=0,1$. 
  Such a $\lambda$ always
  exists, for instance we may take $\lambda=\mu_0+\mu_1$.
  We will also often consider the Lebesgue decomposition of $\mu_0$
  w.r.t.~$\mu_1$ given by
  \begin{equation}
    \label{eq:Lebesgue}
    \mu_0=\varrho\mu_1 + \mu_0^{\bot},
    \quad \mu_0^{\bot}\bot \,\mu_1, \quad
     \varrho:=\frac{\dd\mu_0}{\dd\mu_1}.
  \end{equation}
  We consider the class of
  Csisz\`ar density functions
 \begin{subequations}
  \begin{equation}
   \label{eq:5}
   F:[0,\infty)\to [0,+\infty]\quad\text{l.s.c.~and convex},\quad
   F(1)=0,
 \end{equation}
 with recession constant defined by
  \begin{displaymath}
   F'(\infty):=\lim_{r\to\infty}\frac{F(r)}r=\sup_{r>0}\frac{F(r)}{r-1},
 \end{displaymath}
  and the corresponding class of homogeneous perspective functions
 \begin{equation}
   \label{eq:3}
   \begin{gathered}
     H:[0,\infty)\times [0,\infty)\to [0,+\infty]\quad \text{l.s.c.,
       convex, and positively $1$-homogeneous},\\
     H(\theta r,\theta s)=\theta H(r,s),\quad
     H(r,r)=0\quad\text{for every }r,s,\theta\ge0.
   \end{gathered}
 \end{equation}
 There is a one-to-one correspondence between the two classes given by
 the formula
 \begin{equation}
   \label{eq:4}
   F(r)=H(r,1),\quad
   H(r,s)=
   \begin{cases}
     sF(r/s)&\text{if }s>0,\\
     F'(\infty)&\text{if }s=0.
   \end{cases}
 \end{equation}
 \end{subequations}
 
 \begin{definition}
   \label{def:Csiszar}
   Let $F,H$ be as in {\upshape (\ref{eq:5},b)} and
   let $\mu_0,\mu_1\in \calM(\Omega)$ with Lebesgue decomposition
   $\mu_0=\varrho\mu_1+\mu_0^\bot$ as in \eqref{eq:Lebesgue}.
   The \emph{Csiz\'ar divergence} associated with $F$
   is 
   defined as
   \begin{equation}
     \mathscr{F}(\mu_0\mid \mu_1):=
     \int_\Omega F(\varrho)\,\dd\mu_1 +F'(\infty)\mu_0^{\bot}(\Omega).
     \label{eq:9}
   \end{equation}
   The $\mathscr H$-perspective functional is defined as 
   \begin{equation}
     \label{eq:8}
     \mathscr H(\mu_0\mid\mu_1):=
     \int_\Omega H(\varrho_0,\varrho_1)\,\dd\lambda
   \end{equation}
   where $\mu_i=\varrho_i\lambda\ll\lambda$, $i=0,1$, and $\lambda$ is
   any common dominating measure.\\
   If $F$ and $H$ are related by \eqref{eq:4} then
   \begin{equation}
     \label{eq:11}
     \mathscr F(\mu_0\mid\mu_1)=\mathscr H(\mu_0\mid \mu_1)\quad
     \text{for every }\mu_0,\mu_1\in \calM(\Omega).
   \end{equation}
 \end{definition}
Notice that \eqref{eq:8} does not depend on the choice of the dominating
 measure $\lambda$, since the function $H$ 
 is positively $1$-homogeneous.
 
\eqref{eq:11} can be easily checked by observing that
$\lambda:=\mu_1+\mu_0^\bot$ is a dominating measure for the couple
$\mu_0,\mu_1$; if $B_0,B_1$ are measurable subsets of
$\Omega$ such that
\begin{displaymath}
  B_0\cap B_1=\emptyset,\quad
  \Omega=B_0\cup B_1,\quad
  \mu_0^\bot(B_1)=0,\quad
  \mu_1(B_0)=0 ,
\end{displaymath}
we can easily calculate the densities $\varrho_0,\varrho_1$ by
\begin{displaymath}
  \varrho_0(x):=
  \begin{cases}
    1&\text{if }x\in B_0\\
    \varrho(x)&\text{if }x\in B_1    
  \end{cases},
  \qquad
  \varrho_1(x):=
  \begin{cases}
    0&\text{if }x\in B_0\\
 1&\text{if }x\in B_1
\end{cases}
\end{displaymath}
so that
\begin{align*}
  \int_\Omega &H(\varrho_0,\varrho_1)\,\dd\lambda
  =
  \int_{B_0} H(\varrho_0,\varrho_1)\,\dd\lambda+
    \int_{B_1} H(\varrho_0,\varrho_1)\,\dd\lambda
    \\&=\int_{B_0} H(1,0)\,\dd\mu_0^\bot+
  \int_{B_1} H(\varrho,1)\,\dd\mu_1=
  F'(\infty)\mu_0^\bot(B_0)+
  \int_{B_1} F(\varrho)\,\dd\mu_1
  \\&=F'(\infty)\mu_0^\bot(\Omega)+
  \int_{\Omega} F(\varrho)\,\dd\mu_1=\mathscr F(\mu_0\mid\mu_1).
\end{align*}
\noindent An important class of entropy functions is provided by the power like functions which have the following explicit formulas \begin{align*}
E_p(s):=\begin{cases}\frac{1}{p(p-1)}(r^p-p(r-1)-1)\qquad &\textnormal{if } p\neq 0,1\\
  r\log r-r+1 &\textnormal{if  }p=1\\
		r-1-\log r &\textnormal{if }p=0.
		\end{cases}
                                                                                                                                     \end{align*}


\noindent For $p=1$, the entropy function $E_1(r)=r\log r-r+1$ generates
the well known Kullback-Leibler divergence, 
often referred to as \textit{relative logarithmic entropy}.
Notice that $E_1$ is superlinear, so that $E_1'(\infty)=+\infty$
and its corresponding perspective function is
\begin{equation}
  \label{eq:71}
  H_{\KL}(r_0,r_1):=
  \begin{cases}
    r_0(\ln r_0-\ln r_1)+r_1-r_0&\text{if $r_0,r_1>0$,}\\
    r_1&\text{if $r_0=0$}\\
    +\infty&\text{if $r_0>0, r_1=0$.}
  \end{cases}
\end{equation}
\begin{definition}[Kullback-Leibler divergence (relative logarithmic entropy)]
  Let $\mu_0$ and $\mu_1$ be two finite nonnegative measures.
  The logarithmic entropy of $\mu_0$ with respect to $\mu_1$ is given
  by
  the Csisz\`ar functional associated to $E_1(r):=r\log r-(r-1)$:
  \begin{subequations}
    \begin{align}\label{relative entropy}
      \epy(\mu_0\mid\mu_1)&=\begin{cases}
        \displaystyle
        \int_{\Omega}\big(\varrho\log \varrho-\varrho+1\big)\,\dd\mu_1\qquad &\text{if }\mu_0=\varrho\mu_1\\
        +\infty&\text{otherwise}.
      \end{cases}\\
      \label{eq:72}
                          &=\int_{\Omega} H_{\KL}(\varrho_0,\varrho_1)\,\dd \mu,\quad
                            \mu_i=\varrho_i\mu\ll\mu,
    \end{align}
    \end{subequations}
  \end{definition}
The functionals $\mathscr F,\mathscr H$ admit a useful dual
representation.
Let us denote by $\mathrm B_b(\Omega)$ the set of bounded Borel functions on
$\Omega$ and by
$F^*:\mathbb{R}\rightarrow (-\infty,+\infty]$
the Legendre conjugate function of $F$, given by \[ 
F^*(\phi)=\sup_{s\geq 0}\big(s\phi -F(s) \big).
\]
We introduce the closed convex subsets $\mathfrak F,\mathfrak{H}$ of $\mathbb{R}^2$
given by
\begin{align*}
    \mathfrak{F}:={}&\{(\phi,\psi)\in\mathbb{R}^2:\,\,\psi\leq -F^*(\phi)
    \}=\{(\phi,\psi)\in\mathbb{R}^2:\,\,r\phi+\psi\leq
                      F(r)\,\,\,\forall r>0 \}\\
  \mathfrak{H}:={}&
    \{(\phi,\psi)\in\mathbb{R}^2:\,\,r\phi+s\psi\leq H(r,s)\ \ \forall\, r,s>0
    \}.
\end{align*}
Since $F$
is lower semicontinuous, it can be recovered from $F^*$ and $\mathfrak{F}$ by the Fenchel-Moreau formula \cite{LMS18} \[ 
F(r)=\sup_{\phi\in \mathbb{R}}(r\phi-F^*(\phi))=\sup_{(\phi,\psi)\in\mathfrak{F}} r\phi+\psi.
\]
Similarly, we have
\begin{displaymath}
  H(r,s)=\sup_{(\phi,\psi)\in\mathfrak{H}} r\phi+s\psi,
\end{displaymath}
and
$\mathfrak F=\mathfrak H$ if \eqref{eq:4} holds.
\begin{theorem}\label{dualformula_entropy}
  For every $\mu_0,\mu_1\in\mathcal{M}(\Omega)$ we have
  \begin{align*}
    \mathscr{F}(\mu_0\mid \mu_1)&=\sup\Big\{
    \int_\Omega\phi\,\dd\mu_0+\int_\Omega\psi\,\dd\mu_1:\phi,\psi\in
                                  \mathrm B_b(\Omega), \ (\phi(x),\psi(x))\in \mathfrak{F}
                                  \,\,\,\forall x\in \Omega\Big\},\\
    \mathscr{H}(\mu_0\mid \mu_1)&=\sup\Big\{
    \int_\Omega\phi\,\dd\mu_0+\int_\Omega\psi\,\dd\mu_1:\phi,\psi\in
    \mathrm B_b(\Omega), \ (\phi(x),\psi(x))\in \mathfrak{H}
    \,\,\,\forall x\in \Omega\Big\}.
  \end{align*}
\end{theorem}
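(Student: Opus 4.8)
The plan is to prove the identity for $\mathscr H$ first and then deduce the one for $\mathscr F$: since every admissible $F$ corresponds to an $H$ through \eqref{eq:4}, in which case $\mathscr F(\mu_0\mid\mu_1)=\mathscr H(\mu_0\mid\mu_1)$ by \eqref{eq:11} while $\mathfrak F=\mathfrak H$, the two supremum expressions coincide verbatim, so a single argument suffices. Fix a common dominating measure $\lambda$ (for instance $\lambda=\mu_0+\mu_1$) and write $\mu_i=\varrho_i\lambda$ with $\varrho_0,\varrho_1\in L^1(\lambda)$ nonnegative. For any admissible pair $\phi,\psi\in\mathrm B_b(\Omega)$ with $(\phi(x),\psi(x))\in\mathfrak H$ for every $x$, the Fenchel--Moreau representation $H(r,s)=\sup_{(\phi,\psi)\in\mathfrak H}(r\phi+s\psi)$ (valid for all $r,s\ge0$) yields the pointwise bound $\varrho_0(x)\phi(x)+\varrho_1(x)\psi(x)\le H(\varrho_0(x),\varrho_1(x))$. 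Integrating against $\lambda$ and using $\int_\Omega\varrho_0\phi\,\dd\lambda=\int_\Omega\phi\,\dd\mu_0$ and the analogue for $\psi$ gives $\int_\Omega\phi\,\dd\mu_0+\int_\Omega\psi\,\dd\mu_1\le\mathscr H(\mu_0\mid\mu_1)$, so the supremum is dominated by $\mathscr H(\mu_0\mid\mu_1)$. This is the routine inequality.

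The substantial direction is the converse, where I must produce near-optimal admissible test functions. First I would extract a countable subset $\{(\phi_n,\psi_n)\}_{n\in\enne}\subset\mathfrak H$ that is dense in $\mathfrak H$; since for each fixed $(r,s)$ the map $(\phi,\psi)\mapsto r\phi+s\psi$ is continuous and $\mathfrak H$ is closed, the Fenchel--Moreau formula upgrades to $H(r,s)=\sup_{n}(r\phi_n+s\psi_n)$ simultaneously for all $r,s\ge0$. Setting $G_N(r,s):=\max_{1\le n\le N}(r\phi_n+s\psi_n)$, the functions $G_N$ are convex, increase monotonically in $N$, and converge pointwise to $H$.

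Next I would convert this into genuine test functions by a measurable selection. For each $N$ put $n(x):=\min\big\{n\in\{1,\dots,N\}:\varrho_0(x)\phi_n+\varrho_1(x)\psi_n=G_N(\varrho_0(x),\varrho_1(x))\big\}$, which is Borel because $\varrho_0,\varrho_1$ are measurable and only finitely many indices compete, the minimal-index convention breaking ties so that the level sets $\{x:n(x)=n\}$ are Borel. Defining $\phi^{(N)}:=\phi_{n(x)}$ and $\psi^{(N)}:=\psi_{n(x)}$ produces simple, hence bounded, Borel functions with $(\phi^{(N)}(x),\psi^{(N)}(x))\in\mathfrak H$ for every $x$, so each pair is admissible, and by construction $\varrho_0\phi^{(N)}+\varrho_1\psi^{(N)}=G_N(\varrho_0,\varrho_1)$ pointwise.

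Finally, I would pass to the limit. Since $G_N(\varrho_0,\varrho_1)$ increases to $H(\varrho_0,\varrho_1)\ge0$ and is bounded below by the integrable function $G_1(\varrho_0,\varrho_1)=\phi_1\varrho_0+\psi_1\varrho_1\in L^1(\lambda)$, monotone convergence gives
\[
  \int_\Omega\phi^{(N)}\,\dd\mu_0+\int_\Omega\psi^{(N)}\,\dd\mu_1
  =\int_\Omega G_N(\varrho_0,\varrho_1)\,\dd\lambda
  \xrightarrow{\;N\to\infty\;}\int_\Omega H(\varrho_0,\varrho_1)\,\dd\lambda
  =\mathscr H(\mu_0\mid\mu_1),
\]
covering also the case $\mathscr H(\mu_0\mid\mu_1)=+\infty$. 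Hence the supremum is at least $\mathscr H(\mu_0\mid\mu_1)$, which closes the argument. The main obstacle is exactly this converse step, namely reducing an a priori uncountable supremum to a measurable, bounded, and everywhere-admissible selection. Working with the $1$-homogeneous $\mathscr H$ and a single dominating measure $\lambda$ is what keeps this clean: the singular interaction between $\mu_0$ and $\mu_1$, i.e.\ the recession regime $s=0$ with $H(r,0)=rF'(\infty)$, is absorbed automatically by the homogeneity, whereas a direct attack on $\mathscr F$ would require a separate and more delicate treatment of the singular part $\mu_0^\bot$.
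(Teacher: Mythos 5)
Your argument is correct, but there is nothing in the paper to compare it against step by step: the paper's entire proof of this theorem is the citation \cite[Th.~2.7]{LMS18}, so your write-up is a self-contained reconstruction (and it in fact follows the same standard duality route as the cited source). The skeleton is sound: the reduction of the $\mathscr F$-formula to the $\mathscr H$-formula is legitimate, since \eqref{eq:4}, \eqref{eq:11} and the identity $\mathfrak F=\mathfrak H$ are all stated in the paper; the upper bound for the supremum follows by integrating the pointwise admissibility inequality against $\lambda$; and the lower bound via a countable dense family $\{(\phi_n,\psi_n)\}\subset\mathfrak H$, the finite maxima $G_N$, the minimal-index Borel selection (which indeed produces bounded, everywhere-admissible simple functions), and monotone convergence above the integrable minorant $G_1(\varrho_0,\varrho_1)$ is exactly the right mechanism, and it handles $\mathscr H(\mu_0\mid\mu_1)=+\infty$ correctly. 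What your proof buys is self-containedness; what the paper's citation buys is brevity and deferral to the source where the result is proved in its natural generality. The one point you should spell out is the boundary of the quadrant: $\mathfrak H$ is defined by the inequality $r\phi+s\psi\le H(r,s)$ for $r,s>0$ \emph{only}, yet you invoke both the admissibility bound and the representation $H(r,s)=\sup_n\big(r\phi_n+s\psi_n\big)$ at points where $\varrho_0(x)=0$ or $\varrho_1(x)=0$. The extension is true but needs a line: for $(\phi,\psi)\in\mathfrak H$ and $r>0$, convexity gives $H(r,tr)\le (1-t)H(r,0)+tH(r,r)=(1-t)H(r,0)$ for $t\in(0,1]$, so letting $t\downarrow 0$ in $r\phi+tr\psi\le H(r,tr)$ yields $r\phi\le H(r,0)$ (similarly at $(0,s)$, while $H(0,0)=0$); hence $\mathfrak H$ is unchanged if its defining inequality is imposed on the closed quadrant, and the Fenchel--Moreau identity, hence also your countable version, holds for all $r,s\ge 0$. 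With that remark inserted, the proof is complete.
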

\begin{proof}
\cite[Th. 2.7]{LMS18}
\end{proof}

\nc

\subsection{Hellinger distances}
We
consider a specific
example of perspective functionals $\mathscr H$,
which gives raise to the Hellinger distances.
\begin{definition}
For $\mu_0,\mu_1\in\calM(\Omega)$ and $p\in [1,+\infty)$ the
$p$-Hellinger distance is defined by
\begin{equation}
  \label{eq:2}
  \hed^p_p(\mu_0,\mu_1):=\Vert\varrho_0^{1/p}-\varrho_1^{1/p}\Vert^p_{L^p(\Omega,\lambda)}
  =\int_\Omega\Big|\varrho_0^{1/p}-\varrho_1^{1/p}\Big|^p\,\dd \lambda
\end{equation}
where $\mu_i=\varrho_i\lambda\ll\lambda$, $i=0,1$, and $\lambda$ is an
arbitrary dominating
measure.
\end{definition}
 Notice that
the above definition corresponds to \eqref{eq:8}, \eqref{eq:9} for the
choices
\begin{equation}
  H_p(r,s):=\left|r^{1/p}-s^{1/p}\right|^p,\quad
  F_p(r)=\left|r^{1/p}-1\right|^p.\label{eq:12}
\end{equation}
An immediate consequence of the above definition, choosing
$\lambda=\mu_0+\mu_1$  is the 
uniform bound
\begin{displaymath}
  \hed_p^p(\mu_0,\mu_1)\le \mu_0(X)+\mu_1(X).
\end{displaymath}
For $p=1$ the definition above gives the usual
total variation distance, {which we will still denote by $\hed_1$}. The total variation distance and the $L^p$-Hellinger distance $\hed_p$ induce the same topology on the space $\calM(\Omega)$ and the following relation holds. 



\nc
\begin{theorem}
 
  Let $q\in (1,\infty]$ be the conjugate exponent of $p$.
  For every $p>1$ \nc
  and arbitrary nonnegative finite measures $\mu_0$ and $\mu_1$ in $\calM(\Omega)$, 
\begin{equation}\label{hell_tot_var}
  \hed_p^p(\mu_0,\mu_1)\leq \hed_1(\mu_0,\mu_1) \leq
   c_p\nc \big( \mu_0(\Omega)^{1/q}+\mu_1(\Omega)^{1/q}\big) \hed_p(\mu_0,\mu_1),
\end{equation}
 where $c_p:=\max (p/2,1)$.\nc
\end{theorem}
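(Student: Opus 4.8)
The plan is to reduce both inequalities to pointwise estimates on the densities and integrate. I would fix a common dominating measure, say $\lambda:=\mu_0+\mu_1$, write $\mu_i=\varrho_i\lambda$, and set $a:=\varrho_0^{1/p}$, $b:=\varrho_1^{1/p}$, both nonnegative and in $L^p(\Omega,\lambda)$. With this notation $\hed_p^p(\mu_0,\mu_1)=\int_\Omega|a-b|^p\,\dd\lambda$, while $\hed_1(\mu_0,\mu_1)=\int_\Omega|\varrho_0-\varrho_1|\,\dd\lambda=\int_\Omega|a^p-b^p|\,\dd\lambda$ and $\mu_0(\Omega)=\int_\Omega a^p\,\dd\lambda$, $\mu_1(\Omega)=\int_\Omega b^p\,\dd\lambda$. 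Since both sides of \eqref{hell_tot_var} are expressed through $a,b$, it suffices to prove two pointwise inequalities for reals $a\ge b\ge0$ and then integrate.

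For the left inequality I would establish $|a-b|^p\le|a^p-b^p|$ for all $a,b\ge0$, $p\ge1$. Assuming $a\ge b$ and $a>0$, this is equivalent to $(a-b)^p+b^p\le a^p$, i.e.\ to superadditivity of $s\mapsto s^p$ on $[0,\infty)$. This follows immediately by applying $t^p\le t$ for $t\in[0,1]$ to $t=(a-b)/a$ and $t=b/a$ and summing, after multiplying by $a^p$. Integrating the pointwise bound over $\Omega$ gives $\hed_p^p(\mu_0,\mu_1)=\int_\Omega|a-b|^p\,\dd\lambda\le\int_\Omega|a^p-b^p|\,\dd\lambda=\hed_1(\mu_0,\mu_1)$.

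For the right inequality the core is the pointwise estimate $|a^p-b^p|\le c_p\,(a^{p-1}+b^{p-1})\,|a-b|$ with $c_p=\max(p/2,1)$, which I would prove in two regimes. For $1\le p\le2$, assuming $a\ge b\ge0$, expanding the right-hand side (with constant $1$) reduces the claim to $ab\,(b^{p-2}-a^{p-2})\ge0$, which holds because $s\mapsto s^{p-2}$ is nonincreasing. For $p\ge2$, I would write $a^p-b^p=\int_b^a p\,s^{p-1}\,\dd s$ and use that $s\mapsto p\,s^{p-1}$ is convex (since $p-1\ge1$), so the trapezoidal over-estimate $\int_b^a g\le\frac{a-b}{2}\bigl(g(a)+g(b)\bigr)$ yields $a^p-b^p\le\frac p2\,(a^{p-1}+b^{p-1})(a-b)$, i.e.\ $c_p=p/2$. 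Integrating this bound, then applying H\"older with the conjugate exponents $q$ and $p$, gives
\[
  \int_\Omega|a^p-b^p|\,\dd\lambda
  \le c_p\,\bigl\|a^{p-1}+b^{p-1}\bigr\|_{L^q(\Omega,\lambda)}\,\bigl\|a-b\bigr\|_{L^p(\Omega,\lambda)}.
\]
Since $\|a-b\|_{L^p(\Omega,\lambda)}=\hed_p(\mu_0,\mu_1)$, and using Minkowski together with the identity $(p-1)q=p$, one has $\|a^{p-1}\|_{L^q}=(\int_\Omega a^p\,\dd\lambda)^{1/q}=\mu_0(\Omega)^{1/q}$ and likewise $\|b^{p-1}\|_{L^q}=\mu_1(\Omega)^{1/q}$, which produces the stated factor $c_p\bigl(\mu_0(\Omega)^{1/q}+\mu_1(\Omega)^{1/q}\bigr)$.

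The main obstacle is pinning down the sharp constant $c_p=\max(p/2,1)$, and in particular recognizing that the natural tool changes at $p=2$: for $p\ge2$ one exploits the \emph{convexity} of $s\mapsto s^{p-1}$ through the trapezoidal over-estimate, whereas for $p\le2$ the opposite monotonicity of $s\mapsto s^{p-2}$ makes a direct algebraic argument work with constant $1$; the two bounds agree at $c_2=1$, so the unified constant $\max(p/2,1)$ is exactly what the theorem asserts. Everything else (the reduction to pointwise inequalities, the H\"older and Minkowski steps) is routine once the correct pointwise estimates are in place.
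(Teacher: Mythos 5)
Your proof is correct and follows essentially the same route as the paper: the same reduction to the pointwise inequalities $|a-b|^p\le|a^p-b^p|$ and $|a^p-b^p|\le c_p\,(a^{p-1}+b^{p-1})|a-b|$ on the densities, followed by the identical H\"older--Minkowski step using $(p-1)q=p$. The only difference is cosmetic: for $p\le 2$ you verify the key pointwise bound by direct expansion (monotonicity of $s\mapsto s^{p-2}$) where the paper integrates the subadditivity estimate $((1-t)a+tb)^{p-1}\le(1-t)^{p-1}a^{p-1}+t^{p-1}b^{p-1}$, and your trapezoidal over-estimate for $p\ge 2$ is exactly the paper's convexity argument after the change of variables $s=(1-t)a+tb$.
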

\begin{proof}
  {The first part  of \eqref{hell_tot_var} follows immediately by the
  representation \eqref{eq:2} and the elementary inequality}
  \[ 
    \left|a^{1/p}-b^{1/p}\right|^p\leq \abs{a-b}\quad
    \text{for every }a,\,b\geq 0.
\]

\noindent The second inequality of \eqref{hell_tot_var} is a
consequence of
\begin{equation}
  \left|a^p-b^p\right|\leq c_p\,\abs{a-b}(a^{p-1}+b^{p-1}),\quad
  a,b\geq 0,
\label{eq:10}
\end{equation}
 which can be easily obtained by integration (without loss of
generality we can assume $a\le b$)
\begin{displaymath}
  b^p-a^p=p(b-a)\int_0^1 ((1-t) a+tb)^{p-1}\,\dd t=p(b-a)I
\end{displaymath}
where
\begin{displaymath}
  I=\int_0^1 ((1-t) a+tb)^{p-1}\,\dd t\le
  \begin{cases}
    \int_0^1(1-t)a^{p-1}+tb^{p-1}\,\dd t=\frac
    12(a^{p-1}+b^{p-1})&\text{if }p\ge 2,\\
    \int_0^1 \Big( (1-t)^{p-1}a^{p-1}+t^{p-1}b^{p-1}\Big)\,\dd t\le \frac
    1{p}(a^{p-1}+b^{p-1})
    &\text{if }p\le 2.
  \end{cases}
\end{displaymath}
\eqref{eq:10}
with the choices $a=\varrho_0^{1/p}$ and $b=\varrho_1^{1/p}$, combined with H\"older
inequality, yields
\begin{align*}
  \hed_1(\mu_0,\mu_1)&=\int_{\Omega} \abs{\varrho_0-\varrho_1}
                       \,\dd\lambda\leq
                       c_p\int_{\Omega}\abs{\big( \varrho_0^{1/p}-\varrho_1^{1/p}\big) \big(\varrho_0^{1/q}+\varrho_1^{1/q} \big)}\,\dd\lambda\\
                     &\leq c_p\|
                       \varrho_0^{1/p}-\varrho_1^{1/p}\|_{L^p(\Omega,\lambda)}
                       \|\varrho_0^{1/q}+\varrho_1^{1/q}\|_{L^q(\Omega,\lambda)}
                       \\&\leq c_p\|
                       \varrho_0^{1/p}-\varrho_1^{1/p}\|_{L^p(\Omega,\lambda)}
                        \big( \|\varrho_0^{1/q}\|_{L^q(\Omega,\lambda)}+\|\varrho_1^{1/q}\|_{L^q(\Omega,\lambda)}\big)
  \\&= c_p \hed_p(\mu_0,\mu_1)(\mu_0(\Omega)^{1/q}+\mu_1(\Omega)^{1/q}).
\end{align*}
\end{proof}
%
An interesting characterization of $\hed_2$ in terms of $\KL$ is
provided by the following property \cite{LMS18}:
\begin{proposition}
  For any two  measures $\mu_0$ and $\mu_1$ in $\calM(\Omega)$
  \begin{equation}
    \label{eq:70}
    \hed_2^2(\mu_0,\mu_1)=
    \min_{\mu\in \calM(\Omega)}\KL(\mu,\mu_0)+\KL(\mu,\mu_1).
  \end{equation}
  In particular
  \begin{equation}
   \hed_2^2(\mu_0,\mu_1)\leq \epy(\mu_0\mid \mu_1).\label{eq:83}
 \end{equation}
\end{proposition}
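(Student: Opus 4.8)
The plan is to recognize the right-hand side of \eqref{eq:70} as an integral functional and to minimize it pointwise. First I would fix a common dominating measure $\lambda$ for the three measures involved — for the lower bound one may simply take $\lambda=\mu_0+\mu_1+\mu$, so that all of them are absolutely continuous with respect to $\lambda$ and no singular terms survive — and write $\mu=\sigma\lambda$, $\mu_i=\varrho_i\lambda$. By the perspective representation \eqref{eq:72} of the Kullback--Leibler divergence, which does not depend on the dominating measure,
\begin{equation*}
  \KL(\mu\mid\mu_0)+\KL(\mu\mid\mu_1)=\int_\Omega\Big(H_{\KL}(\sigma,\varrho_0)+H_{\KL}(\sigma,\varrho_1)\Big)\,\dd\lambda,
\end{equation*}
so everything reduces to the one-dimensional minimization of $g(\sigma):=H_{\KL}(\sigma,\varrho_0)+H_{\KL}(\sigma,\varrho_1)$ over $\sigma\ge0$, for fixed $\varrho_0,\varrho_1\ge0$.

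When $\varrho_0,\varrho_1>0$ the function $g$ is strictly convex and smooth on $(0,\infty)$ with $g'(\sigma)=2\log\sigma-\log\varrho_0-\log\varrho_1$; its unique critical point is the geometric mean $\sigma^\ast=\sqrt{\varrho_0\varrho_1}$, and substituting back the logarithmic terms cancel, leaving $g(\sigma^\ast)=\varrho_0+\varrho_1-2\sqrt{\varrho_0\varrho_1}=(\sqrt{\varrho_0}-\sqrt{\varrho_1})^2=H_2(\varrho_0,\varrho_1)$. The boundary cases are consistent: if, say, $\varrho_1=0$, then finiteness of $H_{\KL}(\sigma,0)$ forces $\sigma=0$, and $g(0)=\varrho_0=(\sqrt{\varrho_0}-0)^2$. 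Thus pointwise $\min_{\sigma\ge0}g(\sigma)=(\sqrt{\varrho_0}-\sqrt{\varrho_1})^2$, the minimum being attained at $\sigma^\ast=\sqrt{\varrho_0\varrho_1}$.

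These two facts yield both inequalities. For the lower bound, integrating the pointwise estimate $g(\sigma(x))\ge(\sqrt{\varrho_0(x)}-\sqrt{\varrho_1(x)})^2$ against $\lambda$ gives $\KL(\mu\mid\mu_0)+\KL(\mu\mid\mu_1)\ge\hed_2^2(\mu_0,\mu_1)$ for every $\mu\in\calM(\Omega)$, recalling that \eqref{eq:2} computes $\hed_2^2$ with any dominating measure. For attainment I would exhibit the explicit competitor $\mu^\ast:=\sqrt{\varrho_0\varrho_1}\,\lambda$ (with $\lambda=\mu_0+\mu_1$): it is a genuine finite measure since $\int\sqrt{\varrho_0\varrho_1}\,\dd\lambda\le\sqrt{\mu_0(\Omega)\,\mu_1(\Omega)}<\infty$ by Cauchy--Schwarz, it is independent of $\lambda$ because $(\varrho_0,\varrho_1)\mapsto\sqrt{\varrho_0\varrho_1}$ is positively $1$-homogeneous, and it satisfies $\mu^\ast\ll\mu_0$, $\mu^\ast\ll\mu_1$, its density vanishing wherever $\varrho_0\varrho_1=0$. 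Evaluating the functional at $\mu^\ast$ reproduces $\int g(\sigma^\ast)\,\dd\lambda=\hed_2^2(\mu_0,\mu_1)$, so the infimum is attained and the identity \eqref{eq:70} holds. The only genuinely delicate point is the measurable selection underlying the interchange of minimization and integration, but here it is harmless, since the minimizer $\sigma^\ast=\sqrt{\varrho_0\varrho_1}$ is an explicit Borel function of the densities. Finally, inequality \eqref{eq:83} follows at once from \eqref{eq:70} by choosing the admissible competitor $\mu=\mu_0$: since $\KL(\mu_0\mid\mu_0)=0$ one gets $\hed_2^2(\mu_0,\mu_1)\le\KL(\mu_0\mid\mu_0)+\KL(\mu_0\mid\mu_1)=\epy(\mu_0\mid\mu_1)$.
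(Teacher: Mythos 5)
Your proof is correct and follows essentially the same route as the paper's: both reduce \eqref{eq:70} to the pointwise minimization $\min_{r\ge 0} H_{\KL}(r,r_0)+H_{\KL}(r,r_1)=r_0+r_1-2\sqrt{r_0r_1}$ via the perspective representation \eqref{eq:72}, with the minimum attained at the geometric mean $r=\sqrt{r_0r_1}$. You merely spell out the details the paper leaves implicit (the dominating measure, the boundary cases, the finiteness and absolute continuity of the optimal competitor $\sqrt{\varrho_0\varrho_1}\,\lambda$, and the choice $\mu=\mu_0$ for \eqref{eq:83}), all of which check out.
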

\begin{proof}
  Recalling \eqref{eq:71} and \eqref{eq:72}, \eqref{eq:70} follows by the simple calculation
  \begin{displaymath}
    \min_{r\ge 0} H_\KL(r,r_0)+H_\KL(r,r_1)
    =r_0+r_1-2\sqrt{r_0r_1}=H_2^2(r_0,r_1),
  \end{displaymath}
  attained at $r=\sqrt{r_0r_1}$.
 \end{proof}
 We now look at the Hellinger distance in its dual formulation. We
 focus on a `static-dual' formulation first and then we proceed to the
 dynamic dual formulation in terms of subsolution of the equation
 $\partial \zeta_s+(p-1)\zeta_s^q=0$. This expression will play a crucial
 role in the contraction result of Proposition \ref{contr_Hell_thm} and the
 regularizing estimates of Theorems \ref{reg_1_th} and \ref{cor_stima}.
 In the next computation we adopt the convention to write
 \begin{displaymath}
   x^a:=
   \begin{cases}
     x^a&\text{if }x>0,\\
     0&\text{if }x=0,\\
     -(-x)^a&\text{if }x<0,
   \end{cases}
   \qquad\text{for every }x\in \erre,\ a>0.
 \end{displaymath}
 \noindent \begin{corollary}
   \label{cor:staticdual}
Let $p\in (1,\infty)$ and $q$ be the conjugate of $p$.
The Hellinger distance admits the following dual formulation: \begin{equation}\label{formulation_sup} \begin{split}
\hed_p^p(\mu_0,\mu_1)=\sup
\Big\{&\int_{\Omega}\psi_1\,\dd\mu_1+\int_{\Omega}\psi_0\,\dd\mu_0:
\psi_0,\psi_1\in \mathrm B_b(\Omega)
\\&
\psi_0,\psi_{1}< 1,\ 
(1-\psi_0^{q-1})(1-\psi_1^{q-1})\geq 1\Big\}.\end{split}
\end{equation}
\end{corollary}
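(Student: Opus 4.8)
The plan is to deduce the statement directly from the dual representation of the perspective functional in Theorem~\ref{dualformula_entropy}. By \eqref{eq:12} we have $\hed_p^p(\mu_0,\mu_1)=\mathscr H(\mu_0\mid\mu_1)$ for the $1$-homogeneous integrand $H_p(r,s)=|r^{1/p}-s^{1/p}|^p$, whose associated Csisz\'ar density is $F_p(r)=|r^{1/p}-1|^p$. Feeding $H=H_p$ into the second identity of Theorem~\ref{dualformula_entropy} (with $(\psi_0,\psi_1)$ in place of $(\phi,\psi)$) produces a supremum of exactly the announced shape; the entire content of the corollary is then to show that the abstract pointwise constraint $(\psi_0,\psi_1)\in\mathfrak H_p$ coincides with $\psi_0,\psi_1<1$ together with $(1-\psi_0^{q-1})(1-\psi_1^{q-1})\ge1$. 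Because \eqref{eq:4} holds for the pair $(F_p,H_p)$ we may use $\mathfrak H_p=\mathfrak F_p=\{(\phi,\psi):\psi\le-F_p^*(\phi)\}$, so the problem collapses to a single one--dimensional Legendre computation.

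First I would compute $F_p^*$. Substituting $r=t^p$ gives $F_p^*(\phi)=\sup_{t\ge0}\big(t^p\phi-|t-1|^p\big)$, and inspecting the growth as $t\to\infty$ shows the supremum is finite exactly when $\phi<1$. For such $\phi$ the stationarity condition reduces, after dividing by $pt^{p-1}$ on the appropriate branch, to $(1-1/t)^{p-1}=\phi$; invoking $q-1=1/(p-1)$ this yields the maximiser $t=1/(1-\phi^{q-1})$, and after substitution together with the simplification $(q-1)p=q$ one obtains the closed form
\[
  F_p^*(\phi)=\frac{\phi}{(1-\phi^{q-1})^{p-1}},\qquad \phi<1 .
\]
The sign convention for $\phi^{q-1}$ is precisely what is needed to make this single formula valid for $\phi\le0$ as well, so no case splitting survives into the final expression.

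It remains to recast $\psi\le-F_p^*(\phi)$ as the product condition, and here the only algebraic input is $(p-1)(q-1)=1$. On the boundary $\psi=-F_p^*(\phi)$ one computes $(-\psi)^{q-1}=\phi^{q-1}/(1-\phi^{q-1})$, hence $1-\psi^{q-1}=1/(1-\phi^{q-1})$ and therefore $(1-\phi^{q-1})(1-\psi^{q-1})=1$. Since $\phi\mapsto\phi^{q-1}$ is strictly increasing with the stated convention and $1-\phi^{q-1}>0$ whenever $\phi<1$, decreasing $\psi$ can only enlarge the product, so $\psi\le-F_p^*(\phi)$ is indeed equivalent to $(1-\phi^{q-1})(1-\psi^{q-1})\ge1$. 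The constraint $\psi<1$ is not an additional hypothesis but the horizontal asymptote of the admissible region: letting $\phi\to-\infty$ gives $-F_p^*(\phi)\to1$, while the symmetry of $H_p$ in its two slots makes the roles of $\psi_0$ and $\psi_1$ interchangeable and hence forces $\psi_1<1$ as well. I expect the main obstacle to be the bookkeeping in the evaluation of $F_p^*$---selecting the correct critical branch for both signs of $\phi$ and verifying that the sign convention genuinely collapses the two cases into one formula---since once $F_p^*$ is in hand the equivalence of the two descriptions of $\mathfrak H_p$ is a short manipulation resting entirely on $(p-1)(q-1)=1$.
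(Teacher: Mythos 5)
Your proposal is correct and follows essentially the same route as the paper: both reduce the statement to Theorem~\ref{dualformula_entropy}, compute the Legendre transform $F_p^*(\phi)=\phi/(1-\phi^{q-1})^{p-1}$ for $\phi<1$ (via the same substitution $r=t^p$, with the sign convention unifying the two branches), and then translate $\psi\le -F_p^*(\phi)$ into the product condition $(1-\phi^{q-1})(1-\psi^{q-1})\ge 1$ using $(p-1)(q-1)=1$. The only cosmetic difference is that you establish this last equivalence by monotonicity away from the boundary curve, whereas the paper raises the inequality directly to the power $q-1$; the content is identical.
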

 \begin{proof}
 The result is a consequence of Theorem \ref{dualformula_entropy} and
the
computation of the convex set $\mathfrak F_p$
associated to the perspective function $F_p$ of \eqref{eq:12};
it is sufficient to prove that
\begin{equation}
  \mathfrak{F_p}=\{(\psi_0,\psi_1)\in \erre^2: \psi_i< 1\,\,i=0,1,\
  (1-\psi_0^{q-1})(1-\psi_1^{q-1})\geq 1 \}.\label{eq:14}
\end{equation}
In order to show \eqref{eq:14} we first compute the Legendre transform
of $F_p$, obtaining
\begin{displaymath}
  F_p^*(\psi)=\sup_{r>0} r\psi-|r^{1/p}-1|^p=
  \sup_{s>0} s^p\psi-|s-1|^p
  =
  \begin{cases}
    \displaystyle \frac\psi{(1-\psi^{q-1})^{p-1}}&\text{if }\psi<1,\\
    +\infty&\text{if }\psi\ge 1.
  \end{cases}
\end{displaymath}
Recalling that $(q-1)(p-1)=1$, the inequality $-\psi_1\ge F_p^*(\psi_0)$ for $\psi_0,\psi_1\in
\erre$ is equivalent to 
\begin{displaymath}
  \psi_0<1\quad\text{and}\quad
  -\psi_1^{q-1}(1-\psi_0^{q-1}) \ge
  \psi_o^{q-1}=1-(1-\psi_0^{q-1}).
\end{displaymath}
We then obtain
\begin{displaymath}
  \begin{aligned}
    (\psi_0,\psi_1)\in \mathfrak F_p\quad &\Leftrightarrow\quad \psi_1\le -
    F_p^*(\psi_0)\\& \Leftrightarrow\quad \psi_0<1,\ \psi_1<1,\
    (1-\psi_0^{q-1})(1-\psi_1^{q-1})\ge 1,
  \end{aligned}
\end{displaymath}
which yields \eqref{eq:14}.
 \end{proof}
\noindent The dynamic counterpart of the dual formulation is outlined in the proposition below.
\begin{proposition}\label{prop:helldyndual}
Let $p\in (1,+\infty)$ and let $q$ be the conjugate of $p$. For every
$\mu_0$, $\mu_1$ in $\calM(\Omega),$
\begin{equation}\label{hell_dyn_dual}
  \begin{aligned}
    \hed_p^{p}(\mu_0,\mu_1)=\sup
    \Big\{&\int_{\Omega}\zeta_1\,\dd\mu_1-\int_{\Omega}\zeta_0\,\dd\mu_0:
    \\&\text{
    } \zeta\in\textnormal{C}^1([0,1],\mathrm B_b(\Omega)),\,\,\,
    \partial_t\zeta_t+(p-1)\zeta_t^q \leq 0\Big\}.
  \end{aligned}
\end{equation} 
\end{proposition}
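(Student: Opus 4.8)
The plan is to deduce the dynamic formula \eqref{hell_dyn_dual} from the static one in Corollary \ref{cor:staticdual} by using the solutions of the ODE $\partial_t\zeta_t+(p-1)\zeta_t^q=0$ to pass between the boundary data of a subsolution and the admissible pairs of $\mathfrak F_p$. Comparing the dynamic functional $\int_\Omega\zeta_1\,\dd\mu_1-\int_\Omega\zeta_0\,\dd\mu_0$ with the static one $\int_\Omega\psi_1\,\dd\mu_1+\int_\Omega\psi_0\,\dd\mu_0$ suggests the correspondence $\psi_1=\zeta_1$, $\psi_0=-\zeta_0$. Since both sides are completely local in $x$, I would fix $x\in\Omega$ and study the scalar trajectory $t\mapsto\zeta_t(x)$. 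Setting $w_t:=\zeta_t^{q-1}$ (with the sign convention above) and using $(q-1)(p-1)=1$, the equality ODE becomes the Riccati equation $\dot w_t=-w_t^2$, solved explicitly by $w_t=w_0/(1+tw_0)$; in particular $w_1=w_0/(1+w_0)$, whence the algebraic identity $(1+w_0)(1-w_1)=1$, which is exactly the boundary of the static constraint once the sign flip $\psi_0^{q-1}=-\zeta_0^{q-1}$ is taken into account.

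For the inequality $\leq$ in \eqref{hell_dyn_dual} I would take an admissible $\zeta\in \textnormal{C}^1([0,1],\mathrm B_b(\Omega))$ and, for each fixed $x$, let $\bar\zeta$ be the solution of the equality ODE with $\bar\zeta_0=\zeta_0$. Since $f(\zeta)=-(p-1)\zeta^q$ is locally Lipschitz (here $q>1$), the comparison principle gives $\zeta_1\le\bar\zeta_1$; moreover, if $\bar\zeta$ blew up on $[0,1]$ then the subsolution would blow up as well, contradicting $\zeta\in \textnormal{C}^1$, so $\bar\zeta$ is defined on all of $[0,1]$ and necessarily $\zeta_0>-1$. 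Writing $w_0=\zeta_0^{q-1}>-1$, the monotonicity of $r\mapsto r^{q-1}$ together with $\zeta_1\le\bar\zeta_1$ and the identity $(1+w_0)(1-\bar\zeta_1^{q-1})=1$ yields $\zeta_1<1$ and $(1+\zeta_0^{q-1})(1-\zeta_1^{q-1})\ge 1$. This says precisely that the pair $(\psi_0,\psi_1):=(-\zeta_0,\zeta_1)$ lies pointwise in $\mathfrak F_p$, so Corollary \ref{cor:staticdual} bounds the functional by $\hed_p^p(\mu_0,\mu_1)$, and taking the supremum over $\zeta$ proves $\leq$.

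For the reverse inequality I would start from an admissible static pair $(\psi_0,\psi_1)$, with $\psi_i<1$ and $(1-\psi_0^{q-1})(1-\psi_1^{q-1})\ge 1$, and run the equality flow forward from $\bar\zeta_0:=-\psi_0$. The constraint rewrites as $1-\psi_1^{q-1}\ge(1-\psi_0^{q-1})^{-1}$, giving $\psi_1^{q-1}\le -\psi_0^{q-1}/(1-\psi_0^{q-1})=\bar\zeta_1^{q-1}$, hence $\psi_1\le\bar\zeta_1$; since $\mu_1\ge 0$ and $\bar\zeta_0=-\psi_0$, the admissible competitor $\bar\zeta$ satisfies $\int_\Omega\bar\zeta_1\,\dd\mu_1-\int_\Omega\bar\zeta_0\,\dd\mu_0\ge\int_\Omega\psi_1\,\dd\mu_1+\int_\Omega\psi_0\,\dd\mu_0$. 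It remains to check that $\bar\zeta\in \textnormal{C}^1([0,1],\mathrm B_b(\Omega))$: boundedness of $\psi_1$ together with the constraint keeps $\psi_0$ bounded away from $1$, so $w_0=-\psi_0^{q-1}$ stays bounded away from $-1$ and the explicit flow $w_t=w_0/(1+tw_0)$ remains uniformly bounded and $\textnormal{C}^1$ in $t$ on $[0,1]$. Taking the supremum over static pairs and invoking Corollary \ref{cor:staticdual} gives $\geq$, which completes the argument.

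The delicate points, which I would treat with care rather than as routine, are the role of the sign convention for $\zeta^q$ and $\zeta^{q-1}$ near $\zeta=0$ (the latter may fail to be $\textnormal{C}^1$ when $p>2$, so I would argue through $\zeta$ itself, using that the sign of $\zeta_t$ is preserved by the flow and that $0$ is a stationary solution), and the comparison/non-blow-up argument that simultaneously forces $\zeta_0>-1$ and produces the monotone bound $\zeta_1\le\bar\zeta_1$. I expect this last step to be the main obstacle, since everything else reduces to the algebraic identity $(1+w_0)(1-w_1)=1$ and to the already established static duality of Corollary \ref{cor:staticdual}.
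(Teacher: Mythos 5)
Your argument is correct and follows essentially the same route as the paper's proof: both reduce the dynamic formula to the static duality of Corollary \ref{cor:staticdual}, identify the optimal static pair $(\psi_0,-F_p^*(\psi_0))$ with the time-$1$ map of the equality ODE $\partial_t\zeta_t+(p-1)\zeta_t^q=0$, and invoke ODE comparison to handle arbitrary subsolutions. Your write-up is in fact more explicit on the points the paper leaves implicit --- the blow-up/comparison argument showing that bounded subsolutions necessarily satisfy $\zeta_0>-1$, the verification that the equality flow started from a static pair is an admissible $\mathrm{C}^1([0,1],\mathrm B_b(\Omega))$ competitor, and the careful reading of the power convention on the negative branch that makes the Riccati substitution $\dot w_t=-w_t^2$ legitimate --- so no gap remains.
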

\begin{proof}
  First of all we manipulate the formulation \eqref{hell_dyn_dual} so
  that we can maximize with respect to one function only.
  We first observe that replacing, e.g.~$\psi_i$ by $\psi_{i,\eps}:=\psi_i-\eps$,
  $\eps>0$, the couple $(\psi_{0,\eps},\psi_{1,\eps})$ is still admissible
  and
  \begin{displaymath}
    \sum_i\int_\Omega \psi_i\,\dd\mu_i=\lim_{\eps\downarrow0}
    \sum_i\int_\Omega \psi_{i,\eps}\,\dd\mu_i,
  \end{displaymath}
  so that it is not restrictive to assume $\sup \psi_i<1$ in
  \eqref{formulation_sup}.
  
  that for every choice of $\psi_0\in \mathrm B_b(\Omega)$ satisfying
  $\sup\psi_0<1$ the best selection of $\psi_1$ in order to maximize
  $\sum_i \int_\Omega \psi_i\,\dd\mu_i$ is given by
  \begin{displaymath}
    \psi_1=-F^*_p(\psi_0)=
    \frac{-\psi_0}{\big(1-\psi_0^{q-1}\big)^{p-1}}.
  \end{displaymath}
  Setting $\zeta_0:=-\psi_0$ we obtain the formula
  \begin{displaymath}
    \hed_p^p(\mu_0,\mu_1)=
    \sup_{\zeta_0\in \mathrm B_b(\Omega),\ \zeta_0>-1}
    \left(\int_\Omega\frac{\zeta_0}{\big(1+\zeta_0^{q-1}\big)^{p-1}}\,\dd\mu_1-\int_\Omega\zeta_0\,\dd\mu_0\right)
    .
\end{displaymath}
%
%
On the other hand
we observe that the function
$\zeta_1:=\frac{\zeta_0}{\big(1+\zeta_0^{q-1}\big)^{p-1}}$
corresponds to the solution at time $t=1$ of
\begin{equation}
\left\{
  \begin{aligned}
    \partial_t \zeta(t,x)+(p-1)\zeta^q(t,x)&=0&&\text{in }[0,1]\times \Omega,\\
    \zeta(0,x)&=\zeta_0(x)&&\text{in }\Omega.
  \end{aligned}
  \right.
\label{eq:20}
\end{equation}
and by the comparison theorem for ordinary differential equation, any
subsolution to \eqref{eq:20} will satisfy $\zeta(1,x)\le \zeta_1(x)$.
\end{proof}
\subsection{Kantorovich-Wasserstein and Hellinger-Kantorovich distances}
\subsubsection*{Kantorovich-Wasserstein distance.} The standard
definition of the Kantorovich\phantom{{-}}\-Wasserstein distance arises in a natural
way in the frame of optimal transport. Here we  recall the definition
only and we refer to \cite{AGS08,Villani09} for further details.

We will deal with a complete and separable metric space
$(X,\mathsf{d})$; we denote by
$\mathscr{B}(X)$ its Borel $\sigma$-algebra and by $\calP(X)$ the space of Borel probability
measures on $X$. For $p\ge 1$ we set
\begin{displaymath}
  \mathcal P_p(X):=\Big\{\mu\in \mathcal P(X):
  \int_X \mathsf d^p(x,o)\,\dd\mu(x)<+\infty\Big\},
\end{displaymath}
where $o$ is an arbitrary point of $X$ (the definition is independent
of the choice of $o$).

If $\boldsymbol t:X\to Y$ is a Borel map between two metric spaces,
we denote by $\boldsymbol t_\sharp:\mathcal P(X)\to\mathcal P(Y)$ the
corresponding push-forward operation, defined by
\begin{displaymath}
  \boldsymbol t_\sharp\mu(B):=\mu(\boldsymbol t^{-1}(B))\quad
  \text{for every }B\in \mathscr B(Y).
\end{displaymath}
In particular, when we consider the canonical cartesian projections
$\pi^i:X\times X\to X$ defined by $\pi^i(x_0,x_1):=x_i$, $i=0,1$,
and a general measure (also called transport plan) $\boldsymbol\mu\in
\mathcal P(X\times X)$,
the measures $\mu_i=\pi^i_\sharp \mu$ are the marginals of
$\boldsymbol \mu$.
\begin{definition}\label{OTp}
Let $p\in [1,\infty)$. For any $\mu_0,\,\mu_1\in\calP_p(X)$ the
$p$-Kantorovich-Wasserstein distance is defined by
\begin{displaymath}
  \begin{aligned}
    \sfW_p^p(\mu_0,\mu_1):=\min\Big\{&\int
    \mathsf{d}^{p}(x_0,x_1)\,\dd \boldsymbol{\mu}(x_0,x_1):
    \boldsymbol{\mu}\in\mathcal P(X\times X),\
    \pi^i_\sharp\boldsymbol\mu=\mu_i\,\,, i=0,1\Big\}.
  \end{aligned}
\end{displaymath}
\end{definition}
\nc
As we will see, a  key ingredient we will extensively use in our
arguments
is given by the dynamic dual formulation of the Wasserstein distance,
in terms of the subsolutions of the Hamilton-Jacobi equation.
Such a result, which has been formulated in different form
by \cite{Otto-Villani00,AGS14I,AMS15preprint,AES16}, 
holds if $(X,\sfd)$ is a \emph{length space},
i.e.~if for every $x_0,x_1\in X$ and every $\theta>1/2$ there exists
an approximate mid-point $x_{\theta}\in X$ such that
\begin{displaymath}
  \max\big(\sfd(x_{0},x_{\theta}),\sfd(x_\theta,x_1)\big)\le \theta\sfd(x_0,x_1).
\end{displaymath}
We denote by $\tnLip_b(X)$ the Banach space of bounded Lipschitz
functions $f:X\to \erre$ endowed with the norm
\begin{displaymath}
  \|f\|_{\tnLip_b}:=\sup_{x\in X} |f|+\tnLip(f,X),\quad
  \tnLip(f,X):=\sup_{x,y\in X,\ x\neq y}\frac{|f(x)-f(y)|}{\sfd(x,y)}.
\end{displaymath}
\begin{proposition}
  \label{prop:dual_dyn}
  If $(X,\sfd)$ is a length space then for every $\mu_0,\mu_1\in \calP_p(X)$
  \begin{equation}\label{dyn_Wass}\begin{split}
\frac{1}{p}\sfW_p^p(\mu_0,\mu_1)=\sup\Big\{&\int_X\zeta_1\,\dd\mu_1-\int_X\zeta_0\,\dd\mu_0:\\
&\zeta\in \rmC^1([0,1],\tnLip_b(X)) \text{ s.t. }\partial_t\zeta_t+\frac{1}{q}\abs{\tD \zeta_t}^q\leq 0 \Big\},
\end{split}
\end{equation}
where $q$ is the conjugate of $p$.
\end{proposition}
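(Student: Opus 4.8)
The plan is to combine the classical Kantorovich duality for the cost $c(x,y):=\tfrac1p\sfd^p(x,y)$ with the metric Hopf--Lax semigroup, whose basic properties on length spaces are established in \cite{AGS14I}. Throughout, $q$ is the conjugate of $p$, so that Young's inequality reads $ab\le\tfrac1q a^q+\tfrac1p b^p$ for $a,b\ge0$. Denote by $S$ the right-hand side of \eqref{dyn_Wass}, and recall the duality
\[
  \frac1p\sfW_p^p(\mu_0,\mu_1)=\sup\Big\{\int_X\psi\,\dd\mu_1-\int_X\phi\,\dd\mu_0:\ \psi,\phi\in\tnLip_b(X),\ \psi(y)-\phi(x)\le\tfrac1p\sfd^p(x,y)\Big\},
\]
where the restriction to bounded Lipschitz potentials is harmless by the usual truncation and $c$-conjugacy arguments (see \cite{AGS08}).

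For the inequality $S\le\tfrac1p\sfW_p^p$ I take an admissible $\zeta\in\rmC^1([0,1],\tnLip_b(X))$ and fix $x_0,x_1\in X$. Using that $(X,\sfd)$ is a length space, I join $x_0$ to $x_1$ by a curve $\gamma:[0,1]\to X$ with metric speed as close as desired to the constant $\sfd(x_0,x_1)$, and estimate, for a.e.\ $t$,
\[
\begin{aligned}
\frac{\dd}{\dd t}\zeta_t(\gamma(t))
&\le\partial_t\zeta_t(\gamma(t))+\abs{\tD\zeta_t}(\gamma(t))\,\abs{\dot\gamma(t)}\\
&\le-\tfrac1q\abs{\tD\zeta_t}^q(\gamma(t))+\abs{\tD\zeta_t}(\gamma(t))\,\abs{\dot\gamma(t)}
\le\tfrac1p\abs{\dot\gamma(t)}^p,
\end{aligned}
\]
where the first bound uses the local slope \eqref{eq:55} as an upper gradient along $\gamma$, the second the subsolution inequality, and the last Young's inequality. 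Integrating in $t$ and optimizing over $\gamma$ gives $\zeta_1(x_1)-\zeta_0(x_0)\le\tfrac1p\sfd^p(x_0,x_1)$; integrating against an optimal plan $\boldsymbol{\mu}$ and using Kantorovich duality yields $S\le\tfrac1p\sfW_p^p$.

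For the reverse inequality I build a near-optimal subsolution from the dual potentials. Let $\phi\in\tnLip_b(X)$ approximate an optimal Kantorovich potential and define the Hopf--Lax evolution
\[
  \zeta_t(x):=\inf_{y\in X}\Big\{\phi(y)+\frac{t}{p}\Big(\frac{\sfd(x,y)}{t}\Big)^p\Big\}\quad(t>0),\qquad\zeta_0:=\phi.
\]
By the metric Hopf--Lax theory \cite{AGS14I}, $\zeta_t\in\tnLip_b(X)$, the map $t\mapsto\zeta_t$ is Lipschitz, and $\zeta$ satisfies $\partial_t\zeta_t+\tfrac1q\abs{\tD\zeta_t}^q\le0$ pointwise (with equality on length spaces). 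Moreover $\zeta_1=\phi^c$ is exactly the $c$-transform of $\phi$, so $\int_X\zeta_1\,\dd\mu_1-\int_X\zeta_0\,\dd\mu_0$ equals the dual value, which can be made arbitrarily close to $\tfrac1p\sfW_p^p$.

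The main obstacle is the regularity demanded by the class $\rmC^1([0,1],\tnLip_b(X))$: the Hopf--Lax flow is merely Lipschitz in time with one-sided derivatives, and the Hamilton--Jacobi inequality holds a priori only pointwise, not for a genuinely $\rmC^1$ curve. I would therefore upgrade $\zeta$ by a double regularization — a small time-mollification together with a strict perturbation of the form $\zeta_t\mapsto\zeta_t-\eps t$ to absorb the error introduced by smoothing — producing admissible $\rmC^1$ subsolutions whose dual value converges to the optimum as $\eps\downarrow0$. Checking that this regularization preserves admissibility while forfeiting only a vanishing amount in the objective is the technical heart of the argument; the two inequalities then combine to give \eqref{dyn_Wass}.
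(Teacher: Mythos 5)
Your proposal is correct in substance, and on the first inequality it takes a genuinely different route from the paper. For the bound ``sup $\le \frac1p\sfW_p^p$'' the paper lifts the problem to the Wasserstein space: since $(X,\sfd)$ is a length space so is $(\calP_p(X),\sfW_p)$, and along an almost-geodesic Lipschitz curve of measures $(\mu_t)_{t\in[0,1]}$ it invokes \cite[Lemma 6.4, Theorem 6.6]{AMS15preprint} to control $\int_X\zeta_1\,\dd\mu_1-\int_X\zeta_0\,\dd\mu_0$ by $\int_0^1\int_X\big(\partial_t\zeta_t+\frac1q|\tD\zeta_t|^q\big)\,\dd\mu_t\,\dd t+\frac1p\int_0^1|\dot\mu_t|^p_{\sfW_p}\,\dd t$. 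You instead argue pointwise in $X$: joining $x_0$ to $x_1$ by a near-constant-speed curve, using the slope of a Lipschitz function along the curve and Young's inequality, you get $\zeta_1(x_1)-\zeta_0(x_0)\le\frac1p\sfd^p(x_0,x_1)$, i.e.\ $(\zeta_0,\zeta_1)$ is an admissible Kantorovich pair, and then integrating against an optimal plan finishes the job (only weak duality is needed, not the full duality theorem you quote). This is more elementary and self-contained, avoiding the superposition-type machinery of \cite{AMS15preprint}, at the price of the a.e.\ chain-rule estimate for $t\mapsto\zeta_t(\gamma(t))$, which is routine for $\zeta\in \rmC^1([0,1],\tnLip_b(X))$ since that map is Lipschitz. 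For the reverse inequality your route coincides with the paper's: a near-optimal bounded Lipschitz potential, its Hopf--Lax evolution, and the observation that the only obstruction is upgrading a merely Lipschitz-in-time subsolution to the class $\rmC^1([0,1],\tnLip_b(X))$. (Incidentally, your kernel $\inf_y\{\phi(y)+\sfd^p(x,y)/(pt^{p-1})\}$ is the one matching the Hamiltonian $\frac1q|\tD\zeta|^q$; the paper's displayed kernel $\frac1{qt^{q-1}}\sfd^q$ has the exponents swapped.) Your proposed fix --- time mollification, which preserves subsolutions because the slope is convex under convex combinations and $r\mapsto r^q$ is convex, together with a strict perturbation and a rescaling to handle the endpoints of $[0,1]$ --- is precisely the content of the rescaling argument of \cite{AES16} and the smoothing technique of \cite[Theorem 8.12]{LMS18} that the paper cites without reproducing, so your sketch leaves the argument at essentially the same level of completeness as the paper's own proof.
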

\begin{proof}
  Let $\mu_0,\mu_1\in \calP_p(X)$; since $(X,\sfd)$ is a length space,
  also $(\calP_p(X),\sfW_p)$ is a length space, so that for every
  $a>1$ we can find a Lipschitz curve $\mu:[0,1]\to\calP_p(X)$ such that
  \begin{equation}\label{eq:75}
    |\dot\mu_t|_{\sfW_p}:=\limsup_{h\to0}\frac{\sfW_p(\mu_t,\mu_{t+h})}{|h|}\le
    a\sfW_p(\mu_0,\mu_1)
    \quad\text{for every }t\in [0,1].
  \end{equation}
  It follows that for every curve $\zeta\in \rmC^1([0,1],\tnLip_b(X))$
  the map $t\mapsto\int_X \zeta_t\,\dd\mu_t$ is
  Lipschitz continuous and by \cite[Lemma 6.4, Theorem 6.6]{AMS15preprint}
  \begin{displaymath}
    \int_X \zeta_1\,\dd\mu_1-
    \int_X\zeta_2\,\dd\mu_2\le
    \int_0^1\int_X \Big(\partial_t\zeta_t+\frac 1q|\mathrm D
    \zeta_t|^q(X)\Big)\,\dd\mu_t\,\dd t+
    \frac 1p\int_0^1 |\dot\mu_t|_{\sfW_p}^p\,\dd t;
  \end{displaymath}
  if $\zeta$ is also a subsolution to the Hamilton-Jacobi equation
  \begin{equation}
    \label{eq:74}
    \partial_t \zeta+\frac 1q|\mathrm D\zeta|^q\le 0\quad
    \text{ in }[0,1]\times X,
  \end{equation}
  then the previous inequality, the bound \eqref{eq:75} on the metric velocity
  $|\dot\mu_t|_{\sfW_p}$ and the arbitrariness of $a>1$ yield
  \begin{displaymath}
     \int_X \zeta_1\,\dd\mu_1-
     \int_X\zeta_2\,\dd\mu_2\le\frac 1p \sfW_p^p(\mu_0,\mu_1)
     \quad\text{for every }\zeta\in \rmC^1([0,1];\tnLip_b(X)) \text{
       as in }\eqref{eq:74}.
  \end{displaymath}
  On the other hand, for every $a<1$ we can use the Hopf-Lax semigroup
  \begin{displaymath}
    \mathsf Q_t\zeta(x):=\inf_{y\in X}\frac 1{qt^{q-1}}\sfd^q(x,y)+\zeta(y)
  \end{displaymath}
  and Kantorovich duality for the Wasserstein distance
  to find $\zeta_0\in \tnLip_b(X)$ such that
  \begin{displaymath}
    \int_X \mathsf Q_1\zeta_0\,\dd\mu_1-
    \int_X \zeta_0\,\dd\mu_0\ge \frac ap\sfW_p^p(\mu_0,\mu_1).
  \end{displaymath}
  Using the refined estimate on the Hopf-Lax semigroup of
  \cite{AGS14I} we can show that $\zeta_t:=\mathsf Q_t\zeta_0$
  is uniformly bounded in $\tnLip_b(X)$, is Lipschitz continuous
  with values in $\mathrm C_b(X)$ and satisfies
  \begin{displaymath}
    \partial^+_t \zeta+\frac 1q|\mathrm D\zeta|^q\le 0\quad
    \text{ in }[0,1)\times X,\quad
    \partial^+_t \zeta_t(x)=
    \lim_{h\downarrow0}h^{-1}\big(\zeta_{t+h}(x)-\zeta_t(x)\big).
  \end{displaymath}
  By using a rescaling argument of \cite{AES16} and the smoothing
  technique
  of the proof of \cite[Theorem 8.12]{LMS18} we conclude.  
\end{proof}
\subsubsection*{Hellinger-Kantorovich distance.}
After Hellinger-Kakutani and Kantorovich-Wasserstein distances,
we recall the definition of a third distance between probability
measures, 
that plays a role in the main contributions of this work.

Let
$(X,\mathsf{d})$ be a separable complete metric space. The
Hellinger-Kantorovich distances are defined on the space of finite
nonnegative Borel measures $\calM(X)$ and they do not require
measures to have the same mass. As in the previous cases of $\hed_p$
or $\sfW_p$, the
Hellinger-Kantorovich distances admit different formulations that we
summarize below. 
Here we focus on the family of distances $\HK_\alpha$
depending on a tuning parameter $\alpha>0$;
they correspond to the case $\HK_{\alpha,\beta}$ of \cite{LMS16}
with the choice $\beta:=4$.
In the even more specific case $\alpha=1$,
$\HK_1$ coincides with the distance $\HK$ which has been extensively studied
in \cite{LMS18}. The general case $\alpha\neq 1$ can be reduced
to the case $\alpha=1$ by rescaling the distance $\sfd$ by a factor
$\alpha^{-1/2}$.

The first formulation comes from the
Logarithmic-Entropy-Transport problem, where the constraints on the
marginals typical of optimal transport problems \eqref{OTp} are
relaxed by the introduction of two penalizing functionals. The primal
formulation of the Hellinger-Kantorovich distance is the following:
\begin{definition}
  For any $\mu_0\,\mu_1\in\calM(X)$,
  \begin{displaymath}
    \begin{split}
      \HK_\alpha^2(\mu_0,\mu_1):=\min\Big\{&\sum_{i}\KL(\gamma_i|\mu_i)+\int
\ell_\alpha(\mathsf d(x_0,x_1))\,d\boldsymbol{\gamma}:\\
&\text{ }\boldsymbol{\gamma}\in\calM(X\times X),
\text{ }\pi^i_\sharp\boldsymbol{\gamma}=\gamma_i\ll\mu_i , \,\,\,\,i=0,1\Big\},
	\end{split}
\end{displaymath}
where $\ell_\alpha:[0,+\infty)\to [0,+\infty]$ is the cost function
defined by \eqref{eq:76}.
\end{definition}
A direct comparison with \eqref{eq:70} by restricting $\ggamma$
to plans of the form 
$\ggamma:=\iota_\sharp\mu$ where $\mu\in \calM(X)$ is an arbitrary
measure
dominating $\mu_i$ and $\iota:X\to X\times X$ is the diagonal identity
map
$\iota(x):=(x,x)$, immediatily yields
\begin{equation}
  \label{eq:67}
  \HK_\alpha(\mu_0,\mu_1)\le \hed_2(\mu_0,\mu_1)\quad\text{for every
    $\mu_0,\mu_1\in \calM(X)$ and $\alpha>0$.}
  \end{equation}
  \cite[Theorem 7.22]{LMS18} also shows that
  \begin{displaymath}
    \lim_{\alpha\downarrow0}\HK_\alpha(\mu_0,\mu_1)=\hed_2(\mu_0,\mu_1).
  \end{displaymath}
  \cite[Proposition 7.23, Theorem 7.24]{LMS18} provide two further
  useful bounds of $\HK_\alpha$ in terms of $\sfW_2$, when
  $\mu_0,\mu_1\in \calP_2(X)$:
  \begin{equation}
    \label{eq:79}
    \sqrt\alpha \HK_\alpha(\mu_0,\mu_1)\le \sfW_2(\mu_0,\mu_1),\quad
    \lim_{\alpha\uparrow+\infty}\sqrt\alpha \HK_\alpha(\mu_0,\mu_1)=
    \sfW_2(\mu_0,\mu_1).
  \end{equation}
\noindent The $\HK_\alpha$ distance admits an equivalent dual
formulation in terms of subsolutions to a suitable version of the
Hamilton-Jacobi equation, which can be compared with
\eqref{hell_dyn_dual} and \eqref{dyn_Wass}:
in fact, it is possible to show \cite[Section 8.4]{LMS18} that \begin{equation}\label{HK_weight}\begin{split}
\HK_{\alpha}^2(\mu_0,\mu_1)=\sup\Big\{\int_{X}\zeta_1\,\dd\mu_1-&\int_X\zeta_0\,\dd\mu_0:\text{ }\zeta\in \textnormal{C}^{1}([0,1], \tnLip_b(X))\text{ s.t. }\\
	&\partial_t\zeta_t+\frac{\alpha}{4}\abs{\tD\zeta_t}^2+\zeta_t^2\leq 0 \text{ in }[0,1]\times X\Big\}.
\end{split}
\end{equation}
\section{Metric measure spaces with curvature bounds}
\label{sec:metric-measure}
This section is dedicated to a brief review of a few notions related
to calculus and Sobolev spaces in metric measure spaces.
We refer to \cite{AGS14I} and  \cite{AGS14D} for a complete review of the topic.
\subsection{Calculus in metric measure spaces: basic notions }

%

 \newcommand{\www}{{w}}
\noindent  Let $(X,\mathsf{d})$ be a complete and
separable metric space, endowed with a Borel positive measure
$\mathfrak{m}$ satisfying the growth condition \eqref{eq:80}
and $\operatorname{supp}(\gm)=X$.
As we already mentioned in the Introduction,
on this class of metric measure space
it is possible to introduce an effective metric counterpart
of the classic Dirichlet energy form in Euclidean spaces and of the
corresponding Sobolev spaces.
In the following, we will recall the basic notions only, which are
strictly necessary to understand the main results of the work, by
adopting the Cheeger point of view.
\begin{definition}
A function $G\in L^2(X,\mathfrak{m})$ is a relaxed gradient of $f\in L^2(X,\mathfrak{m})$ if there exist Borel $\mathsf{d}$-Lipschitz functions $f_n\in L^2(X,\mathfrak{m})$ such that: \begin{itemize}
\item[a)] $f_n\to f$ in $L^2(X,\mathfrak{m})$ and $\abs{\textnormal{D}f_n}$ weakly converge to $\tilde{G}$ in $L^2(X,\mathfrak{m})$;
\item[b)] $\tilde{G}\leq G$ $\mathfrak{m}$-a.e. in $X$.
\end{itemize}
We say that $G$ is the minimal relaxed gradient of $f$ if its $L^2(X,\mathfrak{m})$ norm is minimal among relaxed gradients. We shall denote by $\abs{\textnormal{D}f}_\www$ the minimal relaxed gradient. 
\end{definition}
\noindent The minimal relaxed gradient is used to
give an integral formulation of the Cheeger energy
\eqref{eq:54},
which can be represented as
\begin{displaymath}
\mathsf{Ch}(f)= \frac{1}{2}\int_X
\abs{\text{D}f}^2_\www\,\dd\mathfrak{m}
\quad\text{if $f$ has a $L^2$ relaxed gradient},
\end{displaymath}
and set equal to $+\infty$ if $f$ has no relaxed gradients.
The Cheeger energy is a convex, $2$-homogeneous lower semicontinuous functional on $L^2(X,\mathfrak{m})$ with dense domain $\mathcal{D}(\mathsf{Ch})$ \cite[Th. 4.5]{AGS14I}.
From the lower semicontinuity of $\mathsf{Ch}$ it follows that the domain $\mathcal{D}(\mathsf{Ch})$ endowed with the norm \[
  \norm{f}_{\rmW^{1,2}}:=\sqrt{\norm{f}_2^2+\norm{\abs{\text{D}f}_\www}_{2}^2}
 \]
is a Banach space, which is called $\rmW^{1,2}(X,\mathsf{d},\mathfrak{m})$. In general it is not a Hilbert space and this causes the potential non linearity of the heat flow. 
The following proposition summaries some useful properties of the minimal relaxed gradient, which will be helpful for our purposes.

\begin{proposition}\label{calculus}
Let $f\in L^2(X,\mathfrak{m})$. Then the following properties hold: \begin{itemize}
\item[a)] $\abs{\tD f}_\www=\abs{\tD g}_\www$ $\mathfrak{m}$-a.e. on $\{f-g=c\}$ for all constants $c\in \erre$ and $g\in L^2(X,\mathfrak{m})$ with $\mathsf{Ch}(g)<+\infty$;

\item[b)] $\phi(f)\in \mathcal{D}(\mathsf{Ch})$ and $\abs{\tD
    \phi(f)}_\www
  \leq \abs{\phi'(f)} \abs{\tD f}_\www$ for any Lipschitz function $\phi$ on an interval $I$ containing the image of $f$; the inequality refines to the equality $\abs{\tD\phi(f)}_\www=\abs{\phi'(f)} \abs{\tD f}_\www$ if in addition $\phi$ is nondecreasing;
\item[c)]  if $f,g\in \mathcal{D}(\mathsf{Ch})$ and $\phi:\erre\to \erre$ is a nondecreasing contraction, then \[ 
\abs{\tD(f+\phi(g-f))}_\www^2+\abs{\tD(g-\phi(g-f))}_\www^2\leq \abs{\tD f}_\www^2+\abs{\tD g}_\www^2\qquad\mathfrak{m}\text{-a.e. in }X.
\]
\end{itemize}
\end{proposition}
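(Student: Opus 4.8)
The plan is to obtain the three statements from the definition of the minimal relaxed gradient by approximation with Lipschitz functions, using the chain rule (b) as the basic building block. For (b) I would start from an \emph{optimal} approximating sequence for $f$: bounded Lipschitz functions $f_n\to f$ in $L^2(X,\gm)$ whose slopes $\abs{\tD f_n}$ converge to $\abs{\tD f}_\www$ \emph{strongly} in $L^2(X,\gm)$, the strong convergence being extracted from the defining weak convergence through Mazur's lemma and the minimality of the $L^2$ norm of $\abs{\tD f}_\www$. For $\phi\in\mathrm C^1$ with bounded derivative the elementary pointwise inequality $\abs{\tD(\phi\circ f_n)}\le \abs{\phi'(f_n)}\,\abs{\tD f_n}$ holds for the slopes; since $\phi(f_n)\to\phi(f)$ in $L^2(X,\gm)$ and the right-hand side converges to $\abs{\phi'(f)}\,\abs{\tD f}_\www$ in $L^2(X,\gm)$ (the bounded factor $\phi'(f_n)$ converges $\gm$-a.e.\ along a subsequence, the factor $\abs{\tD f_n}$ converges strongly), any weak limit of $\abs{\tD(\phi\circ f_n)}$ is a relaxed gradient of $\phi(f)$ dominated by $\abs{\phi'(f)}\,\abs{\tD f}_\www$, which yields the inequality; the merely Lipschitz case follows by mollifying $\phi$. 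For the equality when $\phi$ is nondecreasing I would first treat a strictly increasing $\mathrm C^1$ map by applying the inequality to $\phi^{-1}$ and to $\phi(f)$, and then pass to a general nondecreasing $\phi$ via the regularization $\phi_\eps:=\phi+\eps\,\mathrm{id}$, using the subadditivity bound $\abs{\tD\phi(f)}_\www\ge \abs{\tD\phi_\eps(f)}_\www-\eps\abs{\tD f}_\www$ to recover the missing inequality as $\eps\downarrow0$.

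For the locality statement (a) I would reduce to $c=0$ by replacing $g$ with $g+c$ and argue through a vanishing lemma: for every $h\in\calD(\mathsf{Ch})$ and every Lebesgue-negligible $N\subset\erre$ one has $\abs{\tD h}_\www=0$ $\gm$-a.e.\ on $h^{-1}(N)$. This is a direct consequence of the chain rule applied to the nondecreasing contraction $\phi(t):=\int_0^t \mathbf 1_{\erre\setminus N}$, which coincides with the identity (because $\mathcal L^1(N)=0$) while having derivative $\mathbf 1_{\erre\setminus N}$, so that $\abs{\tD h}_\www=\abs{\tD\phi(h)}_\www=\mathbf 1_{\erre\setminus N}(h)\,\abs{\tD h}_\www$. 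Applying the lemma to $h:=f-g$ and $N=\{0\}$ gives $\abs{\tD(f-g)}_\www=0$ $\gm$-a.e.\ on $\{f=g\}$, and combining this with the subadditivity of relaxed gradients (writing $f=(f-g)+g$ and, symmetrically, $g=(g-f)+f$) forces $\abs{\tD f}_\www=\abs{\tD g}_\www$ $\gm$-a.e.\ there.

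Statement (c) is the one I expect to be the main obstacle, since it couples the gradient energies of two functions and, in the general non-Hilbertian setting, cannot be read off from the slopes: already for $\phi=(\cdot)^+$, where it reduces to $\abs{\tD\max(f,g)}_\www^2+\abs{\tD\min(f,g)}_\www^2=\abs{\tD f}_\www^2+\abs{\tD g}_\www^2$, the corresponding pointwise inequality for slopes fails on $\{f=g\}$ and the equality is rescued only by the locality (a). The correct route is infinitesimal. Setting $h:=g-f$ and $s:=\phi'(h)\in[0,1]$, and using the differential calculus of \cite{AGS14I,AGS14D} together with the chain rule, one writes, for $u:=f+\phi(g-f)$ and $v:=g-\phi(g-f)$, the differentials $\mathrm d u=(1-s)\,\mathrm d f+s\,\mathrm d g$ and $\mathrm d v=s\,\mathrm d f+(1-s)\,\mathrm d g$, with $\abs{\mathrm d f}=\abs{\tD f}_\www$ and likewise for $g,u,v$. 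The key elementary fact is then that, for any norm $\abs\cdot$, the map $s\mapsto \abs{(1-s)a+s b}^2+\abs{sa+(1-s)b}^2$ is convex and takes the same value $\abs a^2+\abs b^2$ at $s=0$ and $s=1$, hence stays below $\abs a^2+\abs b^2$ on $[0,1]$; applied fiberwise with $a=\mathrm d f$, $b=\mathrm d g$ this gives $\abs{\tD u}_\www^2+\abs{\tD v}_\www^2\le\abs{\tD f}_\www^2+\abs{\tD g}_\www^2$ $\gm$-a.e. The only delicate point is the meaning of $s=\phi'(h)$ on the set where $\phi$ fails to be differentiable: this set is $h^{-1}(N)$ with $N$ Lebesgue-negligible, where the vanishing lemma from (a) gives $\abs{\tD h}_\www=0$ and hence $\mathrm d u=\mathrm d f$, $\mathrm d v=\mathrm d g$, so the inequality holds trivially. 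The hard part is therefore the justification of the linear differential structure underlying the identities for $\mathrm d u$ and $\mathrm d v$, which is precisely where the calculus of \cite{AGS14I,AGS14D} is needed.
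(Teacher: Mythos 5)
The paper does not prove Proposition \ref{calculus} at all --- it cites \cite[Prop.~4.8]{AGS14I} --- so the benchmark is that proof together with the paper's own Lemmas \ref{lemma_crucial} and \ref{rmk_relax}, which the authors explicitly present as a generalization of item (c). Against that benchmark your proposal has one genuine gap, and it sits at the foundation of everything else: your proof of the vanishing lemma in (a) is circular. The function $\phi(t):=\int_0^t \mathbf 1_{\erre\setminus N}(s)\,\dd s$ is, as you yourself observe, \emph{identically the identity map}; its derivative in the sense in which it enters the chain rule you have proved (classical derivative, or limit of mollified derivatives) is the constant $1$ everywhere, so (b) applied to this $\phi$ yields only the tautology $|\tD h|_w\le|\tD h|_w$. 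Writing $|\tD\phi(h)|_w=\mathbf 1_{\erre\setminus N}(h)\,|\tD h|_w$ amounts to inserting a hand-picked a.e.-representative of $\phi'$ into the chain rule; but the assertion that the right-hand side of the chain rule is independent of the chosen representative \emph{is} the vanishing lemma you are trying to prove. The same unproven strong form of (b) is silently used twice more: in your mollification step for merely Lipschitz $\phi$ (on $f^{-1}(N)$, $N$ the non-Lebesgue points of $\phi'$, the limit only gives the bound $\tnLip(\phi)|\tD f|_w$, not $|\phi'(f)||\tD f|_w$), and in (c) when you give meaning to $s=\phi'(h)$. The repair is to prove the vanishing property directly rather than as a formal corollary of (b): for item (a) only a single level set is needed, and one uses genuinely \emph{flat} $C^1$ truncations (e.g.\ smooth nondecreasing $\psi_\eps$ with $\psi_\eps'=0$ near $c$, $0\le\psi_\eps'\le1$, $\psi_\eps(h)\to h$ in $L^2$), the $C^1$ chain rule giving $|\tD\psi_\eps(h)|_w\le \mathbf 1_{\{|h-c|>\eps/2\}}|\tD h|_w$, and then the $L^2$-lower semicontinuity of $\mathsf{Ch}$ to conclude $\int_{\{h=c\}}|\tD h|_w^2\,\dd\gm=0$; for a general negligible $N$ one must first enlarge $N$ to open sets $A_k\supseteq N$ with $|A_k|\downarrow0$ and use $\phi_k(t)=\int_0^t\mathbf 1_{\erre\setminus A_k}$, which is genuinely locally constant on $A_k$ --- one cannot work with $N$ itself, which is exactly where your argument collapses.

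A secondary point concerns (c): your claim that the inequality ``cannot be read off from the slopes'' and requires the linear differential structure is not accurate, and it leads you to import machinery (the cotangent-module calculus) far heavier than needed, which moreover still rests on the vanishing lemma above. For a $C^1$ contraction $\phi$ and Lipschitz $f,g$, the pointwise slope inequality \emph{does} hold: the first-order expansion $\phi(h(y))-\phi(h(x))=\phi'(h(x))(h(y)-h(x))+o(\sfd(x,y))$ gives $|\tD u|\le(1-s)|\tD f|+s|\tD g|$ and $|\tD v|\le s|\tD f|+(1-s)|\tD g|$ with $s=\phi'(h(x))\in[0,1]$, and then precisely your convexity observation ($s\mapsto|(1-s)a+sb|^2+|sa+(1-s)b|^2$ convex with equal endpoint values) closes the estimate. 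Your counterexample $\phi=(\cdot)^+$ only shows that \emph{non-differentiable} $\phi$ cannot be treated pointwise; those are recovered afterwards by smoothing $\phi$ and passing to weak $L^2$ limits at the level of relaxed gradients. This slope-then-relax scheme is exactly the paper's Lemma \ref{lemma_crucial} plus Lemma \ref{rmk_relax} (take $E(r,s)=\Phi(s-r)$ with $\Phi'=\phi$, suitably normalized, so that $J=\mathrm{Id}-\nabla E$ produces the pair $(f+\phi(g-f),\,g-\phi(g-f))$), and it is also the route of the cited original proof; it avoids any appeal to a differential structure.
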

\begin{proof}
\cite[Prop. 4.8]{AGS14I}
\end{proof}
\subsection{Gradient flow of the Cheeger energy in metric-measure spaces}
The metric-measure counterpart of the Laplacian operator
can be defined in terms of the element of minimal $L^2$-norm in
the subdifferential $\partial\mathsf{Ch}$ of $\mathsf{Ch}$.
$\partial\mathsf{Ch}$ is the multivalued operator in $L^{2}(X,\mathfrak{m})$ defined for all $f\in\mathcal{D}(\mathsf{Ch})$ by the following relation:
\[
\quad l\in\partial\mathsf{Ch}(f)\quad \Longleftrightarrow\quad \int_{X}l(g-f)\,\dd\mathfrak{m}\leq \mathsf{Ch}(g)-\mathsf{Ch}(f)\quad \forall\,g\in L^{2}(X,\mathfrak{m}).\]

\begin{definition}[Metric-measure Laplacian]
\textit{The metric-measure Laplacian $\Delta f$ of $f\in L^2(X,\mathfrak{m})$ is defined for any $f$ such that $\partial\mathsf{Ch}(f)\neq \emptyset$. For those $f$, $-\Delta f$ is the element of minimal $L^2(X,\mathfrak{m})$ norm in $\partial\mathsf{Ch}(f)$. }
\end{definition}
\noindent The domain of $\Delta$ is denoted by $\mcD(\Delta)$ and  is a dense subset of $\mcD(\mathsf{Ch})$. 
\noindent
The metric-measure heat flow can be obtained
by applying the classic theory of gradient flows in Hilbert spaces
\cite{Brezis73} and it enjoys further properties
which have been studied in \cite{AGS14I}.
More refined contraction properties will be proved
in Section \ref{sec:contraction}.
\begin{theorem}[Gradient flow of $\mathsf{Ch}$ in $L^2(X,\gm)$]
For any $f\in L^2(X,\gm)$ there exists a unique locally absolutely
continuous curve
$(0,\infty)\ni t\rightarrow \sfP_t f\in L^2(X,\gm)$ such that $\sfP_t f\rightarrow f$ in $L^2(X,\gm)$ as $t\rightarrow 0$ and \[ 
\frac{\dd}{\dd t} \sfP_t f\in-\partial\mathsf{Ch}(\sfP_t f)\qquad \text{for a.e. }t>0.
\]
The following properties hold:
\begin{enumerate}[(1)]\item 
  The curve $t\mapsto \sfP_t f$  is locally Lipschitz, $\sfP_t f\in\mcD(\Delta)$
  for any $t>0$ and it holds
  \[ \frac{\dd^{+}}{\dd t} \sfP_t f=\Delta \sfP_t f \qquad\forall t>0.
  \]
  \item The curve $t\rightarrow \mathsf{Ch}(\sfP_t f)$ is locally Lipschitz
  in $(0,+\infty)$, infinitesimal at $+\infty$ and continuous in $0$
  if $f\in \mcD(\mathsf{Ch})$.  Its right derivative is given by
  $-\Vert \Delta \sfP_t f\Vert_{L^{2}}^{2}$ for every $t>0$.
  \item The
  family of maps $(\sfP_t)_{t\ge0}$ is a strongly continuous semigroup
  of contractions in $L^2(X,\gm)$ which can be extended in a unique
  way to a strongly continuous semigroup of contractions in every
  $L^p(X,\gm)$, $1\le p<\infty$ (still denoted by $(\sfP_t)_{t\ge0}$)
  thus satisfying
  \begin{equation}
    \label{eq:41}
    \|\sfP_t f-\sfP_t g\|_{L^p(X,\gm)}\le \|f-g\|_{L^p(X,\gm)}\quad
    \text{for every }f,g\in L^p(X,\gm).
  \end{equation}
\end{enumerate}

\end{theorem}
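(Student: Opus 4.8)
The plan is to deduce the entire statement from the classical theory of gradient flows of convex functionals in Hilbert spaces (Brezis--K\=omura, \cite{Brezis73}), applied to the Cheeger energy $\mathsf{Ch}$ on the Hilbert space $H:=L^2(X,\gm)$. Since $\mathsf{Ch}$ is proper, convex and lower semicontinuous with dense domain $\mcD(\mathsf{Ch})$, its subdifferential $\partial\mathsf{Ch}$ is a maximal monotone operator and $\overline{\mcD(\mathsf{Ch})}=H$. The abstract theory then provides, for every $f\in H$, a unique locally absolutely continuous curve $t\mapsto \sfP_t f$ with $\sfP_t f\to f$ in $H$ as $t\downarrow 0$ and $\frac{\dd}{\dd t}\sfP_t f\in-\partial\mathsf{Ch}(\sfP_t f)$ for a.e.\ $t>0$; this settles existence and uniqueness.

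For (1) and (2) I would invoke the regularizing and energy-dissipation properties built into the same theory. For the gradient flow of a convex l.s.c.\ functional the solution is locally Lipschitz on $(0,\infty)$, instantaneously enters $\mcD(\partial\mathsf{Ch})$, and is right-differentiable at every $t>0$ with right derivative equal to the element of minimal norm in $-\partial\mathsf{Ch}(\sfP_t f)$; by our definition of $\Delta$ this reads $\frac{\dd^+}{\dd t}\sfP_t f=\Delta\sfP_t f$, whence $\sfP_t f\in\mcD(\Delta)$. The chain rule for the flow then yields $\frac{\dd^+}{\dd t}\mathsf{Ch}(\sfP_t f)=-\|\Delta\sfP_t f\|_{L^2}^2$, so the energy is nonincreasing and locally Lipschitz; its continuity at $0$ when $f\in\mcD(\mathsf{Ch})$ follows from lower semicontinuity combined with monotonicity, while the infinitesimal behaviour at $+\infty$ comes from integrating the dissipation identity, exploiting the $2$-homogeneity of $\mathsf{Ch}$ through Euler's relation $\sp{\Delta f}{f}=-2\mathsf{Ch}(f)$, which makes $t\mapsto\mathsf{Ch}(\sfP_t f)$ integrable on $(0,\infty)$.

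The genuinely substantive part is (3), and especially the $L^p$-contraction for $p\neq 2$; this is what I expect to be the main obstacle, since here the nonlinearity of the flow is felt and one cannot merely quote linear semigroup theory. The $L^2$-contraction \eqref{eq:41} is immediate from monotonicity of $\partial\mathsf{Ch}$: writing $u_t=\sfP_t f$, $v_t=\sfP_t g$, one has $\frac{\dd}{\dd t}\tfrac12\|u_t-v_t\|_{L^2}^2=\sp{\dot u_t-\dot v_t}{u_t-v_t}\le 0$. For $p\neq 2$ the key is the Markovian structure encoded in Proposition \ref{calculus}(c): integrating that pointwise inequality shows that for every nondecreasing $1$-Lipschitz $\phi$ one has $\mathsf{Ch}(f+\phi(g-f))+\mathsf{Ch}(g-\phi(g-f))\le\mathsf{Ch}(f)+\mathsf{Ch}(g)$. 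Following \cite{AGS14I}, this is precisely the condition guaranteeing that the resolvent $J_\tau:=(I+\tau\,\partial\mathsf{Ch})^{-1}$ is order-preserving and simultaneously an $L^1$- and $L^\infty$-contraction, hence an $L^p$-contraction for all $p\in[1,\infty]$ by interpolation. Passing to the limit in the exponential formula $\sfP_t f=\lim_{n\to\infty}J_{t/n}^n f$ transfers the contraction to the semigroup, and a density argument extends $(\sfP_t)_{t\ge0}$ from $L^2\cap L^p$ to all of $L^p$ with strong continuity. The remaining verifications are routine consequences of the Hilbertian gradient-flow machinery.
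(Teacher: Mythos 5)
Your proposal is correct and takes essentially the same route as the paper, which states this theorem without proof and attributes it exactly to the classical Hilbertian gradient-flow theory of \cite{Brezis73} together with the further properties established in \cite{AGS14I}. In particular, your handling of the only substantive point --- the $L^p$-contraction via the inequality of Proposition \ref{calculus}(c), order-preservation of the resolvents, nonlinear interpolation between $L^1$ and $L^\infty$, and the exponential formula --- is precisely the argument from \cite{AGS14I} that the paper invokes.
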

\subsection{$\RCD(K,\infty)$ metric measure spaces}
In this subsection we briefly recall the definition and some
properties of a class of metric measure spaces which generalize the
notion of Riemannian manifolds with Ricci curvature bounded from
below. This will be the
general setting of the regularization result that we propose in
Section \ref{sec:regularization}, where, indeed, the bound on the curvature plays a direct role. \\
%
On a general metric measure space, the Cheeger energy is not a
quadratic form and this translates into a potential lack of linearity
of its  $L^2$-gradient flow  $(\mathsf{P_t})_{t\geq 0}$.
If we require the Cheeger energy to be quadratic, and hence the heat
flow to be linear, we restrict the choice of the underlying metric
domain to class of metric measure spaces which can be considered
a nonsmooth generalization of Riemannian manifolds:
among them, the so called Bakry-\'Emery curvature condition
can be used to select the class of $\RCD(K,\infty)$ metric measure
spaces
(we refer to \cite{AGS14I,AGS14D} for a complete discussion
and the other important equivalent characterization we mentioned in
the Introduction). As in the previous section,
$(X,\sfd,\gm)$ will denote a complete and separable metric measure
space satisfying the volume growth condition \eqref{eq:80}.
%
\begin{definition}[The $\RCD(K,\infty)$-condition]
  \label{def:RCD}
  $(X,\mathsf{d}, \gm)$
  is a $\RCD(K,\infty)$ metric measure space if  the Cheeger energy is
  quadratic \eqref{eq:63}, 
  every function $f\in \calD(\mathsf{Ch})\cap L^\infty(X,\gm)$
  with $|\tD f|_\www\le 1$ admits a
  $1$-Lipschitz representative (still denoted by $f$)
  and \begin{equation}\label{refinedBE}
    \abs{\tD(\mathsf{P}_t f)}_\www^2\leq \textnormal{e}^{-2Kt}\mathsf{P}_t(\abs{\tD f}_\www^2)\qquad\gm\textnormal{-a.e. in }X.
\end{equation}
\end{definition}
%
\noindent Equation \eqref{refinedBE} is
one of the equivalent formulation of the celebrated
Bakry-\'Emery condition \cite{Bakry1985}, \cite{AGS15}.
Notice that the $\RCD(K,\infty)$ condition implies in particular
that every bounded function $f\in \calD(\mathsf{Ch})$ with
$|\tD f|_\www\in L^\infty(X,\gm)$
has a Lipschitz continuous representative
(identified with $f$) satisfying
\begin{displaymath}
  \sup_X |\tD f|=\tnLip(f,X)\le \big\|\,|\tD f|_\www\,\big\|_{L^\infty(X,\gm)}.
\end{displaymath}
On $\RCD(K,\infty)$ spaces, an even stronger version of
\eqref{refinedBE} holds true,
together with crucial regularization properties which we collect in
the next statement.
\begin{theorem}
  \label{thm:cond_equivalenti}
  Let $(X,\mathsf{d},\gm)$ be a $\RCD(K,\infty)$ space.
  \begin{enumerate}[(1)]
  \item
    For every $f\in L^\infty(X,\gm)$ and $t>0$ the function
    $\sfP_t f$ has a unique continuous representative
    $\tilde\sfP_t f\in \tnLip_b(X)$
    (in the following, with a slight abuse of notation,
    we will identify $\sfP_tf$ with $\tilde\sfP_t
    f$, whenever $f\in L^\infty(X,\gm)$).
  \item
    For every $f\in \calD(\mathsf{Ch})$ with
    $f,\abs{\tD f}_\www\in L^{\infty}(X,\gm)$ and $t>0$
    \begin{equation}\label{refined_BE}
      |\tD \sfP_t f|=|\tD\sfP_t f|_\www
      \ \text{$\gm$-a.e.~in $X$,}\qquad
      \abs{\tD \mathsf{P}_t f}\leq \textnormal{e}^{-Kt}
      \mathsf{P}_t\abs{\tD f}_\www\quad\text{in $X$.}
    \end{equation}
  \item For every $f\in L^\infty(X,\gm)$ and $t>0$
    \begin{equation}\label{key_point} 
      R_K(t)
      \vert\tD \sfP_{t}f\vert^2\leq \sfP_{t}(f^2)-(\sfP_t f)^2\qquad\text{ in }X,
    \end{equation}
    where $R_k$ has been defined in \eqref{eq:66}.
    In particular
    \begin{equation}
      \label{eq:82}
      \sqrt{R_K(t)}\tnLip(\sfP_t f,X)\leq \norm{f}_{L^\infty(X,\gm)}.
    \end{equation}
  \end{enumerate}  
\end{theorem}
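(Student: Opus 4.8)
The plan is to treat the weak Bakry-\'Emery estimate \eqref{refinedBE} as the single analytic engine behind all three assertions, and to organize the argument around the reverse Poincar\'e inequality \eqref{key_point}, which in turn forces the Lipschitz regularization of item (1). First I would establish \eqref{key_point} in terms of the minimal weak gradient $\abs{\tD\sfP_t f}_w$; once $\sfP_t f$ is known to have a bounded weak gradient, the Sobolev-to-Lipschitz property built into the $\RCD(K,\infty)$ structure (Definition \ref{def:RCD} and the subsequent remark) upgrades it to a genuine Lipschitz function and identifies the pointwise slope with the weak gradient, yielding item (1) and the first identity in \eqref{refined_BE}. The second, sharper bound in \eqref{refined_BE} is of a different nature: it is the self-improvement of \eqref{refinedBE} from its $L^2$-quadratic form to the $L^1$-linear form, and this is where I expect the real work to lie.

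For the reverse Poincar\'e inequality I would exploit the \emph{linearity} of $(\sfP_t)_{t\ge0}$ (equivalently, the quadratic character \eqref{eq:63} of $\mathsf{Ch}$), available precisely in the $\RCD$ setting, which makes the carr\'e du champ $\Gamma(u)=\abs{\tD u}_w^2$ bilinear. Fix $t>0$, $f\in L^\infty\cap L^2(X,\gm)$, and consider the interpolation $\psi(s):=\sfP_s\big((\sfP_{t-s}f)^2\big)$ for $s\in[0,t]$, so that $\psi(t)-\psi(0)=\sfP_t(f^2)-(\sfP_t f)^2$. Writing $u_s:=\sfP_{t-s}f$, using $\partial_s u_s=-\Delta u_s$, the commutation $\Delta\sfP_s=\sfP_s\Delta$ and the pointwise identity $\Delta(u^2)-2u\,\Delta u=2\,\Gamma(u)$, I obtain $\psi'(s)=2\,\sfP_s\big(\abs{\tD\sfP_{t-s}f}_w^2\big)$. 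Applying \eqref{refinedBE} to $u_s$ over the time interval of length $s$ in the form $\abs{\tD\sfP_t f}_w^2=\abs{\tD\sfP_s u_s}_w^2\le \mathrm e^{-2Ks}\sfP_s\big(\abs{\tD u_s}_w^2\big)$ gives the lower bound $\psi'(s)\ge 2\,\mathrm e^{2Ks}\abs{\tD\sfP_t f}_w^2$, and integrating over $[0,t]$ with $\int_0^t\mathrm e^{2Ks}\,\dd s=\tfrac12 R_K(t)$ (cf.\ \eqref{eq:66}) yields $\sfP_t(f^2)-(\sfP_t f)^2\ge R_K(t)\,\abs{\tD\sfP_t f}_w^2$. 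The only delicate points are the differentiability of $\psi$ and the legitimacy of these manipulations, which I would justify through the smoothing of the gradient flow ($\sfP_t f\in\calD(\Delta)\cap\calD(\mathsf{Ch})$ for $t>0$) together with a standard $L^2\cap L^\infty$ approximation to cover the case $\gm(X)=+\infty$ (the estimate being independent of the $L^2$ norm).

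From here items (1) and (3) follow quickly. Since the semigroup is order preserving and fixes constants, $f^2\le\norm f_{L^\infty}^2$ propagates to $\sfP_t(f^2)\le\norm f_{L^\infty}^2$, while $(\sfP_t f)^2\ge0$; hence the weak form of \eqref{key_point} gives $\abs{\tD\sfP_t f}_w\le \norm f_{L^\infty}/\sqrt{R_K(t)}$ $\gm$-a.e. Thus $\sfP_t f$ is bounded with bounded weak gradient and, by the $\RCD$ Sobolev-to-Lipschitz property, admits a representative in $\tnLip_b(X)$ with $\tnLip(\sfP_t f,X)\le\norm{\,\abs{\tD\sfP_t f}_w\,}_{L^\infty}$, proving item (1). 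For such Lipschitz functions the identification $\abs{\tD\sfP_t f}=\abs{\tD\sfP_t f}_w$ $\gm$-a.e.\ holds (the first equality in \eqref{refined_BE}: the slope is always a relaxed gradient, so $\abs{\tD\sfP_t f}_w\le\abs{\tD\sfP_t f}$, and the converse is the $\RCD$ calculus of Proposition \ref{calculus}). Consequently \eqref{key_point} upgrades from the weak gradient to the genuine slope, and \eqref{eq:82} is exactly the combination of \eqref{key_point} with $\sfP_t(f^2)\le\norm f_{L^\infty}^2$.

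The remaining and hardest point is the sharp estimate $\abs{\tD\sfP_t f}_w\le\mathrm e^{-Kt}\sfP_t\abs{\tD f}_w$, which does \emph{not} follow from \eqref{refinedBE} by Jensen (that inequality points the wrong way). Here I would run the analogous interpolation on the first power of the gradient, setting $\Phi_\eps(s):=\sfP_s\big(\sqrt{\abs{\tD\sfP_{t-s}f}_w^2+\eps}\big)$ and aiming to show that $s\mapsto\mathrm e^{-Ks}\Phi_\eps(s)$ is non-decreasing up to errors vanishing as $\eps\downarrow0$, which gives $\abs{\tD\sfP_t f}_w=\Phi_0(0)\le\mathrm e^{-Kt}\Phi_0(t)=\mathrm e^{-Kt}\sfP_t\abs{\tD f}_w$. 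Differentiating $\Phi_\eps$ and using the diffusion identity for $\Delta\sqrt{\Gamma(u)+\eps}$ produces, besides a curvature term controlled by $\Gamma_2(u)\ge K\,\Gamma(u)$, a second-order remainder of the form $\Gamma\big(\Gamma(u)\big)/(\Gamma(u)+\eps)^{3/2}$; the crux is the \emph{self-improvement} inequality $\Gamma\big(\Gamma(u)\big)\le 4\,\Gamma(u)\,\Gamma_2(u)$, a Cauchy-Schwarz bound for the Hessian extracted from the Bakry-\'Emery condition, which is exactly what absorbs that remainder after the correct balancing of the two $\Gamma_2$ contributions. This step is genuinely technical in the nonsmooth setting, so I would either carry it out in the weak $\RCD$ formulation or, more economically, invoke the self-improvement of \eqref{refinedBE} established in \cite{AGS15} (see also \cite{Bakry-Gentil-Ledoux14}); combined with the identification $\abs{\tD\sfP_t f}=\abs{\tD\sfP_t f}_w$ above, this completes \eqref{refined_BE} and hence item (2).
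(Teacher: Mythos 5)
Your proposal is correct in strategy, but it takes a genuinely different route from the paper, because the paper does not prove this theorem at all: its proof is pure citation --- item (1) is quoted from \cite[Corollary 4.18]{AGS15}, the first identity in \eqref{refined_BE} from \cite[Theorem 3.17]{AGS15}, the sharp first-power gradient bound from \cite[Corollary 4.3]{Savare14}, and item (3) is deduced from those properties together with \cite[Corollary 2.3(iv)]{AGS15}. What you do instead is reconstruct the underlying arguments: the Bakry--Ledoux interpolation $\psi(s)=\sfP_s\big((\sfP_{t-s}f)^2\big)$, with $\psi'(s)=2\,\sfP_s(\Gamma(\sfP_{t-s}f))\ge 2\,\mathrm e^{2Ks}|\tD\sfP_t f|_w^2$ and $\int_0^t \mathrm e^{2Ks}\,\dd s=\tfrac12 R_K(t)$, is exactly the classical proof of the reverse Poincar\'e inequality behind the cited estimate; deriving item (1) and \eqref{eq:82} from the weak form of \eqref{key_point} plus the Sobolev-to-Lipschitz clause of Definition \ref{def:RCD} is the logical order used in the sources (note the paper presents it in the opposite order, obtaining (3) from (1) and (2)); and your observation that the $L^1$-type bound $|\tD\sfP_t f|\le \mathrm e^{-Kt}\sfP_t|\tD f|_w$ cannot follow from \eqref{refinedBE} by Jensen, and instead needs the self-improvement $\Gamma(\Gamma(u))\le 4\,\Gamma(u)\,\Gamma_2(u)$, is precisely why the paper defers that part to \cite{Savare14} (not \cite{AGS15}, as you suggest). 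Your route buys a self-contained derivation of items (1) and (3) where the paper only cites; the paper's route avoids all the technical work.

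Two steps in your argument are, however, not sound as written. First, the pointwise identity $|\tD\sfP_t f|=|\tD\sfP_t f|_w$ $\gm$-a.e.\ does not follow from Proposition \ref{calculus}, which contains only chain-rule and locality properties; and the Sobolev-to-Lipschitz property yields only the global bound $\tnLip(f,X)\le \big\|\,|\tD f|_w\,\big\|_{L^\infty(X,\gm)}$, not an a.e.\ identification of the slope with the minimal relaxed gradient (the trivial direction is $|\tD\sfP_t f|_w\le|\tD\sfP_t f|$; the converse is a genuine theorem, namely \cite[Theorem 3.17]{AGS15}, whose proof requires more than what you invoke). Second, the manipulations defining $\psi'(s)$ --- in particular the identity $\Delta(u^2)-2u\,\Delta u=2\,\Gamma(u)$ --- require justification in the nonsmooth setting, since $u^2$ need not belong to $\calD(\Delta)$; one must pass through the weak (or measure-valued) formulation of the Laplacian and a suitable approximation, which you acknowledge as ``delicate'' but do not resolve. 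Neither issue undermines the overall strategy --- both are settled in the references on which the paper itself relies --- but as a self-contained proof the proposal has these two holes.
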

\begin{proof}
  Property (1) is a consequence of \cite[Corollary 4.18]{AGS15}.
  The first identity of \eqref{refined_BE} is stated in \cite[Theorem
  3.17]{AGS15}; the second one is stated in \cite[Corollary
  4.3]{Savare14}.
  \eqref{eq:72} is a consequence of the above properties and the estimate
  of \cite[Corollary 2.3(iv)]{AGS15}.
\end{proof}

 \section{Contraction properties for the Heat flow in metric measure spaces}
 \label{sec:contraction}
This section is devoted to some fairly general contraction properties
 of the heat flow in
the metric-measure setting. Our main result concerns the behaviour of
the functional
\begin{subequations}
  \label{sub:E}
  \begin{equation}
    \label{eq:35}
    \mathscr{E}(f,g):=\int_{X}E(f,g)\,\dd\gm,\quad
    f,g\in L^2(X,\gm)
  \end{equation}
  where
  \begin{equation}
    \label{eq:36}
    \text{$E:\erre^2\to \erre\cup\{+\infty\}$ is a proper, l.s.c.~and
      convex function.}
  \end{equation}
  Since $E$ is bouded from below by an affine map,
  when $\gm(X)<\infty$ the integral of \eqref{eq:35} is always well
  defined
  (possibly taking the value $+\infty$).
  In the general case, in order to avoid
  integrability issues, we will also assume that
  \begin{equation}
    \label{eq:37}
    E\text{ is nonnegative, }E(0,0)=0\quad\text{if $\gm(X)=+\infty$}.
  \end{equation}
\end{subequations}
\begin{theorem}\label{contr_convex}
  \nc
  Let $(X,\mathsf{d},\gm)$ be a metric measure space
  with the Heat semigroup
  $(\sfP_t)_{t\geq 0 }$ generated by the Cheeger
energy $\mathsf{Ch}$ in $L^2(X,\gm)$, and let $\mathscr E$ be defined as in
{\upshape (\ref{sub:E}a,b,c)}.
Then, for $f,g\in L^2(X,\gm)$
\begin{displaymath}
  \mathscr{E}(\sfP_t f,\sfP_t g)\leq \mathscr{E}(f,g)\quad
  \text{for every }t\ge0.
\end{displaymath}
\end{theorem}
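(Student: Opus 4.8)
The plan is to reduce the monotonicity of $t\mapsto\mathscr E(\sfP_tf,\sfP_tg)$ to a single differential inequality and then to an \emph{energy inequality} for $\mathsf{Ch}$. Writing $u=\sfP_tf$, $v=\sfP_tg$ (for $t>0$ these lie in $\mcD(\Delta)$ by the gradient-flow theorem, with $\frac{\dd}{\dd t}u=\Delta u$ and $\frac{\dd}{\dd t}v=\Delta v$) and choosing a measurable selection $(p,q)\in\partial E(u,v)$, the chain rule for the convex integrand $E$ gives, formally, $\frac{\dd}{\dd t}\mathscr E(\sfP_tf,\sfP_tg)=\int_X(p\,\Delta u+q\,\Delta v)\,\dd\gm$. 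Thus everything reduces to the \textbf{Key inequality}: for all $u,v\in\mcD(\Delta)$ and every measurable $(p,q)\in\partial E(u,v)$,
\[
\int_X\big(p\,\xi+q\,\eta\big)\,\dd\gm\ge0,\qquad \xi:=-\Delta u\in\partial\mathsf{Ch}(u),\ \ \eta:=-\Delta v\in\partial\mathsf{Ch}(v).
\]
To make this rigorous and dispose of smoothness and integrability issues, I would first replace $E$ by its Moreau--Yosida regularizations $E_\lambda$, which are convex, nonnegative, satisfy $E_\lambda(0,0)=0$, increase to $E$, and have Lipschitz gradient $\nabla E_\lambda=(p,q)$; by monotone convergence it suffices to treat each $E_\lambda$. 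To sidestep the chain rule along the flow, I would argue on a single resolvent step $u=(I+\tau\partial\mathsf{Ch})^{-1}f$, $v=(I+\tau\partial\mathsf{Ch})^{-1}g$: then $\xi=(f-u)/\tau\in\partial\mathsf{Ch}(u)$, $\eta=(g-v)/\tau\in\partial\mathsf{Ch}(v)$, and convexity of $E_\lambda$ yields $\mathscr E_\lambda(f,g)\ge\mathscr E_\lambda(u,v)+\tau\int_X(p\xi+q\eta)\,\dd\gm$, so the Key inequality gives $\mathscr E_\lambda((I+\tau\partial\mathsf{Ch})^{-1}f,(I+\tau\partial\mathsf{Ch})^{-1}g)\le\mathscr E_\lambda(f,g)$; iterating and passing to the limit in the Crandall--Liggett approximation $(I+(t/n)\partial\mathsf{Ch})^{-n}\to\sfP_t$, together with the $L^2$-lower semicontinuity of $\mathscr E_\lambda$ (Fatou), transfers the bound to the semigroup, after which $\lambda\downarrow0$ concludes.

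The Key inequality I would in turn deduce from a \emph{vectorial contraction property} of the Cheeger energy: for every map $\Phi=(\Phi_1,\Phi_2)=\nabla G$ with $G:\erre^2\to\erre$ convex and $0\preceq\mathrm{Hess}\,G\preceq I$ (equivalently, a gradient field whose Jacobian is a symmetric contraction),
\[
\mathsf{Ch}\big(\Phi_1(u,v)\big)+\mathsf{Ch}\big(\Phi_2(u,v)\big)\le\mathsf{Ch}(u)+\mathsf{Ch}(v).\qquad(\dagger)
\]
Granting $(\dagger)$, I fix $\epsilon>0$ small enough that $G:=\tfrac12|\cdot|^2-\epsilon E_\lambda$ satisfies $0\preceq\mathrm{Hess}\,G\preceq I$, and apply it to $h:=u-\epsilon p$, $k:=v-\epsilon q$, which is exactly $\nabla G(u,v)$. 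Combining $(\dagger)$ with the subgradient inequalities $\mathsf{Ch}(h)\ge\mathsf{Ch}(u)+\int_X\xi\,(h-u)\,\dd\gm$ and $\mathsf{Ch}(k)\ge\mathsf{Ch}(v)+\int_X\eta\,(k-v)\,\dd\gm$ gives $\mathsf{Ch}(u)+\mathsf{Ch}(v)\ge\mathsf{Ch}(h)+\mathsf{Ch}(k)\ge\mathsf{Ch}(u)+\mathsf{Ch}(v)-\epsilon\int_X(\xi p+\eta q)\,\dd\gm$, i.e. the Key inequality, with no remainder term.

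The crux — and the step I expect to be the main obstacle — is $(\dagger)$ itself. Proposition \ref{calculus}(c) is precisely the pointwise form of $(\dagger)$ for the sub-family of \emph{sum-preserving} maps $\Phi(r,s)=(r+\phi(s-r),\,s-\phi(s-r))$ with $\phi$ a nondecreasing contraction, whose Jacobian $\left(\begin{smallmatrix}1-\phi' & \phi'\\ \phi' & 1-\phi'\end{smallmatrix}\right)$ is symmetric with eigenvalues $1$ and $1-2\phi'\in[-1,1]$ on the fixed eigenbasis $\{(1,1),(1,-1)\}$; similarly Proposition \ref{calculus}(b) covers coordinatewise monotone contractions, with diagonal Jacobian on $\{(1,0),(0,1)\}$. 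The difficulty is that a general symmetric positive contraction $\mathrm{Hess}\,G$ has eigendirections in an arbitrary, pointwise varying, position, whereas (b) and (c) only supply those two fixed eigenbases. I would therefore aim to establish $(\dagger)$ either by proving a genuine two-variable chain-rule estimate for the minimal relaxed gradient of $\Phi(u,v)$ directly from the relaxation definition — approximating $u,v$ by $\mathsf d$-Lipschitz functions and exploiting that the Jacobian of $\Phi$ is a contraction — or by realizing $\nabla G$ as a limit of finite compositions of the elementary (b)- and (c)-type transformations and passing to the limit in $(\dagger)$. Controlling the relaxed gradient of a nonlinear function of the \emph{pair} $(u,v)$, where no bilinear object ``$\langle\tD u,\tD v\rangle$'' is available because $\mathsf{Ch}$ need not be quadratic, is the genuinely delicate point.
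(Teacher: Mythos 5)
Your architecture reproduces the paper's proof almost exactly: Moreau--Yosida regularization $E_\lambda$, reduction of the whole statement to the integrated inequality $\int_X\big(\partial_1E_\lambda(u,v)\,\Delta u+\partial_2E_\lambda(u,v)\,\Delta v\big)\,\dd\gm\le 0$ (this is the paper's Lemma \ref{Lap_new}), and the derivation of that inequality from the subgradient characterization of $-\Delta$ combined with your vectorial contraction $(\dagger)$ --- which is precisely the paper's Lemma \ref{rmk_relax}, with $J=\mathrm{Id}-\nabla E_\lambda$ playing the role of your $\nabla G$ (your $\epsilon$-rescaling of $E_\lambda$ and the paper's normalization of $\nabla E$ to be $1$-Lipschitz are interchangeable). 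Your one real deviation --- replacing the differentiation of $t\mapsto\mathscr E_\lambda(\sfP_tf,\sfP_tg)$ by a resolvent step plus the exponential formula and lower semicontinuity --- is correct and in fact sidesteps the chain-rule justification along the flow that the paper handles only briefly.

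The genuine gap is that $(\dagger)$ is exactly where all the work of the paper lies, and you leave it as an acknowledged obstacle with two candidate routes rather than a proof. The paper establishes it by your first route (Lemma \ref{lemma_crucial} for bounded Lipschitz pairs, then relaxation and mollification of $E$ in Lemma \ref{rmk_relax}), and the step your sketch does not resolve is the following. Fixing $x$ and choosing sequences $x_i^n\to x$ realizing the two slopes $|\tD g_i|(x)$ of $g_i=\Phi_i(f_1,f_2)$, one linearizes $\Phi$ at $\boldsymbol f(x)$; but the two slopes are attained along two \emph{different, unrelated} sequences, so the increments of $(f_1,f_2)$ along them cannot be assembled into a single vector to which the quadratic-form bound $0\le \boldsymbol z^TA\boldsymbol z\le|\boldsymbol z|^2$, $A=\mathrm D\Phi(\boldsymbol f(x))$, applies. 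Estimating each component separately forces one to replace the entries of $A$ by their absolute values $\hat A_{ij}:=|A_{ij}|$, and the argument survives only thanks to the elementary but dimension-specific Lemma \ref{le:elementary}: for a \emph{symmetric} $2\times 2$ matrix, $0\le \boldsymbol z^TA\boldsymbol z\le|\boldsymbol z|^2$ for all $\boldsymbol z$ implies the same bounds for $\hat A$, hence $\boldsymbol z^T\hat A\boldsymbol w\le|\boldsymbol z|\,|\boldsymbol w|$; the analogous statement fails for general $n\times n$ matrices. Nothing in your outline supplies this device, and your fallback route --- realizing $\nabla G$ as a limit of compositions of the transformations in Proposition \ref{calculus}(b,c) --- looks unworkable, since Jacobians of such compositions are in general no longer symmetric and there is no evident way to generate an arbitrary pointwise-varying field of symmetric contractions from those two fixed eigenbases. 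In short, your diagnosis of where the difficulty sits is exactly right, but the decisive idea (difference quotients plus the $2\times2$ absolute-value lemma) is missing, so the proof as proposed is incomplete at its crux.
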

\noindent We prove some useful lemmas first. The first one shows a
generalization of part $c)$
in Proposition \ref{calculus} and is the core of the proof of the main theorem.
\begin{lemma}\label{lemma_crucial}
Let $(X,\mathsf{d})$ be a metric space, let $E:\erre^2\to \erre$
be a $\textnormal{C}^2$ convex function with $1$-Lipschitz (w.r.t the
Euclidean norm) gradient
$\nabla E:\erre^2\to \erre^2$, and let $J:\erre^2\to \erre^2$ be the map
$J:=Id-\nabla E$.
For every bounded Lipschitz map
$\boldsymbol{f}:=(f_1,f_2):\erre^2\to\erre^2$,
the function $\boldsymbol{g}=(g_1,g_2):=J\circ \boldsymbol{f}$ satisfies
\begin{equation}\label{disug_diff}
  \vert\tD g_1\vert^2(x)+\vert\tD g_2\vert^2(x)\leq
  \abs{\tD f_1}^2(x)+\abs{\tD f_2}^2(x)\quad
  \text{for every }x\in X.
\end{equation}
\end{lemma}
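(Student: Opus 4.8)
The plan is to linearize $J$ at each point and reduce the statement to the fact that the Jacobian of $J$ is, at every point, a symmetric matrix with operator norm at most one; the component-wise slope inequality then follows by a single non-expansiveness estimate. Note that the global fact that $J$ itself is $1$-Lipschitz for the Euclidean norm is \emph{not} enough here, precisely because \eqref{disug_diff} involves the sum of the squared slopes of the two scalar components $g_1,g_2$, and $\abs{\tD\cdot}$ is a genuine $\limsup$ that need not distribute over components.

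First I would record the algebraic properties of the pointwise Jacobian $A(p):=\nabla J(p)=\mathrm{Id}-\nabla^2 E(p)$. Since $E\in\textnormal{C}^2$ is convex, $\nabla^2 E(p)$ is symmetric and positive semidefinite; since $\nabla E$ is $1$-Lipschitz, $\nabla^2 E(p)$ has operator norm $\le 1$, so its eigenvalues lie in $[0,1]$. Hence $A(p)$ is symmetric with eigenvalues in $[0,1]$, and in particular $\norm{A(p)}_{\mathrm{op}}\le 1$.

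Second, I would establish a linearized chain rule for the local slope. Fix $x\in X$ and set $p:=\boldsymbol f(x)$, $A:=A(p)$, $v(y):=\boldsymbol f(y)-p$ and $r(y):=\sfd(x,y)$. Because $J\in\textnormal{C}^1$ and $\boldsymbol f$ is continuous,
\[
  g_i(y)-g_i(x)=\textstyle\sum_j A_{ij}\,v_j(y)+R_i(y),\qquad \abs{R_i(y)}=o\big(\abs{\boldsymbol f(y)-p}\big).
\]
As $\boldsymbol f$ is bounded and $L$-Lipschitz, $\abs{\boldsymbol f(y)-p}\le L\,r(y)$, so $\abs{R_i(y)}/r(y)\to 0$ as $y\to x$. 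Dividing by $r(y)$, applying the triangle inequality and using the subadditivity of $\limsup$ with the nonnegative coefficients $\abs{A_{ij}}$ then yields, for $i=1,2$,
\[
  \abs{\tD g_i}(x)\le \abs{A_{i1}}\,\abs{\tD f_1}(x)+\abs{A_{i2}}\,\abs{\tD f_2}(x).
\]
Writing $d:=(\abs{\tD f_1}(x),\abs{\tD f_2}(x))$ and letting $\tilde A$ be the matrix with entries $\abs{A_{ij}}$, this reads $\abs{\tD g_i}(x)\le (\tilde A d)_i$ with all quantities nonnegative, so $\abs{\tD g_1}^2(x)+\abs{\tD g_2}^2(x)\le \norm{\tilde A d}^2$. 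For a symmetric $2\times 2$ matrix the eigenvalues depend only on the square of the off-diagonal entry, hence $\tilde A$ has the same eigenvalues as $A$ and $\norm{\tilde A}_{\mathrm{op}}\le 1$; therefore $\norm{\tilde A d}^2\le \norm{d}^2=\abs{\tD f_1}^2(x)+\abs{\tD f_2}^2(x)$, which is exactly \eqref{disug_diff}.

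The delicate point is the second step rather than the matrix algebra. The two slopes $\abs{\tD g_1}(x)$ and $\abs{\tD g_2}(x)$ are separate $\limsup$'s, possibly realized along different sequences $y\to x$, and one cannot pass to a common approximating direction. The passage that makes everything work is to bound each component slope by the \emph{linear} expression $\abs{A_{i1}}\abs{\tD f_1}+\abs{A_{i2}}\abs{\tD f_2}$ before combining: once both $\abs{\tD g_i}(x)$ are controlled by the same vector $d$ through the contraction $\tilde A$, the non-distributivity of $\limsup$ over the sum becomes irrelevant and the Euclidean bound is a one-line consequence of $\norm{\tilde A}_{\mathrm{op}}\le 1$. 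I would also be careful to convert the differentiability remainder $\abs{R_i(y)}=o(\abs{\boldsymbol f(y)-p})$ into $o(r(y))$ exactly through the Lipschitz bound on $\boldsymbol f$, which is where the boundedness and Lipschitz hypotheses on $\boldsymbol f$ are used.
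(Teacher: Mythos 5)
Your proof is correct and follows essentially the same route as the paper's: linearize $J$ at $\boldsymbol f(x)$ to obtain the componentwise slope bound $\abs{\tD g_i}(x)\le \sum_j \abs{A_{ij}}\,\abs{\tD f_j}(x)$, and then conclude from the fact that the entrywise absolute-value matrix $\tilde A$ is still a symmetric contraction. The only cosmetic differences are in the final linear algebra: the paper multiplies by the weights $v_i=\abs{\tD g_i}(x)$ and uses the bilinear estimate $\boldsymbol z^T\tilde A\boldsymbol w\le \abs{\boldsymbol z}\,\abs{\boldsymbol w}$ (proved by an explicit characterization of the entries) followed by a Cauchy--Schwarz absorption, whereas you square and sum, invoking $\norm{\tilde A}_{\mathrm{op}}\le 1$ via the characteristic polynomial --- which is valid here since the diagonal entries of the positive semidefinite matrix $A$ are nonnegative, so $\tilde A$ and $A$ indeed share the same spectrum.
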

\begin{proof}
   Since $\nabla^2E$ is positive definite and
  $J$ is $1$-Lipschitz, we observe that $A:=\mathrm D J=I-\nabla^2E$ satisfies
  \begin{equation}
    \label{eq:24}
    0\le \boldsymbol z^T A(\boldsymbol w)
    \boldsymbol z\le |\boldsymbol z|^2\quad
    \text{for every }\boldsymbol w,\, \boldsymbol z\in \erre^2.
  \end{equation}
  For every $x,y\in X$, $x\neq y$, and $f:X\to \mathbb R$ we set
  \begin{displaymath}
    R(f,x,y):=\frac{ \abs{f(x)-f(y)}}{\mathsf{d}(x,y)}\quad
    \text{so that}\quad
    |\tD f|(x)=\limsup_{y\to x}R(f,x,y).
  \end{displaymath}
  Let us now fix $x\in X$;
  it is possible to find two sequences of points $(x_i^n)_{n\in \mathbb
  N}$, $i=1,2$, such that 
\begin{equation}
\lim_{n\to+\infty} R(g_i,x,x_i^n)=\vert \tD g_i\vert(x).\label{eq:25}
\end{equation}
\nc
Taking a linear combination of the difference quotients
$R(g_i,x,x_i^n)$
with the \emph{positive} coefficients $v_i:=\vert \tD g_i\vert(x)$ it holds \[ 
\lim_{n\to +\infty}\sum_i v_i\,R(g_i,x,x_i^n)=\sum_{i}\vert \tD g_i\vert^2(x).
\]
Now, $g_i(x)=J_i(f_1(x), f_2(x))$ and hence \[ 
\lim_{n\to +\infty}v_i R(g_i,x,x_i^n)=\lim_{n\to +\infty}v_i\frac{\abs{J_i(f_1(x),f_2(x))-J_i(f_1(x_i^n),f_2(x_i^n))}}{\mathsf{d}(x,x_i^n)}.
\]
Since $J$ is $\textnormal{C}^1$,
a first order expansion at $\boldsymbol z=\boldsymbol f(x)$
with $\boldsymbol z^n_i:=\boldsymbol f(x_i^n)$ and the Lipschitz
character of $\boldsymbol f$ yield
\begin{align*}
  J(\boldsymbol z^n_i)-
  J(\boldsymbol z)&=A(\boldsymbol z)(\boldsymbol z^n_i-\boldsymbol z)+
                    o(|\boldsymbol z^n-\boldsymbol z|)
                    \\&=
  \partial_1J(\boldsymbol f(x))(f_1(x_i^n)-
   f_1(x))+\partial_2J(\boldsymbol f(x))(f_2(x_i^n)-
  f_2(x)) +o(\mathsf d(x_i^n,x)).
\end{align*}
Estimating the first component $J_1$ of $J$ along the sequence $(x^n_1)_n$
and
the second component $J_2$ of $J$ along $(x^n_2)_n$ we get for $i=1,2$
\begin{align*}
&\lim_{n\to +\infty}\frac{\abs{J_i(f_1(x),f_2(x))-J_i(f_1(x_i^n),f_2(x_i^n))}}{\mathsf{d}(x,x_i^n)}\\
  &=
    \lim_{n\to\infty}
    \Big|\partial_1J_i(f_1(x),f_2(x)) R(f_1,x,x^n_i) +
    %
    \partial_2J_i(f_1(x),f_2(x))R(f_2,x,x^n_i)\Big|\\
  &\le 
    |\partial_1J_i(f_1(x),f_2(x))|
    \limsup_{n\to\infty} R(f_1,x,x_i^n)+
    |\partial_2J_i(f_1(x),f_2(x))| \limsup_{n\to\infty} R(f_2,x,x_i^n)
   \\& \le |\partial_1J_i(f_1(x),f_2(x))| |\tD f_1|(x)+
    |\partial_2J_i(f_1(x),f_2(x))| |\tD f_2|(x)
.\end{align*}
Recalling \eqref{eq:25}, since the coefficients $v_1,v_2$ are
nonnegative,
we get
\begin{align*}
  \sum_{i}\vert \tD g_i\vert^2(x)\leq &
                                        \sum_{i,j}
                                        \hat A_{j,i}(f_1(x),f_2(x))
                                        |\tD f_j|(x) v_i,
\end{align*}
where for every $w\in \mathbb R^2$ $\hat A(\boldsymbol w)$ is the
symmetric matrix defined by 
\begin{displaymath}
  \hat A_{i,j}(\boldsymbol w):=|A_{i,j}(\boldsymbol w)|.
\end{displaymath}
\eqref{eq:24} and the next elementary Lemma yield
\begin{displaymath}
  \sum_{i}\vert \tD g_i\vert^2(x)\le \Big(\sum_i |v_i|^2\Big)^{1/2}
  \Big(\sum_i |\tD f_i|^2(x)\Big)^{1/2}
\end{displaymath}
thus obtaining \eqref{disug_diff}.
\end{proof}
\begin{lemma}
  \label{le:elementary}
  Let $A\in \mathbb R^{2\times 2}$ be a symmetric matrix
  and let $\hat A\in \mathbb R^{2\times 2}$ be defined by
  $\hat A_{ij}:=|A_{ij}|$, $i,j=1,2$.
  If 
  \begin{equation}
    \label{eq:24bis}
    0\le \boldsymbol z^T A
    \boldsymbol z\le |\boldsymbol z|^2\quad
    \text{for every }\boldsymbol z\in \erre^2,
  \end{equation}
  then also $\hat A$ satisfies
  \begin{equation}
    \label{eq:24tris}
    0\le \boldsymbol z^T \hat A
    \boldsymbol z\le |\boldsymbol z|^2\quad
    \text{and}\quad
    \boldsymbol z^T \hat A
    \boldsymbol w\le
    |\boldsymbol z|\,|\boldsymbol w|\quad
    \text{for every }\boldsymbol z,\boldsymbol w\in \erre^2.
  \end{equation}
\end{lemma}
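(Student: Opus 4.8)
The plan is to read the hypothesis \eqref{eq:24bis} for what it is: the statement that $A$ is a positive semidefinite contraction, i.e. $0\preceq A\preceq I$ as symmetric matrices, and then to observe that in dimension two the passage from $A$ to $\hat A$ can be realized by an orthogonal conjugation that preserves both of these properties. This turns the whole lemma into a one-line change of variables.

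Concretely, write $A=\begin{pmatrix} a & b\\ b& c\end{pmatrix}$. Testing \eqref{eq:24bis} with $\boldsymbol z=e_1$ and $\boldsymbol z=e_2$ gives $0\le a\le 1$ and $0\le c\le 1$, so the diagonal of $A$ is unaffected by taking absolute values and $\hat A$ differs from $A$ only through the replacement of the off-diagonal entry $b$ by $|b|$. Set $s:=\sign(b)$ (with the convention $s=1$ when $b=0$) and let $D:=\operatorname{diag}(1,s)$. Then $D$ is a symmetric orthogonal involution, $D^2=I$ and $|D\boldsymbol z|=|\boldsymbol z|$ for all $\boldsymbol z$, and a direct computation gives $\hat A=D\,A\,D$, since $s^2=1$ and $sb=|b|$.

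With this identity both claims follow at once. For every $\boldsymbol z,\boldsymbol w\in\erre^2$ one has $\boldsymbol z^T\hat A\boldsymbol w=(D\boldsymbol z)^T A\,(D\boldsymbol w)$. Taking $\boldsymbol w=\boldsymbol z$ and applying \eqref{eq:24bis} to the vector $D\boldsymbol z$ yields $0\le \boldsymbol z^T\hat A\boldsymbol z=(D\boldsymbol z)^T A(D\boldsymbol z)\le |D\boldsymbol z|^2=|\boldsymbol z|^2$, which is the first part of \eqref{eq:24tris}. For the bilinear estimate I would use that $A\succeq 0$ makes $(\boldsymbol u,\boldsymbol v)\mapsto \boldsymbol u^T A\boldsymbol v$ a nonnegative symmetric bilinear form, so that Cauchy--Schwarz together with $A\preceq I$ gives $\boldsymbol u^T A\boldsymbol v\le (\boldsymbol u^T A\boldsymbol u)^{1/2}(\boldsymbol v^T A\boldsymbol v)^{1/2}\le|\boldsymbol u|\,|\boldsymbol v|$; applying this with $\boldsymbol u=D\boldsymbol z$ and $\boldsymbol v=D\boldsymbol w$ and using $|D\boldsymbol z|=|\boldsymbol z|$, $|D\boldsymbol w|=|\boldsymbol w|$ produces the desired $\boldsymbol z^T\hat A\boldsymbol w\le|\boldsymbol z|\,|\boldsymbol w|$.

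The only genuinely dimension-dependent point --- and hence the place to be careful --- is the representation $\hat A=DAD$: it works because a $2\times2$ symmetric matrix has a single off-diagonal pair, whose sign can always be absorbed by one diagonal sign change $D$. In higher dimension no single diagonal $\pm1$ conjugation would in general turn all off-diagonal entries into their absolute values simultaneously, so the argument is specific to the $2\times2$ situation at hand. Everything else is a routine consequence of the spectral characterization $0\preceq A\preceq I$, and I expect no further obstacle.
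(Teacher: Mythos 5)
Your proof is correct, and for the key first claim it takes a genuinely different route from the paper. The paper proceeds by an explicit entry-wise characterization: it records that a symmetric $A$ satisfies \eqref{eq:24bis} if and only if $0\le A_{11}\le 1$, $0\le A_{22}\le 1$, $A_{12}^2\le A_{11}A_{22}$ (that is, $\det A\ge 0$) and $A_{12}^2\le 1+A_{11}A_{22}-A_{11}-A_{22}$ (that is, $\det(I-A)\ge 0$), and then simply observes that all four inequalities are untouched when $A_{12}$ is replaced by $|A_{12}|$, since the off-diagonal entry only enters through its square. Your argument instead absorbs the sign of the single off-diagonal entry into the orthogonal involution $D=\operatorname{diag}(1,\sign b)$, giving the identity $\hat A = DAD$, so that the two-sided bound $0\preceq A\preceq I$ transfers to $\hat A$ by the isometric change of variables $\boldsymbol z\mapsto D\boldsymbol z$. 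This buys you two things: the proof is fully self-contained (the paper's characterization is only asserted as ``easy to check,'' and verifying it is precisely the small determinant computation you avoid), and it makes transparent why the statement is specific to the $2\times 2$ case, namely that one diagonal sign change can normalize the single off-diagonal pair, whereas in dimension $\ge 3$ the entry-wise absolute value of a positive semidefinite contraction need not remain one. The paper's version is more computational but equally elementary. For the bilinear bound in \eqref{eq:24tris} both arguments invoke the Cauchy--Schwarz inequality for a nonnegative symmetric bilinear form; the paper applies it to $\hat A$ after the first part, while you apply it to $A$ and conjugate by $D$ --- an immaterial difference.
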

\begin{proof}
  It is easy to check that a symmetric matrix $A$ satisfies
  $0\le \boldsymbol z^T A
  \boldsymbol z\le |\boldsymbol z|^2$ for every $\boldsymbol z\in \mathbb R^2$ 
  \eqref{eq:24bis} if and only if
  \begin{equation}
    \label{eq:27}
    0\le A_{11}\le 1,\quad 0\le A_{22}\le 1,\quad A_{12}^2\le
    A_{11}A_{22},\quad
    A_{12}^2\le
    1+A_{11}A_{22}-A_{11}-A_{22},
  \end{equation}
  and it is clear that \eqref{eq:27} is preserved if
  we replace the coefficients $A_{ij}$ by $|A_{ij}|$.
  The second inequality of \eqref{eq:24tris} follows immediately by
  the first one and 
  the Cauchy-Schwartz inequality, since
  \begin{displaymath}
    \boldsymbol z^T \hat A
    \boldsymbol w\le
    \big(\boldsymbol z^T \hat A
    \boldsymbol z\big)^{1/2}
    \big(\boldsymbol w^T \hat A
    \boldsymbol w\big)^{1/2}\le
    |\boldsymbol z|\,|\boldsymbol w|.
  \end{displaymath}
\end{proof}
\begin{lemma}\label{rmk_relax}
  Let $E:\erre^2\to \erre$
be a $\textnormal{C}^{1,1}$ convex function 
as in \eqref{eq:36} and \eqref{eq:37} with $1$-Lipschitz (w.r.t the
Euclidean norm) gradient
$\nabla E:\erre^2\to \erre^2$, and let $J:\erre^2\to \erre^2$ be the map
$J:=Id-\nabla E$.
For every couple bounded Lipschitz map
$\boldsymbol{f}:=(f_1,f_2)$ with $f_i\in \rmW^{1,2}(X,\mathsf d,\mathfrak
m)$,
the components $g_i$ of  $\boldsymbol{g}:=J\circ \boldsymbol{f}$
belong to $ \rmW^{1,2}(X,\mathsf d,\mathfrak
m)$ and satisfy
\begin{equation}\label{disug_diff2}
  \vert\tD g_1\vert_\www^2(x)+\vert\tD g_2\vert_\www^2(x)\leq
  \abs{\tD f_1}_\www^2(x)+\abs{\tD f_2}_\www^2(x)\quad
  \text{for $\mathfrak m$-a.e.~}x\in X.
\end{equation} 
\end{lemma}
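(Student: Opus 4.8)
The goal of Lemma~\ref{rmk_relax} is to pass the pointwise inequality \eqref{disug_diff} of Lemma~\ref{lemma_crucial}, which is stated for the \emph{upper} local gradient $|\tD f|$ along bounded \emph{Lipschitz} functions, to the corresponding inequality \eqref{disug_diff2} for the \emph{minimal relaxed} gradient $|\tD f|_\www$ along functions $f_i\in \rmW^{1,2}(X,\sfd,\gm)$. The natural strategy is approximation. The plan is to approximate each $f_i$ in $L^2(X,\gm)$ by bounded Lipschitz functions $f_i^n$ whose upper gradients $|\tD f_i^n|$ are \emph{controlled} in $L^2$, apply the already-established pointwise estimate \eqref{disug_diff} to the Lipschitz pair $\boldsymbol f^n$, and then take a weak limit, using lower semicontinuity to recover \eqref{disug_diff2} for the limit.

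\textbf{Step 1: approximation with gradient control.}
Since each $f_i\in \rmW^{1,2}(X,\sfd,\gm)=\calD(\mathsf{Ch})$ (and is assumed bounded and Lipschitz in the hypothesis, though the key object is the relaxed gradient), the very definition of minimal relaxed gradient supplies bounded $\sfd$-Lipschitz functions $f_i^n\in L^2(X,\gm)$ with $f_i^n\to f_i$ in $L^2(X,\gm)$ and $|\tD f_i^n|\wto \tilde G_i$ weakly in $L^2(X,\gm)$, where $\tilde G_i\le |\tD f_i|_\www$ $\gm$-a.e.; in fact one may arrange $\tilde G_i=|\tD f_i|_\www$. I would record for later use that, by weak $L^2$ convergence and lower semicontinuity of the $L^2$ norm under weak convergence, $\|\tilde G_i\|_{L^2}\le \liminf_n \||\tD f_i^n|\|_{L^2}$, so the approximating upper gradients stay bounded in $L^2$.

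\textbf{Step 2: apply Lemma~\ref{lemma_crucial} to the Lipschitz level.}
For each fixed $n$, set $\boldsymbol f^n:=(f_1^n,f_2^n)$ and $\boldsymbol g^n:=J\circ \boldsymbol f^n=(g_1^n,g_2^n)$. Because $J=\mathrm{Id}-\nabla E$ is $1$-Lipschitz and $\boldsymbol f^n$ is bounded Lipschitz, $\boldsymbol g^n$ is bounded Lipschitz as well, and Lemma~\ref{lemma_crucial} gives the \emph{pointwise} bound
\begin{displaymath}
  |\tD g_1^n|^2(x)+|\tD g_2^n|^2(x)\le
  |\tD f_1^n|^2(x)+|\tD f_2^n|^2(x)\qquad\text{for every }x\in X.
\end{displaymath}
Integrating against $\gm$ shows $\||\tD g_i^n|\|_{L^2}$ is bounded uniformly in $n$, so each $g_i^n$ has a relaxed gradient and $g_i^n\to g_i$ in $L^2(X,\gm)$ (continuity of $J$ and dominated convergence, using boundedness of $\boldsymbol f^n$). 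Up to a subsequence, $|\tD g_i^n|\wto \hat G_i$ weakly in $L^2(X,\gm)$, and by the definition of relaxed gradient $|\tD g_i|_\www\le \hat G_i$ $\gm$-a.e., in particular $g_i\in \rmW^{1,2}(X,\sfd,\gm)$.

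\textbf{Step 3: pass to the limit.}
The main subtlety — and the step I expect to be the real obstacle — is that the pointwise inequality of Step~2 is \emph{quadratic} in the gradients, so it does not survive weak $L^2$ convergence by a naive linear argument. I would handle this by testing the integrated inequality against an arbitrary nonnegative $\varphi\in L^\infty(X,\gm)$ and combining weak lower semicontinuity of the convex functional $G\mapsto \int_X \varphi\,G^2\,\dd\gm$ on the left with a matching upper control on the right. Concretely, weak $L^2$ convergence $|\tD g_i^n|\wto \hat G_i$ yields
\begin{displaymath}
  \int_X \varphi\big(|\tD g_1|_\www^2+|\tD g_2|_\www^2\big)\,\dd\gm
  \le \liminf_{n\to\infty}\int_X \varphi\big(|\tD g_1^n|^2+|\tD g_2^n|^2\big)\,\dd\gm,
\end{displaymath}
while the right-hand sides converge to $\int_X \varphi\big(\tilde G_1^2+\tilde G_2^2\big)\,\dd\gm = \int_X \varphi\big(|\tD f_1|_\www^2+|\tD f_2|_\www^2\big)\,\dd\gm$. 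Since $\varphi\ge 0$ is arbitrary, the localized inequality \eqref{disug_diff2} follows for $\gm$-a.e.~$x$. The genuine difficulty is ensuring the quadratic weak-limit inequalities line up in the correct direction; the convexity of $G\mapsto G^2$ gives lower semicontinuity precisely on the left-hand (smaller) side, which is exactly what is needed, while on the right one uses that the approximants were chosen to \emph{converge} (not merely to be bounded) to the relaxed gradients of $f_i$. This is the point where the careful choice of approximating sequence in Step~1 — arranging $\tilde G_i=|\tD f_i|_\www$ with norm convergence rather than mere weak convergence — becomes essential.
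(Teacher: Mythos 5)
Your proof has a genuine gap at Step 2: you apply Lemma \ref{lemma_crucial} directly to the function $E$ of the present lemma, but Lemma \ref{lemma_crucial} assumes $E$ is of class $\rmC^2$, whereas here $E$ is only $\rmC^{1,1}$. This is not a cosmetic mismatch. The proof of Lemma \ref{lemma_crucial} rests on the first-order Taylor expansion $J(\boldsymbol z^n_i)-J(\boldsymbol z)=A(\boldsymbol z)(\boldsymbol z^n_i-\boldsymbol z)+o(|\boldsymbol z^n_i-\boldsymbol z|)$ at the point $\boldsymbol z=\boldsymbol f(x)$, i.e.\ on differentiability of $\nabla E$ at every point of the range of $\boldsymbol f$. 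For a $\rmC^{1,1}$ convex function, $\nabla E$ is differentiable only at Lebesgue-almost every point of $\erre^2$, and nothing prevents the image $\boldsymbol f^n(X)$ from lying inside the exceptional null set; working $\gm$-a.e.\ in $x$ does not help either, since the push-forward of $\gm$ under $\boldsymbol f^n$ need not be absolutely continuous with respect to the Lebesgue measure on $\erre^2$. Hence the pointwise bound you invoke for $\boldsymbol g^n=J\circ\boldsymbol f^n$ is not justified as stated.

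The paper closes exactly this gap by a preliminary regularization of $E$: it mollifies the gradient, setting $E_n$ as in \eqref{eq:29}, renormalized so that $\nabla E_n(0)=0$ (needed when $\gm(X)=+\infty$, so that $J_n\circ\boldsymbol f$ stays in $L^2$), applies Lemma \ref{lemma_crucial} to the smooth $E_n$ with the Lipschitz pair fixed, and then passes to the limit $n\to\infty$ using the uniform convergence $J_n\to J$, weak $L^2$ compactness of the slopes $|\tD g_{n,i}|$, and a Cauchy--Schwarz argument to handle the quadratic structure. In other words, the difficulty you correctly single out in your Step 3 occurs \emph{twice}: once for the approximation in $E$ (which you skipped) and once for the approximation in $\boldsymbol f$ (which you carried out). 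Your Steps 1 and 3 are sound and reproduce the paper's second stage --- the choice of optimal Lipschitz approximations with strong $L^2$ convergence of the slopes to the minimal relaxed gradients, and a limit passage in which your weak-lower-semicontinuity argument via convexity of $G\mapsto\int_X\varphi\,G^2\,\dd\gm$ is a valid alternative to the paper's Cauchy--Schwarz computation on arbitrary measurable sets. But without the intermediate mollification of $E$ the chain of pointwise inequalities never gets started, so the proof as written is incomplete; it can be repaired by inserting the $E_n$-approximation and one further limit passage of the same type.
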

\begin{proof}
  Let us consider the case when $\mathfrak m(X)=+\infty$
  (the case of a finite measure is simper, and it follows by
  obvious modifications of the arguments below):
  notice that \eqref{eq:37} yields $\nabla E(0,0)=0$.

  Let us first notice that $|J(\boldsymbol f)|\le 2|\boldsymbol
  f|$
  so
  that for every $f_i\in L^2(X,\mathsf d,\mathfrak m)$
  the functions $g_i$ belong to $L^2(X,\mathsf d,\mathfrak m)$ as well.
  
  We first prove that
  \begin{equation}\label{disug_diff3}
  \vert\tD g_1\vert_\www^2(x)+\vert\tD g_2\vert_\www^2(x)\leq
  \abs{\tD f_1}^2(x)+\abs{\tD f_2}^2(x)\quad
  \text{for $\mathfrak m$-a.e.~}x\in X,
\end{equation}
whenever $f_1,f_2$ are bounded and Lipschitz and
$E$ is of class $C^{1,1}$. To this aim, it is sufficient to regularize
$\nabla E$ e.g.~by convolution with a family of smooth kernels
$\kappa_n:\mathbb R^2\to [0, +\infty)$, $n\in \mathbb N$ satisfying
\begin{displaymath}
  \kappa\in \mathrm C^\infty_c(\erre^2),\quad \kappa\ge0,\quad
  \kappa(-z)=\kappa(z),\
  \int_{\erre^2}\kappa\,\dd x=1,\quad
  \kappa_n(z):=n^2\kappa(n z)
  \quad z\in \erre^2.
\end{displaymath}
We then set
\begin{equation}
  \label{eq:29}
  \begin{aligned}
    E_n(x):={}&\int_{\erre^2}\Big(E(x-z)-x\cdot \nabla E(-z)\Big)\kappa_n(z)\,\dd z,\\
    \nabla
    E_n(x)={}&\int_{\erre^2}\Big(\nabla E(x-z)-\nabla E(-z)\Big)\kappa_n(z)\,\dd z,\\
    J_n(x):={}&x-\nabla E_n(x),\quad
    \boldsymbol g_n:=J_n\circ \boldsymbol f.
  \end{aligned}
\end{equation}
Applying Lemma \ref{lemma_crucial} we
get
\begin{displaymath}
  |\tD g_{n,1}|^2(x)+|\tD g_{n,2}|^2(x)\le
  |\tD f_1|^2(x)+|\tD f_2|^2(x).
\end{displaymath}
Since
\begin{align*}
  |J_n(x)-J(x)|&\le |\nabla E_n(x)-\nabla E(x)|
                 \\&\le
  \int_{\erre^2}\Big(\big|\nabla E(x-z)-\nabla E(x)\big|+\big|\nabla E(0)-\nabla
  E(-z)\big|\Big)\kappa_n(z)
                 \,\dd z
                 \\&\le 2 \int_{\erre^2}|z|\kappa_n(z)
                 \,\dd z=\frac 2n \int_{\erre^2}|z|\kappa(z)
                 \,\dd z,
\end{align*}
passing to the limit as $n\to\infty$ we have $g_{n,i}\to g_i$ in
$L^2(X,\mathfrak m)$; up to the extraction of
a suitable subsequence (not relabelled) we can also assume that
\begin{displaymath}
  |\tD g_{n,1}|\rightharpoonup G_1,\quad |\tD g_{n,2}|\rightharpoonup G_2\quad
  \text{weakly in $L^2(X,\mathfrak m)$ as $n\to\infty$}.
\end{displaymath}
We claim that
\begin{equation}
  \label{eq:32}
  G_1^2+G_2^2\le |\tD f_1|^2+|\tD f_2|^2
  \quad\text{$\mathfrak m$-a.e.~in $X$}.
\end{equation}
In fact, for an arbitrary measurable set $A\subset X$ 
we have
\begin{align*}
  \int_A \Big(G_1^2+G_2^2\Big)\,\dd\mathfrak m
  &=
    \lim_{n\to\infty}
    \int_{A} \Big(|\tD g_{n,1}|G_1+|\tD g_{n,2}|G_2\Big)\,\dd\mathfrak
    m
  \\&\le
  \Big(\int_A |\tD f_1|^2+|\tD f_2|^2\,\dd\mathfrak m\Big)^{1/2}
  \Big(\int_A \Big(G_1^2+G_2^2\Big)\,\dd\mathfrak m\Big)^{1/2}
\end{align*}
so that for every measurable set $A\subset X$
\begin{displaymath}
  \int_A \Big(G_1^2+G_2^2\Big)\,\dd\mathfrak m\le
  \int_A \Big(|\tD f_1|^2+|\tD f_2|^2\Big)\,\dd\mathfrak m.
\end{displaymath}
Since $|\tD g_i|_\www\le G_i$, \eqref{eq:32} yields \eqref{disug_diff3}.

\eqref{disug_diff2} then follows by \eqref{disug_diff3} by a similar
argument:
we select optimal sequences $(f_{i,n})_{n}$ of
bounded Lipschitz functions converging to $f_i$ in $L^2(X,\mathfrak
m)$
such that
\begin{displaymath}
  |\tD f_{i,n}|\to |\tD f_i|_\www\quad\text{in }L^2(X,\mathfrak m),\quad i=1,2,
\end{displaymath}
and we consider the corresponding sequences
$\boldsymbol g_n=J\circ \boldsymbol f_n$, converging to
$\boldsymbol g=J\circ\boldsymbol f$ in $L^2(X,\mathfrak m)$.
We then pass to the limit in the inequality
\begin{displaymath}
  \vert\tD g_{1,n}\vert_\www^2(x)+\vert\tD g_{2,n}\vert_\www^2(x)\leq
  \abs{\tD f_{1,n}}^2(x)+\abs{\tD f_{2,n}}^2(x)\quad
  \text{for $\mathfrak m$-a.e.~}x\in X.
\end{displaymath}
%
\end{proof}
\noindent Next lemma focuses on a useful property of the metric
Laplacian which relies on the estimate that we have just proved.
\nc
\begin{lemma}\label{Lap_new}
  If $f,g\in\mcD(\Delta)$ and $E:\erre^2\to\erre$ is a
  $\textnormal{C}^{1,1}$
  convex function satisfying \eqref{eq:37}, 
then \begin{equation}\label{Lapl3}
\int_{X}(\partial_1 E(f,g)\Delta f+\partial_2 E(f,g)\Delta g)\,\dd\gm\leq 0.
\end{equation}
\end{lemma}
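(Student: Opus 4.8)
The plan is to derive \eqref{Lapl3} from the pointwise gradient contraction of Lemma~\ref{rmk_relax} combined with the variational inequality defining $\partial\mathsf{Ch}$. A preliminary rescaling disposes of the size of the Lipschitz constant of $\nabla E$: since $E$ is $\mathrm C^{1,1}$, $\nabla E$ is $L_0$-Lipschitz for some $L_0\ge0$; setting $L:=\max(1,L_0)>0$ and $\tilde E:=E/L$, the function $\tilde E$ is still $\mathrm C^{1,1}$ convex, still satisfies \eqref{eq:36}--\eqref{eq:37}, and now has $1$-Lipschitz gradient $\nabla\tilde E=\nabla E/L$. As dividing the left-hand side of \eqref{Lapl3} by the positive factor $L$ does not affect its sign, I may assume from the outset that $\nabla E$ is $1$-Lipschitz. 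Then the map $J:=\mathrm{Id}-\nabla E$ and the pair $\boldsymbol g:=(g_1,g_2):=J\circ(f,g)$, i.e.
\[
  g_1:=f-\partial_1E(f,g),\qquad g_2:=g-\partial_2E(f,g),
\]
fall exactly within the scope of Lemma~\ref{rmk_relax}.

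Because $f,g\in\mcD(\Delta)\subset\mcD(\mathsf{Ch})=\rmW^{1,2}(X,\sfd,\gm)$ and $J$ has at most linear growth (so that $g_1,g_2\in L^2(X,\gm)$, exactly as in the proof of Lemma~\ref{rmk_relax}), that lemma yields $g_1,g_2\in\rmW^{1,2}(X,\sfd,\gm)$ together with the pointwise bound \eqref{disug_diff2}. Integrating \eqref{disug_diff2} over $X$ and recalling the integral representation $\mathsf{Ch}(h)=\frac12\int_X\abs{\tD h}_\www^2\,\dd\gm$ gives the energy inequality
\[
  \mathsf{Ch}(g_1)+\mathsf{Ch}(g_2)\le \mathsf{Ch}(f)+\mathsf{Ch}(g).
\]

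The second ingredient is the inequality characterizing $-\Delta f\in\partial\mathsf{Ch}(f)$, namely $\int_X(-\Delta f)(h-f)\,\dd\gm\le\mathsf{Ch}(h)-\mathsf{Ch}(f)$ for every $h\in L^2(X,\gm)$. Testing it with $h=g_1$ and using $\partial_1E(f,g)=f-g_1$ gives
\[
  \int_X\partial_1E(f,g)\,\Delta f\,\dd\gm=\int_X(-\Delta f)(g_1-f)\,\dd\gm\le\mathsf{Ch}(g_1)-\mathsf{Ch}(f),
\]
and symmetrically, testing the inequality for $-\Delta g\in\partial\mathsf{Ch}(g)$ with $h=g_2$ and using $\partial_2E(f,g)=g-g_2$ gives $\int_X\partial_2E(f,g)\,\Delta g\,\dd\gm\le\mathsf{Ch}(g_2)-\mathsf{Ch}(g)$. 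Summing the two estimates and inserting the energy inequality of the previous paragraph produces
\[
  \int_X\big(\partial_1E(f,g)\,\Delta f+\partial_2E(f,g)\,\Delta g\big)\,\dd\gm\le\big(\mathsf{Ch}(g_1)+\mathsf{Ch}(g_2)\big)-\big(\mathsf{Ch}(f)+\mathsf{Ch}(g)\big)\le0,
\]
which is precisely \eqref{Lapl3}. The main obstacle, and the only step carrying geometric content, is the energy contraction $\mathsf{Ch}(g_1)+\mathsf{Ch}(g_2)\le\mathsf{Ch}(f)+\mathsf{Ch}(g)$, i.e.\ Lemma~\ref{rmk_relax}, which ultimately rests on the matrix inequality \eqref{eq:24} for $A=\mathrm{Id}-\nabla^2E$ and the sign bookkeeping of Lemma~\ref{le:elementary}; once the nonexpansiveness of $J$ on the Cheeger energy is secured, the remaining duality between $-\Delta$ and $\partial\mathsf{Ch}$ is soft, the only care being the $L^2$-integrability of $g_1,g_2$ and the reduction to a $1$-Lipschitz gradient.
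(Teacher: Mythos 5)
Your proof is correct and follows essentially the same route as the paper's: reduce to a $1$-Lipschitz gradient, observe the $L^2$-integrability of $\partial_i E(f,g)$, test the subdifferential inequality for $-\Delta f\in\partial\mathsf{Ch}(f)$ (resp.\ $-\Delta g\in\partial\mathsf{Ch}(g)$) with $g_1=f-\partial_1E(f,g)$ (resp.\ $g_2=g-\partial_2E(f,g)$), and close the argument with the energy contraction of Lemma~\ref{rmk_relax}. The only difference is presentational: you spell out the rescaling behind the paper's ``it is not restrictive to assume $E$ is $1$-Lipschitz'' and state the integrated form $\mathsf{Ch}(g_1)+\mathsf{Ch}(g_2)\le\mathsf{Ch}(f)+\mathsf{Ch}(g)$ explicitly, which the paper leaves implicit.
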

\begin{proof}
  It is not restrictive to assume that $E$ is $1$-Lipschitz.
  As we observed in the proof of Lemma \ref{rmk_relax},
  we also note that $\partial_i E(f,g)$ belongs to $L^2(X,\gm)$,
  since when $\mathfrak m(X)=+\infty$ \eqref{eq:37} yields
  $\partial_i E(0,0)$;
  therefore the integral in \eqref{Lapl3} is well defined. 
Recall that \[ 
l\in\partial\mathsf{Ch}(\varphi)\quad \Longleftrightarrow\quad \int_{X}l(\psi-\varphi)\,\dd\gm\leq \mathsf{Ch}(\psi)-\mathsf{Ch}(\varphi)\quad\text{for every }\psi\in L^2(X,\gm)
\]   
and that $-\Delta \varphi\in\partial\mathsf{Ch}(\varphi)$. 
Hence taking in our case $\varphi=f$ and $\psi=f-\partial_1 E(f,g)$ we get \[\begin{split} 
\int_{X}\partial_1 E(f,g)\Delta f\,\dd\gm=\int_{X}-\Delta f(\psi-\varphi)\,\dd\gm&\leq
 \mathsf{Ch}(\psi)-\mathsf{Ch}(\varphi)\\&=\mathsf{Ch}(f-\partial_1 E(f,g))-\mathsf{Ch}(f),
\end{split}\]
and similarly \[ 
\int_{X}\partial_2 E(f,g)\Delta g\,\dd\gm\leq \mathsf{Ch}(g-\partial_2 E(f,g))-\mathsf{Ch}(g).
\]
By definition of the Cheeger functional and Lemma \ref{rmk_relax} we obtain \eqref{Lapl3}.
\end{proof}
\noindent
With the previously developed tools we can conclude
the proof of Theorem \ref{contr_convex}.\begin{proof}
Let us set $f_t=\sfP_t f$ and $g_t=\sfP_t g$. Assume first  that $E$
is $\textnormal{C}^{1,1}$ with
Lipschitz gradient $\nabla E$, so that $E$ has at most quadratic
growth. Recalling that $t\mapsto f_t,g_t$ are differentiable as
$L^2$-valued maps, we get
\begin{align*}
  \frac{\dd}{\dd t}\mathscr{E}(f_t,g_t)
  &=\int_{X}\frac{\dd}{\dd t}E( f_t,g_t)\,\dd\gm
    =\Big(\int_{X}\partial_1E(f_t,g_t)\Delta_{\mathsf{d},\gm}f_t+\partial_2E(f_t,g_t)\Delta_{\mathsf{d},\gm}g_t
    \Big)\,\dd\gm
  \\&=
  \int_{X}
  \Big(\partial_1E(f_t,g_t)\Delta_{\mathsf{d},\gm}f_t+\partial_2E(f_t,g_t)\Delta_{\mathsf{d},\gm}g_t
  \Big)\,\dd\gm\leq 0
\end{align*}
thanks to \eqref{Lapl3}.
We thus obtain
\begin{equation}
\mathscr{E}(\sfP_t f,\sfP_t g)\leq \mathscr{E}(f,g)\quad\text{for
  every }t\ge0.\label{eq:31}
\end{equation}
In the general case, we apply \eqref{eq:31} to
the functional $\mathscr E_\lambda$ associated to the Yosida approximation
$E_\lambda$ of $E$,
\begin{equation}
  \label{eq:38}
  E_\lambda(r,s):=\inf_{(r',s')\in \mathbb
    R^2}\frac1{2\lambda}\Big((r'-r)^2+(s'-s)^2\Big)+E(r',s')\quad
  (r,s)\in \mathbb R^2,\ \lambda>0.
\end{equation}
It is well known \cite{Brezis73} that $E_\lambda$ is convex of class $C^{1,1}$ with
Lipschitz gradient $\nabla E_\lambda$;
moreover, if \eqref{eq:37} holds, then also $E_\lambda$ is nonnegative
and it is immediate to check from \eqref{eq:38} that
$E_\lambda(0,0)=0$. Since $E_\lambda\le E$,
\eqref{eq:31} then yields
\begin{displaymath}
  \mathscr{E}_\lambda(\sfP_t f,\sfP_t g)\leq \mathscr{E}_\lambda(f,g)
  \le \mathscr E(f,g)\quad\text{for
  every }t\ge0,\ \lambda>0.
\end{displaymath}
We can eventually pass to the limit as
$\lambda\downarrow0 $ and applying Beppo Levi monotone convergence
theorem,
since $E_\lambda(r,s)\uparrow E(r,s)$ as $\lambda\downarrow0$
for every $r,s\in \mathbb R^2$.
\nc
\end{proof}
\noindent A few particular cases follow as corollaries of the main
result. The first one states the contraction in the Hellinger metric
 for measures which are absolutely continuous w.r.t.~$\gm$:
with a slight abuse of notation,
for every $f,g\in L^1(X,\gm)$, $f,g\ge0$, we will set
\begin{displaymath}
  \hed^p_p(f,g):=\hed_p(f\gm,g\gm)=\int_X\Big|f^{1/p}-g^{1/p}\Big|^p\,\dd\gm.
\end{displaymath}
\begin{corollary}\label{cor:contrHell}
  For every nonnegative $f,g\in L^1(X,\gm)$
  we have
  \begin{equation}\label{eq:42}
    \hed_p(\sfP_t f,\sfP_t g)\leq \hed_p(f,g)\quad\text{for every }t\ge0.
\end{equation} 
\end{corollary}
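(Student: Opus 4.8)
The plan is to recognize the $p$-Hellinger functional as a convex integral functional of the type treated in Theorem~\ref{contr_convex}, and then to remove the extra integrability assumption by truncation. Concretely, I would take the integrand
\[
  E(r,s):=\begin{cases} |r^{1/p}-s^{1/p}|^p & \text{if }r,s\ge 0,\\ +\infty & \text{otherwise,}\end{cases}
\]
which is the perspective function $H_p$ of \eqref{eq:12} extended by $+\infty$ outside the nonnegative quadrant. Since $H_p$ is convex, lower semicontinuous and $1$-homogeneous on $[0,\infty)^2$, and the quadrant is closed and convex, this extension is proper, convex, lower semicontinuous, nonnegative and satisfies $E(0,0)=0$; thus it meets the requirements \eqref{eq:36}--\eqref{eq:37} for every value of $\gm(X)$. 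Because $(\sfP_t)_{t\ge0}$ is order preserving and fixes the null function, $f,g\ge 0$ forces $\sfP_t f,\sfP_t g\ge 0$, so the associated functional $\mathscr E$ evaluates to $\mathscr E(f,g)=\hed_p^p(f\gm,g\gm)$ on all the pairs that occur below.

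First I would settle the case $f,g\in L^1\cap L^2(X,\gm)$, $f,g\ge 0$. Here Theorem~\ref{contr_convex} applies verbatim and yields $\mathscr E(\sfP_t f,\sfP_t g)\le\mathscr E(f,g)$, that is $\hed_p^p(\sfP_t f,\sfP_t g)\le\hed_p^p(f,g)$, which is \eqref{eq:42} for such $f,g$.

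To reach general nonnegative $f,g\in L^1(X,\gm)$, I would truncate: set $f_n:=\min(f,n)$, $g_n:=\min(g,n)$, which are nonnegative, bounded, belong to $L^1\cap L^2(X,\gm)$ (since $f_n^2\le n f_n\le n f$), and converge to $f,g$ in $L^1(X,\gm)$ by dominated convergence. The base case gives $\hed_p(\sfP_t f_n,\sfP_t g_n)\le\hed_p(f_n,g_n)$, and it remains to pass to the limit on both sides. For this I would use the $L^1$-continuity of $\hed_p$: combining the triangle inequality for the distance $\hed_p$ with the bound $\hed_p^p(u\gm,v\gm)\le\hed_1(u\gm,v\gm)=\|u-v\|_{L^1(X,\gm)}$ from \eqref{hell_tot_var} shows that $\hed_p(f_n,g_n)\to\hed_p(f,g)$. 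For the left-hand side I would invoke the $L^1$-contraction \eqref{eq:41} with $p=1$, so that $\sfP_t f_n\to\sfP_t f$ and $\sfP_t g_n\to\sfP_t g$ in $L^1(X,\gm)$, and apply the same continuity bound to get $\hed_p(\sfP_t f_n,\sfP_t g_n)\to\hed_p(\sfP_t f,\sfP_t g)$. Letting $n\to\infty$ then yields \eqref{eq:42}.

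The computations are all routine once the integrand is correctly identified; the only points that demand care are the extension of $H_p$ to a globally convex lower semicontinuous integrand and the order-preservation step guaranteeing $\sfP_t f,\sfP_t g\ge 0$, so that $\mathscr E$ genuinely reproduces $\hed_p^p$ along the flow. I expect the $L^1$-approximation to be the only mildly technical part, and it is handled uniformly in $p\in[1,\infty)$ by the inequality $\hed_p^p\le\hed_1$.
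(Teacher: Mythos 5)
Your proof is correct and follows essentially the same route as the paper: apply Theorem~\ref{contr_convex} to the integrand $E(r,s)=|r^{1/p}-s^{1/p}|^p$ (extended by $+\infty$ outside the nonnegative quadrant) for $f,g\in L^1\cap L^2(X,\gm)$, then pass to general nonnegative $L^1$ data by approximation using the $L^1$-contraction \eqref{eq:41}. Your truncation argument and the continuity estimate via $\hed_p^p\le\hed_1$ simply spell out the details that the paper leaves implicit.
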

\begin{proof}
  It is sufficient to prove \eqref{eq:42} for every couple of
  nonnegative functions $f,g\in L^1\cap L^2(X,\gm)$
  and then argue by approximation using \eqref{eq:41} for $p=1$.
  We can then apply Theorem \ref{contr_convex}
  with the function
  $E:\mathbb{R}^2\rightarrow \mathbb{R}\cup\{+\infty\}$ given by
  \begin{displaymath}
    E(r,s):=
    \begin{cases}
      \abs{r^{1/p}-s^{1/p}}^p&\text{if }r,s\ge0,\\
      +\infty&\text{otherwise}.
    \end{cases}
  \end{displaymath}
\end{proof}
\noindent More generally, the same contraction result holds true for
any Csisz\`ar divergence;
recalling the discussion of Section \ref{sub:entropy}
and keeping the same notation of \eqref{eq:5}, \eqref{eq:3},
\eqref{eq:4} and Definition \ref{def:Csiszar}, we first set
\begin{displaymath}
  \mathscr F(f|g):=
  \mathscr{F}(f\gm\mid g\gm)=
  \int_X H(f,g)\,\dd\gm\quad
  \text{for every nonnegative }f,g\in L^1(X,\gm).
\end{displaymath}
\begin{corollary}
  Let $\mathscr{F}$ be a Csisz\`ar divergence as in
  Definition \ref{def:Csiszar}.
  Then, for every nonnegative $f,g\in L^1(X,\gm)$,
  \begin{displaymath}
    \mathscr{F}(\sfP_t f\mid \sfP_t g )\leq \mathscr{F}(f\mid g ).
\end{displaymath}
\end{corollary}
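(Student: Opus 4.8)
The plan is to recognize the statement as a direct instance of Theorem \ref{contr_convex}. By \eqref{eq:4}, \eqref{eq:8} and Definition \ref{def:Csiszar}, for nonnegative $f,g$ one has $\mathscr F(f\mid g)=\int_X H(f,g)\,\dd\gm$, where the perspective function $H\colon[0,\infty)^2\to[0,+\infty]$ is lower semicontinuous, convex, positively $1$-homogeneous and vanishes on the diagonal, $H(r,r)=0$. I would therefore set $E:=H$ on the closed quadrant $[0,\infty)^2$ and $E:=+\infty$ elsewhere. Since $[0,\infty)^2$ is closed and convex and $H$ is convex and lower semicontinuous on it, this extension is a proper, l.s.c.\ and convex function on $\erre^2$; moreover $E\ge 0$ and $E(0,0)=H(0,0)=0$, so $E$ complies with the hypotheses \eqref{eq:36}--\eqref{eq:37} of Theorem \ref{contr_convex}.

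For nonnegative $f,g\in L^1\cap L^2(X,\gm)$ the semigroup is order preserving, hence $\sfP_t f,\sfP_t g\ge 0$ and $E$ agrees with $H$ at both $(f,g)$ and $(\sfP_t f,\sfP_t g)$ $\gm$-a.e. Consequently $\mathscr E(f,g)=\mathscr F(f\mid g)$ and $\mathscr E(\sfP_t f,\sfP_t g)=\mathscr F(\sfP_t f\mid\sfP_t g)$, so Theorem \ref{contr_convex} already yields the conclusion in this restricted class, exactly as was done for $\hed_p$ in Corollary \ref{cor:contrHell}.

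It then remains to pass from $L^1\cap L^2$ to arbitrary nonnegative $f,g\in L^1(X,\gm)$. I would fix an increasing sequence of sets $B_n\uparrow X$ with $\gm(B_n)<\infty$ (taking $B_n=X$ if $\gm(X)<\infty$) and set $f_n:=(f\wedge n)\mathbf 1_{B_n}$ and $g_n:=(g\wedge n)\mathbf 1_{B_n}$, so that $f_n,g_n$ are nonnegative, belong to $L^1\cap L^\infty\subset L^2(X,\gm)$, and converge to $f,g$ in $L^1$. The key point is the monotonicity $\mathscr F(f_n\mid g_n)\le\mathscr F(f\mid g)$: for each fixed $s\ge 0$ the convex map $r\mapsto H(r,s)$ attains its minimal value $0$ at $r=s$ and is therefore non-decreasing on $[s,\infty)$, with the symmetric statement in the second variable; discussing the four cases according to whether $a$ and $b$ exceed the level $n$ then gives $H(a\wedge n,b\wedge n)\le H(a,b)$ pointwise, while $H(0,0)=0$ on $B_n^c$. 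Finally $\mathscr F$ is lower semicontinuous for $L^1$ convergence, being, by the dual representation of Theorem \ref{dualformula_entropy}, a supremum of functionals $(u,v)\mapsto\int_X(\phi u+\psi v)\,\dd\gm$ with $\phi,\psi$ bounded. Combined with $\sfP_t f_n\to\sfP_t f$ and $\sfP_t g_n\to\sfP_t g$ in $L^1$ from \eqref{eq:41}, this yields $\mathscr F(\sfP_t f\mid\sfP_t g)\le\liminf_n\mathscr F(\sfP_t f_n\mid\sfP_t g_n)\le\liminf_n\mathscr F(f_n\mid g_n)\le\mathscr F(f\mid g)$.

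I expect no deep obstacle here, since the whole analytic content is already packaged in Theorem \ref{contr_convex}; the only points demanding care are the verification that the extended integrand $E$ satisfies \eqref{eq:36}--\eqref{eq:37} and the elementary diagonal-monotonicity estimate $H(a\wedge n,b\wedge n)\le H(a,b)$ that makes the $L^1$ approximation work cleanly in spite of $\mathscr F$ being only lower semicontinuous rather than continuous.
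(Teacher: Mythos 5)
Your proof is correct and takes essentially the same route as the paper: the paper's own (two-line) proof likewise applies Theorem \ref{contr_convex} to the perspective function $H$ of \eqref{eq:3}, extended by $+\infty$ outside the quadrant $[0,\infty)^2$, exactly as you do. Your truncation argument $H(a\wedge n,b\wedge n)\le H(a,b)$ together with the dual lower semicontinuity of $\mathscr F$ under $L^1$ convergence is sound and fills in the passage from $L^1\cap L^2$ to general nonnegative $L^1$ data, a step the paper leaves implicit (it is only sketched, via \eqref{eq:41}, in the preceding Hellinger corollary).
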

\begin{proof}
  Recalling \eqref{eq:11}, it is sufficient to apply Theorem
  \ref{contr_convex}
  to the integral functional associated to the function $H$ of
  \eqref{eq:3},
  extended to $+\infty$ if $r<0$ or $s<0$.
  
\end{proof}
\nc

\section{Regularizing properties of the Heat flow in $\RCD$ metric
  measure
spaces}
\label{sec:regularization}
\GGG\noindent In the previous section we have shown contraction results
involving
convex functionals
and metric heat flows in metric measure spaces, thus covering the case
of nonlinear flows in Finsler-like geometries.

In the linear case, the Hellinger contraction \eqref{eq:42}
can also be proved by a different approach,
based on the dual dynamic formulation of the Hellinger distance
that we have discussed in \ref{prop:helldyndual}.
We first explain this technique in the simple case of a submarkovian
operator $\sfP$ on the set of bounded measurable functions and we
will
then show how to extend this approach to prove new regularization
results for the Heat semigroup in $\RCD$ metric measure spaces.

\subsection{Hellinger contraction for submarkovian operators}
Let $(\Omega,\mathscr B)$ be a measurable space and
let $\sfP:\mathrm B_b(\Omega)\to\mathrm B_b(\Omega)$
be a linear submarkovian operator
\cite[Chap.~IX, Sect. 1]{Dellacherie-Meyer88}:
this means that for every bounded measurable maps $f,f_n\in \mathrm B_b(\Omega)$
\begin{subequations}
  \begin{align}
    \label{eq:44}
    0\le f\le 1\quad \Rightarrow\quad 0\le \sfP f\le1,\\
    \label{eq:45}
    f_n\ge 0,\ f_n\downarrow 0\text{ as }n\to\infty\quad\Rightarrow\quad
    \sfP f_n\downarrow0,
  \end{align}
\end{subequations}
where convergence in \eqref{eq:45} has to be intented pointwise
everywhere.
Notice that for every $x,y\in \Omega$
\begin{align*}
  0&\le \sfP\Big((f-\sfP f(y))^2\Big)(x)=
     \sfP f^2(x)-2\sfP f(y)\sfP f(x)+(\sfP f(y))^2\sfP 1
     \\&\le
  \sfP f^2(x)-\big(\sfP f(x)\big)^2+\big(\sfP f(x)-\sfP f(y)\big)^2
\end{align*}
so that choosing $x=y$ we get the Jensen's inequality
\begin{equation}\label{eq:47}
  (\sfP f)^2\le \sfP f^2.
\end{equation}
We can define the adjoint operator $\sfP^*$ acting on $\calM(\Omega)$
by the formula
\begin{displaymath}
  \int_\Omega f\,\dd \sfP^*\mu:=\int_\Omega \sfP f\,\dd\mu\quad
  \text{for every }f\in \mathrm B_b(\Omega).
\end{displaymath}
The next result could also be derived by a more refined Jensen inequality for
submarkovian operator. Here we want to highlight the role of the dual
dynamic
point of view.
\begin{proposition}\label{contr_Hell_thm}
  Let $(\Omega,\mathscr B)$ be a measure space and
  let $\sfP$ be a linear submarkovian operator in $\mathrm
  B_b(\Omega)$.
  Then, for any $\mu_0,\mu_1\in \calM(\Omega)$
  \begin{equation}\label{contr_Hell} 
    \hed_2(\sfP^* \mu_0, \sfP^* \mu_1)\leq \hed_2(\mu_0,\mu_1).
\end{equation}
\end{proposition}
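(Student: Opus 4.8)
The plan is to run the dual dynamic formulation of the Hellinger distance from Proposition~\ref{prop:helldyndual} specialized to $p=q=2$, for which the differential constraint $\partial_t\zeta_t+(p-1)\zeta_t^q\le0$ reads
\[
  \hed_2^2(\mu_0,\mu_1)=\sup\Big\{\int_\Omega\zeta_1\,\dd\mu_1-\int_\Omega\zeta_0\,\dd\mu_0:\ \zeta\in\textnormal{C}^1([0,1],\mathrm B_b(\Omega)),\ \partial_t\zeta_t+\zeta_t^2\le0\Big\}.
\]
The key observation is that the adjoint action of $\sfP$ on measures is dual to the direct action of $\sfP$ on the test subsolutions: for any $\zeta$ admissible for the pair $(\sfP^*\mu_0,\sfP^*\mu_1)$, the definition of $\sfP^*$ gives
\[
  \int_\Omega\zeta_1\,\dd\sfP^*\mu_1-\int_\Omega\zeta_0\,\dd\sfP^*\mu_0=\int_\Omega\sfP\zeta_1\,\dd\mu_1-\int_\Omega\sfP\zeta_0\,\dd\mu_0.
\]
Thus, setting $\tilde\zeta_t:=\sfP\zeta_t$, it suffices to prove that $\tilde\zeta$ is an admissible competitor for $(\mu_0,\mu_1)$; the supremum over $\zeta$ will then yield \eqref{contr_Hell}.

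Before that, I would record the elementary properties of $\sfP$ that are needed. From \eqref{eq:44} and linearity one obtains that $\sfP$ is \emph{positive}, i.e.\ $f\ge0\Rightarrow\sfP f\ge0$ (rescale $f$ by $\|f\|_\infty$), and that $\sfP 1\le1$; combining these with $-\|f\|_\infty\le f\le\|f\|_\infty$ shows $\|\sfP f\|_\infty\le\|f\|_\infty$, so $\sfP$ is a bounded linear operator on $(\mathrm B_b(\Omega),\|\cdot\|_\infty)$. Consequently $t\mapsto\tilde\zeta_t=\sfP\zeta_t$ lies in $\textnormal{C}^1([0,1],\mathrm B_b(\Omega))$ and, by linearity and continuity, the time derivative commutes with $\sfP$, namely $\partial_t\tilde\zeta_t=\sfP(\partial_t\zeta_t)$.

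The core step is the verification of the subsolution inequality for $\tilde\zeta$. Since $\zeta$ satisfies $\partial_t\zeta_t\le-\zeta_t^2$ pointwise, applying the positive (hence monotone) operator $\sfP$ and then the Jensen inequality \eqref{eq:47} gives
\[
  \partial_t\tilde\zeta_t=\sfP(\partial_t\zeta_t)\le-\sfP(\zeta_t^2)\le-\big(\sfP\zeta_t\big)^2=-\tilde\zeta_t^2,
\]
that is $\partial_t\tilde\zeta_t+\tilde\zeta_t^2\le0$. Hence $\tilde\zeta$ is admissible for $(\mu_0,\mu_1)$, and combining the two displays above we conclude
\[
  \int_\Omega\zeta_1\,\dd\sfP^*\mu_1-\int_\Omega\zeta_0\,\dd\sfP^*\mu_0=\int_\Omega\tilde\zeta_1\,\dd\mu_1-\int_\Omega\tilde\zeta_0\,\dd\mu_0\le\hed_2^2(\mu_0,\mu_1).
\]
Taking the supremum over all admissible $\zeta$ and using the dual formula for $(\sfP^*\mu_0,\sfP^*\mu_1)$ proves $\hed_2^2(\sfP^*\mu_0,\sfP^*\mu_1)\le\hed_2^2(\mu_0,\mu_1)$. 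The only delicate point is the preservation of the admissible class under the map $\zeta\mapsto\sfP\zeta$, and this is exactly where the two structural features of a submarkovian operator enter: monotonicity carries the differential inequality through $\sfP$, while Jensen's inequality $(\sfP\zeta_t)^2\le\sfP(\zeta_t^2)$ absorbs the quadratic term. No curvature bound or linearity of the heat flow beyond these is used.
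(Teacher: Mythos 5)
Your proposal is correct and follows essentially the same route as the paper's proof: both pass to the dual dynamic formulation \eqref{hell_dyn_dual} with $p=q=2$, push subsolutions of $\partial_t\zeta_t+\zeta_t^2\le 0$ through $\sfP$ using its monotonicity together with the Jensen inequality \eqref{eq:47}, and conclude via the adjoint relation and a supremum over admissible $\zeta$. Your write-up merely makes explicit a few preliminary facts (positivity, $\|\sfP f\|_\infty\le\|f\|_\infty$, commutation of $\sfP$ with the time derivative) that the paper treats briefly.
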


\begin{proof}
Let us consider $(\zeta_s)_{s\in [0,1]}\in \text{C}^1([0,1],\mathrm B_b(\Omega))$ a solution of \begin{equation}\label{eq_dual_hell} 
\partial_s\zeta_s+\zeta_s^{2}\leq 0\quad\text{ in }\Omega\times [0,1].
\end{equation}
We apply the map $\sfP$ to this solution; since the linear map $\sfP$ is continuous
with respect to the supremum norm in $\mathrm B_b(\Omega)$,
$(\sfP\zeta_s)_s\in\textnormal{C}^1([0,1],\mathrm B_b(\Omega))$.
Moreover, from \eqref{eq:47}  applied to $\zeta_s$
it follows that $s\to \sfP\zeta_s$ is also a subsolution to
\eqref{eq_dual_hell}:
\[ 
\partial_s \sfP\zeta_s+(\sfP\zeta_s)^2\leq
\sfP\partial_s\zeta_s+\sfP(\zeta_{s}^{2})
=\sfP(\partial_s\zeta_s+\zeta_s^2)\leq 0,
\]
since $\sfP$ is positivity preserving.
Then, recalling the formulation \eqref{hell_dyn_dual} of the Hellinger
distance, we have
\[
  \int_{\Omega}\zeta_1\,\dd(\sfP^*\mu_1)-\int_{\Omega}\zeta_0\,\dd(\sfP^*\mu_0)
  =\int_{\Omega}\sfP\zeta_1\,\dd \mu_1-\int_{\Omega}\sfP\zeta_0\,\dd
  \mu_0
  \leq\hed^2(\mu_0,\mu_1).
\]
Taking the supremum of the left hand side with respect to all the subsolutions of
\eqref{eq_dual_hell} and applying \eqref{hell_dyn_dual} once more, 
we eventually get \eqref{contr_Hell}.
\end{proof}
\begin{remark}
  The same argument combined with the $p$-Jensen inequality for $\sfP$
  yields
  \[ 
\hed_p(\sfP^*\mu_0, \sfP\mu_1)\leq \hed_p(\mu_0,\mu_1),
\]
for every $p\in [1,+\infty)$.
The proof can also be extended to submarkovian operators in
$L^1(\Omega,\gm)$ with respect
to a given reference measure $\gm\in \calM(\Omega)$,
obtaining in this case an Hellinger estimate for measures absolutely
continuous w.r.t.~$\gm$.
\end{remark}
\subsection{Regularization $\sfW_p$- $\mathsf{He}_p$ for $p\in [1,2]$}

Let us now focus on the regularization estimates for the particular
class of Markovian operators provided by the
heat semigroup $(\sfP_t)_{t\geq 0}$ in
a metric measure space $(X,\sfd,\mathfrak{m})$ satisfying the
$\RCD(K,\infty)$ condition.
%
Since $(\sfP_t)_{t\ge0}$
maps $\mathrm C_b(X)$ into $\mathrm C_b(X)$,
we can use \eqref{eq:51} to define
the adjoint heat semigroup $(\sfP^*_{t})_{t\geq 0}$ on arbitrary
positive and finite
measure of $\calM(X)$ (see \cite[Section 3.2]{AGS15} for more details).

\begin{theorem}\label{reg_1_th}
Let $(X,\mathsf{d},\gm)$ be a $\RCD(K,\infty)$ metric measure space
and
\GGG $p\in [1,2]$.
\nc
Then, for every $\mu_0,\mu_1\in\calP_p(X)$ \begin{equation}\label{reg_1}
\hed_p(\sfP_t^*\mu_0,\sfP_t^*\mu_1)\leq \frac{1}{p (R_{K}(t))^{1/2}}\sfW_p(\mu_0,\mu_1) \qquad \textnormal{for all }t>0,
\end{equation}
where $R_{K}$ has been defined in \eqref{eq:66}.
\end{theorem}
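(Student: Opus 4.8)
The plan is to run the dual dynamic strategy of Proposition~\ref{contr_Hell_thm} one step further, trading the zeroth-order reaction term of the Hellinger Hamilton--Jacobi equation for the first-order term of the Wasserstein one by means of the Bakry--\'Emery estimate \eqref{key_point}. Concretely, I would combine the dynamic dual formula \eqref{hell_dyn_dual} for $\hed_p$, the dynamic dual formula \eqref{dyn_Wass} for $\sfW_p$, and the adjoint identity \eqref{eq:51}. Working first with $p\in(1,2]$ and a fixed admissible subsolution $\zeta\in \rmC^1([0,1],\mathrm B_b(X))$ of $\partial_s\zeta_s+(p-1)\zeta_s^q\le0$, the identity \eqref{eq:51} gives
\[
  \int_X\zeta_1\,\dd(\sfP_t^*\mu_1)-\int_X\zeta_0\,\dd(\sfP_t^*\mu_0)
  =\int_X\sfP_t\zeta_1\,\dd\mu_1-\int_X\sfP_t\zeta_0\,\dd\mu_0 ,
\]
so it suffices to show that a suitable rescaling $\eta_s:=\lambda\,\sfP_t\zeta_s$, with $\lambda=\lambda(t)>0$ to be chosen, is a subsolution of the $\sfW_p$ equation $\partial_s\eta_s+\tfrac1q|\tD\eta_s|^q\le0$; plugging this into \eqref{dyn_Wass} and taking the supremum over $\zeta$ will then produce the estimate.

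The heart of the argument is the verification of this $\sfW_p$-subsolution property. Since $\sfP_t$ is linear and order preserving, applying it to $\partial_s\zeta_s\le-(p-1)\zeta_s^q$ yields $\partial_s\eta_s\le-\lambda(p-1)\,\sfP_t(\zeta_s^q)$. For the gradient term I would use \eqref{key_point}, which after discarding the nonnegative correction $(\sfP_t\zeta_s)^2$ gives $|\tD\sfP_t\zeta_s|^2\le R_K(t)^{-1}\sfP_t(\zeta_s^2)$; raising to the power $q/2$ and invoking the convexity of $r\mapsto r^{q/2}$ for $q\ge2$ (Jensen's inequality for the Markov operator $\sfP_t$) gives
\[
  |\tD\eta_s|^q=\lambda^q|\tD\sfP_t\zeta_s|^q
  \le \lambda^q R_K(t)^{-q/2}\big(\sfP_t(\zeta_s^2)\big)^{q/2}
  \le \lambda^q R_K(t)^{-q/2}\,\sfP_t(|\zeta_s|^q).
\]
This is exactly where the restriction $p\in[1,2]$, i.e.\ $q\ge2$, is used. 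Combining the two bounds, $\partial_s\eta_s+\tfrac1q|\tD\eta_s|^q$ is dominated by $\sfP_t(|\zeta_s|^q)\big(-\lambda(p-1)+\tfrac{\lambda^{q}}{q}R_K(t)^{-q/2}\big)$, which is $\le0$ as soon as $\lambda^{q-1}\le q(p-1)R_K(t)^{q/2}$. Using the conjugacy relations $q(p-1)=p$ and $(q-1)(p-1)=1$, the choice $\lambda=p^{\,p-1}R_K(t)^{p/2}$ makes $\eta$ an admissible $\sfW_p$-subsolution.

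With this $\lambda$, formula \eqref{dyn_Wass} gives $\int_X\eta_1\,\dd\mu_1-\int_X\eta_0\,\dd\mu_0\le\tfrac1p\sfW_p^p(\mu_0,\mu_1)$, i.e.\ $\int_X\sfP_t\zeta_1\,\dd\mu_1-\int_X\sfP_t\zeta_0\,\dd\mu_0\le\tfrac{1}{p\lambda}\sfW_p^p(\mu_0,\mu_1)$. Taking the supremum over all Hellinger subsolutions $\zeta$ and recalling the displayed identity together with \eqref{hell_dyn_dual} applied to $\sfP_t^*\mu_0,\sfP_t^*\mu_1$, I obtain $\hed_p^p(\sfP_t^*\mu_0,\sfP_t^*\mu_1)\le\tfrac{1}{p\lambda}\sfW_p^p(\mu_0,\mu_1)=\tfrac{1}{p^p R_K(t)^{p/2}}\sfW_p^p(\mu_0,\mu_1)$, and taking $p$-th roots gives precisely the constant $\tfrac{1}{p\,R_K(t)^{1/2}}$ of \eqref{reg_1}. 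The case $p=1$ should then follow by letting $p\downarrow1$.

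I expect the main obstacle to be the rigorous verification that $\eta$ is a subsolution in the class demanded by \eqref{dyn_Wass}. Two points are delicate. First, regularity: one must check that $s\mapsto\sfP_t\zeta_s$ is $\rmC^1$ with values in $\tnLip_b(X)$, which relies on the $\RCD$ smoothing of $\sfP_t$ (Theorem~\ref{thm:cond_equivalenti}, in particular the $L^\infty$-to-Lipschitz bound \eqref{eq:82}) and on the continuity of $\sfP_t$ on bounded functions. Second, and more substantially, the bookkeeping of the sign convention in the power $\zeta_s^q$: the time-derivative estimate naturally produces $\sfP_t(\zeta_s^q)$ while the gradient estimate produces $\sfP_t(|\zeta_s|^q)$, and these coincide only where $\zeta_s\ge0$, whereas the optimal Hellinger subsolutions have genuinely mixed sign. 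Reconciling the two bounds — presumably by a truncation or approximation on a suitable dense class of test subsolutions before passing to the limit — is the technical crux. Finally, a preliminary reduction to measures with bounded densities, approximating a general $\mu_i\in\calP_p(X)$ and using lower semicontinuity of $\hed_p$ and continuity of $\sfW_p$, should justify working with the smooth objects above.
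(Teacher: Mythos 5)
Your argument is, in substance, the paper's own proof: the same dual--dynamic scheme (apply $\sfP_t$ to a Hellinger subsolution, discard the nonpositive term $-(\sfP_t\zeta_s)^2$ in \eqref{key_point}, raise to the power $q/2\ge 1$ and use Jensen's inequality for $\sfP_t$, then conclude via \eqref{eq:51}, \eqref{dyn_Wass} and \eqref{hell_dyn_dual}), and it produces the same constant. The only cosmetic difference is where the scaling sits: you rescale the subsolution, $\eta_s=\lambda\,\sfP_t\zeta_s$ with $\lambda=p^{p-1}R_K(t)^{p/2}$, whereas the paper rescales the Hamilton--Jacobi constraint (its \eqref{eq:49}, with weight $a(t)=p^pR_K(t)^{p/2}=p\lambda$); these are equivalent bookkeeping. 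Your treatment of $p=1$ by letting $p\downarrow 1$ is also one of the two routes the paper indicates, except that one must first reduce to measures of bounded support, since $\mu_i\in\calP_1(X)$ need not belong to $\calP_p(X)$ for $p>1$.

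Concerning the point you single out as the technical crux --- that the time-derivative bound produces $\sfP_t(\zeta_s^q)$ with the signed power while the gradient bound produces $\sfP_t(|\zeta_s|^q)$ --- the resolution is not a truncation argument, and truncation in fact cannot work: negative values of $\zeta_0$ are essential in \eqref{hell_dyn_dual} (for instance, to realize $\hed_p^p(\mu_0,0)=\mu_0(X)$ one needs $\zeta_0$ close to $-1$). The correct resolution is that the reaction term in \eqref{hell_dyn_dual} must be read as the genuine power $|\zeta_s|^q$, so that subsolutions are nonincreasing in $s$. This reading is forced by the proof of Proposition \ref{prop:helldyndual} itself: the terminal function $\zeta_1=\zeta_0\big(1+\zeta_0^{q-1}\big)^{-(p-1)}$ is the time-one solution of $\partial_s\zeta_s+(p-1)|\zeta_s|^q=0$ (on negative values it decreases), and the ODE comparison invoked there yields $\zeta(1,\cdot)\le\zeta_1$ precisely for subsolutions of that equation. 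With the literal signed-power reading, formula \eqref{hell_dyn_dual} would even be false: the spatially constant curves $\zeta_s\equiv c_s$ with $c_0<0$ and $\dot c_s=(p-1)|c_s|^q>0$ would be admissible and would give a strictly positive supremum when $\mu_0=\mu_1$. Once the reaction term is $|\zeta_s|^q$, order preservation gives $\partial_s\sfP_t\zeta_s\le-(p-1)\sfP_t(|\zeta_s|^q)$, both of your estimates involve the same quantity $\sfP_t(|\zeta_s|^q)$, and they combine exactly as you wrote, with no mixed-sign difficulty and no approximation needed. Your remaining concern, the $\rmC^1([0,1],\tnLip_b(X))$ regularity of $s\mapsto\sfP_t\zeta_s$, is settled by \eqref{eq:82} together with the $L^\infty$ contraction, which make $\sfP_t$ a bounded linear operator from $\mathrm B_b(X)$ into $\tnLip_b(X)$.
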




\begin{proof}
Let us set $\mathscr{C}^1(\mathrm B_b):=\textnormal{C}^1([0,1],\mathrm
B_b(X))$ and
$\mathscr{C}^1(\textnormal{Lip}_b):=\textnormal{C}^1([0,1],\textnormal{Lip}_b(X))$
to shorten the notation.
We will consider the case $p>1$; the case $p=1$ follows directly from
\eqref{eq:82}
by using the dual characterization of the Kantorovich distance
$\sfW_1$, or by approximating $\mu_0,\mu_1$ by measures with bounded
support (thus in $\calP_{p_0}(X)$ for every $p_0\in [1,\infty[$)
and then passing to the limit in \eqref{reg_1} as
$p\downarrow1$.

The dual dynamic formulations \eqref{hell_dyn_dual} and
\eqref{dyn_Wass}
(recall that a $\RCD$ space is a length space)
we know \[ 
\hed_p^{p}(\mu_0,\mu_1)=\sup \Big\{\int_{\Omega}\zeta_1\,\dd\mu_1-\int_{\Omega}\zeta_0\,\dd\mu_0:\text{ }\zeta\in\mathscr{C}^1(\mathrm B_b),\quad\partial_s\zeta_s+\frac{\zeta_s^q}{q-1} \leq 0\Big\}
\] and \[ 
  \frac 1p
  \sfW_p^p(\mu_0,\mu_1)=\sup\Big\{\int_X\zeta_1\,\dd\mu_1-\int_X\zeta_0\,\dd\mu_0:\,\,\,\zeta\in \mathscr{C}^1(\textnormal{Lip}_b),\quad\partial_s\zeta_s+\frac{1}{q}\abs{\tD \zeta_s}^q\leq 0 \Big\}.
\]
A simple rescaling argument, replacing $\zeta$ by $\frac pa \zeta$,
shows that  for $a>0$
\begin{equation}
\frac{1}{a}\sfW_{p}^{p}(\mu_0,\mu_1)=\sup\Big\{\int_{X}\zeta_1\,\dd\mu_1-\int_{X}\zeta_0\,\dd\mu_0:\text{
  }\zeta\in\mathscr{C}^1(\textnormal{Lip}_b),
  \ \partial_s\zeta_s+\frac{a^{q-1}}{q\,p^{q-1}}\abs{ \tD
    \zeta_s}^q\leq 0\Big\}.
  \label{eq:49}
\end{equation}
%
Now, take $\zeta\in \textnormal{C}^1([0,1], \mathrm B_b(X))$ such that
$\partial_s\zeta_s+\frac{1}{q-1}\zeta_s^q\leq 0$.
We apply the order preserving semigroup $(\sfP_t)_{t\geq 0}$ to $(\zeta_s)_s$ and we
get 
\begin{equation}
\label{sost}
\partial_s\sfP_t\zeta_s+\frac{1}{q-1}\sfP_t\zeta_s^q\leq
0.
\end{equation}
The Lipschitz regularization property
stated in Theorem \ref{thm:cond_equivalenti} ensures that
$\sfP_t(\zeta_s)\in\tnLip_b(X)$ and that it satisfies the refined Bakry-Emery
condition \eqref{key_point}, where we neglect the last negative term:
\beq\label{da_elevare} 
R_{K}(t)\abs{\tD \sfP_t\zeta_s}^2\leq \sfP_t\zeta_s^2 \qquad\textnormal{in }X.
\eeq
Since $p\in (1,2]$ , the conjugate $q$ is in $[2,+\infty)$ and hence $q/2\geq 1$. Taking the power $q/2$ in \eqref{da_elevare} and using Jensen's inequality we obtain \[ 
(R_{K}(t))^{\frac{q}{2}}\abs{\tD \sfP_t\zeta_s}^q\leq
\big(\sfP_t(\zeta_s)\big)^{q/2}\le \sfP_t(\zeta_s^q).
\]
The combination of this inequality and \eqref{sost} yields \[ 
\partial_s\sfP_t\zeta_s+\frac{p^{q-1}q(R_{K}(t))^{\frac{q}{2}}}{q-1}\frac{\abs{\tD \sfP_t\zeta_s}^q}{p^{q-1}q}=\partial_s\sfP_t\zeta_s+p^q(R_{K}(t))^{\frac{q}{2}}\frac{\abs{\tD \sfP_t\zeta_s}^q}{p^{q-1}q}\leq 0 
\]
which shows that $\tilde\zeta_s:=\sfP_t\zeta_s$
is a subsolution of the Hamilton-Jacobi equation as in \eqref{eq:49}
with the time-and-curvature dependent weight
\begin{equation}
a(t):=\Big(p^q(R_{K}(t))^{\frac{q}{2}}\Big)^{1/{q-1}}=
p^p (R_K(t))^{p/2}.\label{eq:50}
\end{equation}
All these facts lead to \begin{align*} \displaystyle
&\int_X\zeta_1\,\dd(\sfP^*_t\mu_1)-\int_X\zeta_0\,\dd(\sfP^*_t\mu_0)
                          =\int_X\sfP_t\zeta_1\,\dd\mu_1-\int_X\sfP_t\zeta_0\,\dd\mu_0\leq
                          \frac1{a(t)}
                          \sfW_p^p(\mu_0,\mu_1).
\end{align*}
Thus, taking the supremum over all the subsolutions to $\partial_s\zeta_s+\frac{1}{q-1}\zeta_s^q\leq 0$ we conclude \[ 
\hed_p^p(\sfP_t^*\mu_0,\sfP^*_t\mu_1)\leq \frac{1}{a(t)}\sfW_p^p(\mu_0,\mu_1)
\]
where $a(t)$ as in \eqref{eq:50},
which yields \eqref{reg_1}.
\end{proof}
As a byproduct, when $K\ge0$, we obtain an precise decay rate for
the asymptotic behaviour of $\sfP_t^*$.
\begin{corollary}
  \label{cor:asymptotic}
  Let $(X,\mathsf{d},\gm)$ be a $\RCD(K,\infty)$ metric measure space
  with $K\ge0$ and let $\gm\in \calP_p(X)$, $p\in [1,2]$.
  For every $\mu_0\in \calP_p(X)$ we have
  \begin{equation}
    \label{eq:84}
    \hed_p(\sfP_t^*\mu_0,\gm)\le
    \frac1{p(R_K(t))^{1/2}}\sfW_p(\mu_0,\gm)\quad
    \text{for every }t>0.
  \end{equation}
\end{corollary}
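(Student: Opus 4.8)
The plan is to read off \eqref{eq:84} from the regularization estimate \eqref{reg_1} of Theorem \ref{reg_1_th} by taking $\mu_1=\gm$ and using that $\gm$ is a fixed point of the adjoint heat flow. Concretely, once we know that $\sfP_t^*\gm=\gm$ for every $t>0$, an application of \eqref{reg_1} to the pair $(\mu_0,\gm)$ yields
\[
  \hed_p(\sfP_t^*\mu_0,\gm)=\hed_p(\sfP_t^*\mu_0,\sfP_t^*\gm)\le \frac{1}{p\,(R_K(t))^{1/2}}\sfW_p(\mu_0,\gm),
\]
which is precisely \eqref{eq:84}. Since $\gm\in\calP_p(X)$ and $\mu_0\in\calP_p(X)$, the quantity $\sfW_p(\mu_0,\gm)$ is finite, so the right-hand side is well defined.

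The only nontrivial ingredient is therefore the invariance $\sfP_t^*\gm=\gm$. First I would recall that on an $\RCD(K,\infty)$ space the Cheeger energy is quadratic, so that $(\sfP_t)_{t\ge0}$ is a linear, self-adjoint Markov semigroup on $L^2(X,\gm)$. Then, using the dual identity \eqref{eq:51} with $\mu=\gm$, for every $f\in\mathrm C_b(X)$ I would write
\[
  \int_X f\,\dd(\sfP_t^*\gm)=\int_X \sfP_t f\,\dd\gm=\int_X f\,(\sfP_t 1)\,\dd\gm,
\]
the last step being self-adjointness of $\sfP_t$ in $L^2(X,\gm)$. It then remains to invoke the conservativeness $\sfP_t 1=1$, i.e.\ the mass-preservation property already used in the Introduction, which holds in this setting because the volume growth condition \eqref{eq:80} guarantees stochastic completeness of the heat flow. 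This gives $\int_X f\,\dd(\sfP_t^*\gm)=\int_X f\,\dd\gm$ for all $f\in\mathrm C_b(X)$, and since bounded continuous functions determine finite Borel measures on the Polish space $X$, we conclude $\sfP_t^*\gm=\gm$.

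The main obstacle, such as it is, is this invariance step; everything else is a direct substitution into Theorem \ref{reg_1_th}. I expect no real difficulty, since stationarity of $\gm$ is equivalent to the preservation of constants $\sfP_t 1=1$, and the latter is a standard consequence of \eqref{eq:80}. As an alternative route that avoids quoting stochastic completeness, one may observe that the constant function is the minimizer of $\mathsf{Ch}$ among functions of prescribed integral, so that constants are stationary for the gradient flow of $\mathsf{Ch}$, again giving $\sfP_t 1=1$ and hence $\sfP_t^*\gm=\gm$.

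Finally, I would remark that the hypothesis $K\ge0$ enters only through the asymptotics: by \eqref{eq:66} one has $R_K(t)\to+\infty$ as $t\to\infty$ (with exponential rate when $K>0$), so that \eqref{eq:84} forces $\hed_p(\sfP_t^*\mu_0,\gm)\to0$, i.e.\ convergence of $\sfP_t^*\mu_0$ to equilibrium in the Hellinger distance.
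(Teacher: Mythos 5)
Your proof is correct and follows exactly the route the paper intends: Corollary \ref{cor:asymptotic} is stated as an immediate byproduct of Theorem \ref{reg_1_th}, obtained by taking $\mu_1=\gm$ in \eqref{reg_1} and using the invariance $\sfP_t^*\gm=\gm$ (which, as you note, follows from self-adjointness of $\sfP_t$ together with $\sfP_t 1=1$, the latter being immediate here since $\gm$ is a probability measure and constants are equilibria of the gradient flow of $\mathsf{Ch}$). Your closing remark on the role of $K\ge 0$ also matches the paper's own discussion in the Introduction.
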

In the case $p=2$ and $K>0$ it is interesting to compare \eqref{eq:84}
with
the well known exponential decay rates of the logarithmic entropy
and of the Wasserstein distance
\begin{equation}
  \label{eq:28}
  \KL(\sfP_t^*\mu_0|\gm)\le \mathrm e^{-2K t}\KL(\mu_0|\gm),\quad
  \sfW_2(\sfP_t^*\mu_0,\gm)\le \mathrm e^{-K t}\sfW_2(\mu_0,\gm)
\end{equation}
which follow by the $K$-geodesic convexity of the $\KL$ functional
in $\mathsf{CD}(K,\infty)$ spaces.
In particular, recalling \eqref{eq:83},
the first estimate of \eqref{eq:28} provides
\begin{displaymath}
  \hed_2(\sfP_t^*\mu_0|\gm)\le \mathrm e^{-K t}\KL(\mu_0|\gm)
\end{displaymath}
which exhibits the same exponential behaviour of \eqref{eq:84};
however, \eqref{eq:84} only requires $\mu_0\in \cP_2(X)$.
 \subsection{{Regularization $\mathsf{He}_2$-$\mathsf{HK}$}}
 \noindent With a similar argument we prove that the Hellinger
 distance at time $t$ can be estimated from above by the weighted
 Hellinger-Kantorovich distance $\HK_\alpha$,
 in which the parameter $\alpha$ acts on the
 transport part of the distance with a time-dependent factor and
 does not affect the reaction part.
 Note that this embodies a natural combination of the Hellinger-Kantorovich estimate above and the Hellinger contraction that we proved in Proposition \ref{contr_Hell_thm}.
 \begin{theorem}\label{cor_stima}
   Let $(X,\mathsf{d},\gm)$ be a $\RCD(K,\infty)$ metric measure space.
   For every
   $\mu_0,\mu_1\in\calM(X)$
   \begin{equation}\label{stima_imp_HK2}
     \hed_2(\sfP_{t}^{\ast}\mu_0,\sfP_{t}^{\ast}\mu_1)\leq
     \HK_{\alpha(t)}(\mu_0,\mu_1),
     \qquad
     \text{$\alpha(t)=4R_K(t)$ as defined in \eqref{eq:66}}.
 \end{equation}
\end{theorem}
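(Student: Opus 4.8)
The plan is to follow the dual dynamic strategy already employed in the proof of Theorem~\ref{reg_1_th}, exploiting the fact that $\hed_2$ and $\HK_\alpha$ carry the \emph{same} reaction term $\zeta_s^2$ in their dual Hamilton--Jacobi formulations \eqref{hell_dyn_dual} (with $p=q=2$) and \eqref{HK_weight}. The essential difference from Theorem~\ref{reg_1_th} is that the key estimate \eqref{key_point} must now be used \emph{in full}: there the negative term $-(\sfP_t f)^2$ was discarded, whereas here it is precisely this term that reproduces the reaction part of the $\HK$ equation. First I would fix $t>0$, set $\alpha(t):=4R_K(t)$ so that $\alpha(t)/4=R_K(t)$, and take an arbitrary subsolution $\zeta\in \rmC^1([0,1],\mathrm B_b(X))$ of
\[
  \partial_s\zeta_s+\zeta_s^2\le 0\quad\text{in }[0,1]\times X,
\]
which is the admissible class for $\hed_2$ in \eqref{hell_dyn_dual}.

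Next I would apply the heat semigroup to $(\zeta_s)_s$ and verify that $\tilde\zeta_s:=\sfP_t\zeta_s$ is an admissible competitor for the $\HK_{\alpha(t)}$ dual problem \eqref{HK_weight}. By Theorem~\ref{thm:cond_equivalenti}(1) each $\sfP_t\zeta_s$ has a representative in $\tnLip_b(X)$, and since $\sfP_t:\mathrm B_b(X)\to\tnLip_b(X)$ is linear and bounded (by the sup-norm contraction together with \eqref{eq:82}), the curve $s\mapsto\sfP_t\zeta_s$ inherits the $\rmC^1$ regularity of $\zeta$, now with values in $\tnLip_b(X)$, and $\partial_s\sfP_t\zeta_s=\sfP_t\partial_s\zeta_s$. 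Since $\sfP_t$ is linear and order preserving, applying it to the differential inequality above yields
\[
  \partial_s\sfP_t\zeta_s+\sfP_t(\zeta_s^2)\le 0\quad\text{in }[0,1]\times X.
\]
Invoking \eqref{key_point} with $f=\zeta_s$ and $\alpha(t)/4=R_K(t)$ I would then estimate
\[
  \frac{\alpha(t)}4\,|\tD\sfP_t\zeta_s|^2+(\sfP_t\zeta_s)^2
  =R_K(t)\,|\tD\sfP_t\zeta_s|^2+(\sfP_t\zeta_s)^2
  \le \sfP_t(\zeta_s^2)\quad\text{in }X,
\]
so that, combining the last two displays,
\[
  \partial_s\sfP_t\zeta_s+\frac{\alpha(t)}4\,|\tD\sfP_t\zeta_s|^2+(\sfP_t\zeta_s)^2\le 0,
\]
i.e.\ $\tilde\zeta=\sfP_t\zeta$ is a subsolution of the Hamilton--Jacobi equation in \eqref{HK_weight} with parameter $\alpha(t)$.

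I would then transfer this to the adjoint semigroup through the duality \eqref{eq:51}, writing
\[
  \int_X\zeta_1\,\dd(\sfP_t^*\mu_1)-\int_X\zeta_0\,\dd(\sfP_t^*\mu_0)
  =\int_X\sfP_t\zeta_1\,\dd\mu_1-\int_X\sfP_t\zeta_0\,\dd\mu_0
  \le \HK_{\alpha(t)}^2(\mu_0,\mu_1),
\]
where the inequality is \eqref{HK_weight} applied to the admissible competitor $\tilde\zeta$. Taking the supremum of the left-hand side over all subsolutions $\zeta$ of $\partial_s\zeta_s+\zeta_s^2\le 0$ and using the dual representation \eqref{hell_dyn_dual} for $\hed_2$ evaluated at the pair $(\sfP_t^*\mu_0,\sfP_t^*\mu_1)$ gives $\hed_2^2(\sfP_t^*\mu_0,\sfP_t^*\mu_1)\le \HK_{\alpha(t)}^2(\mu_0,\mu_1)$, whence \eqref{stima_imp_HK2} follows after taking square roots.

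The main obstacle I anticipate is not the short algebraic chain but the justification of the regularity and the \emph{pointwise} (rather than merely $\gm$-a.e.) validity of the inequalities. Both \eqref{key_point} and \eqref{eq:51} hold for the genuinely continuous representatives furnished by Theorem~\ref{thm:cond_equivalenti}(1), and the admissible class in \eqref{HK_weight} requires the subsolution inequality to hold everywhere in $X$; I would therefore work throughout with these continuous representatives, noting that order preservation of $\sfP_t$ then propagates $\partial_s\zeta_s+\zeta_s^2\le 0$ everywhere. A secondary point to verify is the $\rmC^1([0,1],\tnLip_b(X))$ regularity of $s\mapsto\sfP_t\zeta_s$, which follows from the boundedness of $\sfP_t$ as a map $\mathrm B_b(X)\to\tnLip_b(X)$ and the continuity in $s$ of $\zeta_s$ and $\partial_s\zeta_s$ in $\mathrm B_b(X)$.
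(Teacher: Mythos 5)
Your proof is correct and follows essentially the same route as the paper: apply $\sfP_t$ to a dual subsolution for $\hed_2$, use the refined estimate \eqref{key_point} \emph{in full} (keeping the $-(\sfP_t\zeta_s)^2$ term) to produce an admissible subsolution for the $\HK_{\alpha(t)}$ dual problem \eqref{HK_weight}, and conclude by duality \eqref{eq:51} and taking suprema. Your additional attention to the $\rmC^1([0,1],\tnLip_b(X))$ regularity of $s\mapsto\sfP_t\zeta_s$ and to the pointwise validity of the inequalities is a welcome refinement of details the paper leaves implicit, but it does not change the argument.
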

\begin{proof}
As in the previous proof, we set
$\mathscr{C}^1(\mathrm B_b):=\textnormal{C}^1([0,1],\mathrm
B_b(\Omega))$ and
$\mathscr{C}^1(\textnormal{Lip}_b):=\textnormal{C}^1([0,1],\textnormal{Lip}_b(\Omega))$
and 
we recall that
\[ \hed_2^{2}(\mu_0,\mu_1)=\sup\big\{ \int_{X}\zeta_1\,\dd\mu_1-\int_{X}\zeta_0\,\dd\mu_0:\text{ }\zeta\in\mathscr{C}^1(\mathrm B_b),\quad\partial_s\zeta_s+\zeta_s^2\leq 0\big\}.\]
and that
\eqref{HK_weight}
\begin{equation}
 \begin{aligned}
   \HK_{\alpha}^2(\mu_0,\mu_1)= \sup\Big\{&
   \int_{X} \zeta_1\,\dd\mu_1 -\int_{X} \zeta_0\,\dd\mu_0\,:\text{ }\zeta\in \mathscr{C}^1(\textnormal{Lip}_b), \\
 &\partial_s\zeta_s(x)+\frac{\alpha}{4}\vert D_X
 \zeta_s\vert^2(x)+\zeta_s^2\leq 0\Big\}\end{aligned}\label{eq:53}
\end{equation}
We consider a solution $\zeta\in\mathscr{C}^1(\mathrm B_b)$
of $\partial_s\zeta_s+\zeta_s^2\leq 0$ and
we apply the linear operator $\sfP_t$, $t>0$, obtaining \[ 
\partial_s\sfP_t\zeta_s+\sfP_{t}(\zeta_s)^2\leq 0.
\]
Theorem \ref{thm:cond_equivalenti} ensures that $\sfP_t\zeta_s$ is Lipschitz and
satisfies
\[ 
R_{K}(t)\abs{ \tD \sfP_{t}\zeta_s}^2+(\sfP_t \zeta_s)^2\leq \sfP_{t}(\zeta_s^2)
\]
so that  \[ 
\partial_s \sfP_t\zeta_s+R_{K}(t)\abs{\tD \sfP_{t}\zeta_s}^2+(\sfP_t\zeta_s)^2\leq 0;
\]
this inequality corresponds to the subsolutions of Hamilton-Jacobi equation
in \eqref{eq:53} weighted with $\alpha=4R_{K}(t)=\alpha(t)$.
Therefore \[ 
\int_{X}\zeta_1\,\dd(\sfP^{\ast}_{t}\mu_1)-\int_{X}\zeta_0\,\dd(\sfP_{t}^{\ast}\mu_0)=\int_{X}\sfP_{t}\zeta_1\,\dd\mu_1-\int_{X}\sfP_t\zeta_0\,\dd\mu_0\leq \HK^{2}_{\alpha(t)}(\mu_0,\mu_1),
\]
and taking the supremum with respect to the subsolutions to
$\partial_s\zeta_s+\zeta_s^2\leq 0$ we get
\eqref{stima_imp_HK2}.
\end{proof}
It is worth noticing that 
\eqref{stima_imp_HK2}
yields the pure Hellinger contraction estimate \eqref{eq:42}
thanks to \eqref{eq:67}.
  Similarly, choosing $\mu_0,\mu_1\in \calP_2(X)$ and
  applying \eqref{eq:79} one recovers \eqref{reg_1} in the case $p=2$.
\bibliography{refe}{}
\bibliographystyle{abbrv}
\end{document}